\documentclass[12pt]{amsart}
\usepackage[margin=1.25in]{geometry}
\usepackage{latexsym}
\usepackage{amsmath}
\usepackage{amssymb}
\usepackage{amsthm}
\usepackage{stmaryrd}
\usepackage{amscd}
\usepackage{bm}
\usepackage{enumerate}
\usepackage{enumitem}
\usepackage{amssymb} 
\usepackage{mathrsfs}
\usepackage[all]{xy}
\usepackage{color}
\usepackage{graphicx}
\usepackage{mathtools}
\usepackage{comment}
\usepackage{url}
\usepackage[pagebackref]{hyperref}

\makeatletter
  \renewcommand{\theequation}{\arabic{section}.\arabic{thm}.\arabic{equation}}
  \@addtoreset{equation}{thm}
  \makeatother

\title{BCM-regularity of diagonal hypersurfaces and plus-pure thresholds in mixed characteristic}

\author{Tatsuki Yamaguchi}
\address{Department of Mathematics, School of Science, Institute of Science Tokyo, 2-12-1 Ookayama, Meguro, Tokyo 152-8551}
\email{yamaguchi.t.bp@m.titech.ac.jp}

\subjclass[2020]{}

\def\ge{\geqslant}
\def\le{\leqslant}
\def\phi{\varphi}
\def\epsilon{\varepsilon}
\def\tilde{\widetilde}

\def\mapsto{\longmapsto}

\def\Hom{\operatorname{Hom}}

\def\max{\operatorname{max}}

\def\ppt{\operatorname{ppt}}

\def\m{{\mathfrak m}}
\def\n{{\mathfrak n}}
\def\ba{{\mathfrak a}}

\newcommand{\N}{\mathbb{N}}
\newcommand{\Q}{\mathbb{Q}} 
\newcommand{\C}{\mathbb{C}} 
\newcommand{\R}{\mathbb{R}} 
\newcommand{\Z}{\mathbb{Z}}

\theoremstyle{plain}
\newtheorem{thm}{Theorem}[section] 
\newtheorem{cor}[thm]{Corollary}
\newtheorem{prop}[thm]{Proposition}

\newtheorem{lem}[thm]{Lemma}
\theoremstyle{definition} 
\newtheorem{defn}[thm]{Definition}
\newtheorem{eg}[thm]{Example}
\theoremstyle{remark}
\newtheorem{rem}[thm]{Remark}
\newtheorem{ques}[thm]{Question}
\newtheorem{setting}[thm]{Setting}
\newtheorem{notation}[thm]{Notation} 
\newtheorem*{cl}{Claim}
\newtheorem*{clproof}{Proof of Claim}

\newtheorem*{acknowledgement}{Acknowledgments}

\begin{document}

\tolerance = 9999

\begin{abstract}
We introduce a new method for computing plus-pure thresholds, a mixed-characteristic analogue of both log canonical thresholds and $F$-pure thresholds. We obtain some necessary conditions and some sufficient conditions for BCM-regularity of Fermat-type hypersurfaces. We also establish lower bounds for plus-pure thresholds of diagonal hypersurfaces in mixed characteristic. Furthermore, we give bounds for plus-pure thresholds of hypersurfaces in mixed characteristic $(0,2)$ using splitting-order sequences, introduced by Yoshikawa. As an application, we classify BCM-regular diagonal hypersurfaces in mixed characteristic $(0,2)$.
\end{abstract}

\maketitle
\markboth{TATSUKI YAMAGUCHI}{BCM-REGULARITY OF HYPERSURFACES AND PLUS-PURE THRESHOLDS}

\section{Introduction}
The log canonical threshold is an important invariant in birational geometry, defined in terms of resolutions of singularities. It plays a significant role in theories such as the minimal model program (\cite{KM98}, \cite{LazarsfeldII}) and the theory of normalized volumes (\cite{Liu18}).

In positive characteristic, the Frobenius morphism provides a natural framework for defining useful classes of singularities, such as $F$-pure and strongly $F$-regular singularities (\cite{Fedder83}, \cite{Hochster-Huneke90}, \cite{Hochster-Huneke94}). The $F$-pure threshold, introduced by Takagi and Watanabe \cite{Takagi-Watanabe04}, is an invariant associated to a pair $(R,\ba)$, where $R$ is a Noetherian ring of positive characteristic and $\ba\subseteq R$ is a nonzero ideal, defined as the critical value $t$ such that the pair $(R,\ba^t)$ is $F$-pure. The $F$-pure threshold is regarded as a positive-characteristic analogue of the log canonical threshold.

 On the other hand, in mixed characteristic, perfectoid techniques introduced by Scholze \cite{Scholze} have played a central role in the development of singularity theory. Ma and Schwede \cite{Ma-Schwede21} introduced the notion of BCM-regularity as a mixed-characteristic analogue of strong $F$-regularity, while Bhatt, Ma, Patakfalvi, Schwede, Tucker, Waldron and Witaszek \cite{BMPSTWW24} introduced the notion of perfectoid purity as a mixed-characteristic counterpart of $F$-purity. Algebraic geometry in mixed characteristic has been actively studied using these techniques (see, e.g., \cite{BMPSTWW23}, \cite{Takamatsu-Yoshikawa23}, \cite{HLS24}).

  The plus-pure threshold, coined in \cite{CPQGST25}, is an invariant defined as the first jumping number of a variant of mixed-characteristic test ideals. Specifically, if $(R,\m)$ is a regular local ring of residue characteristic $p>0$ and $f\in \m$ is a nonzero element, then the plus-pure threshold $\operatorname{ppt} (f)$ of $f$ is defined as
\[
	\sup\{t\in \Q_{\ge 0}\mid \text{$R\xrightarrow{f^t} R^+$ is pure}\},
\]
where $R^+$ denotes the absolute integral closure of $R$. The invariant can be viewed as a special case of BCM-thresholds introduced by Rodr\'{i}guez-Villalobos \cite{Rodriguez-Villalobos25}. At present, explicit computations of plus-pure thresholds remain limited (see \cite{CPQGST25} and \cite{BJPRMSS25} for recent progress). The aim of this paper is to provide further examples of such computations.

	For $F$-pure thresholds, Fedder's criterion (\cite{Fedder83}) is the main tool for computation. Recently, Yoshikawa \cite{Yoshikawa25} extended the notion of quasi-$F$-split singularities to the mixed-characteristic setting and showed that such singularities are perfectoid pure. He also introduced splitting-order sequences and investigated perfectoid pure singularities using these sequences (\cite{Yoshikawa25b}). In that work, he provided an algorithm to determine whether a hypersurface in mixed characteristic $(0,2)$ is perfectoid pure. In this paper, we also study the relationship between splitting-order sequences and plus-pure thresholds in mixed characteristic $(0,2)$.

 First, we focus on Fermat-type hypersurfaces as toy examples. Even in this relatively simple setting, characterizing BCM-regularity is challenging. We give necessary conditions as well as sufficient conditions, which is enough for determining the BCM-regularity in the cases $p=2,3$.
\begin{thm}[Theorem \ref{thm Fermat type}]
	Let $p$ be a prime number, $n\ge2$, $d$ be positive integers and $R:=\Z_p\llbracket x_0,\dots, x_n\rrbracket/(x_0^d+\dots+x_n^d)$.
\begin{enumerate}
	\item $R$ is BCM-regular if one of the following conditions holds.
		\begin{enumerate}
			\item $d\le \min\{n,p\}$.
			\item $d\le n$ and there exists a positive integer $a$ such that $1\le a \le p-1$ and one of the following holds:
			\begin{enumerate}
				\item $n=\lfloor (p^2-1)/a\rfloor.$
				\item $an=p^2-p$.
				\item $an=p^2-p-1$.
				\item $an=p^2-a-1$.
			\end{enumerate}
		\end{enumerate}
	\item $R$ is not $+$-regular if one of the following conditions holds.
		\begin{enumerate}
			\item $d>n$.
			\item $d\ge p^2$.
			\item There exists a positive integer such that $1\le a \le p-1$ and
			\[
				\lceil \frac{p^2}{a+1} \rceil \le d \le n \le \lfloor \frac{p^2-p-1}{a}\rfloor-1. 
			\]
		\end{enumerate}
\end{enumerate}
\end{thm}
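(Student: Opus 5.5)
The approach is to convert both halves of the statement into statements about the plus-pure threshold of $f:=x_0^d+\dots+x_n^d$ in $S:=\Z_p\llbracket x_0,\dots,x_n\rrbracket$, and then to sharpen these slightly. I would use adjunction for BCM-regularity and $+$-regularity (Ma–Schwede and the BMPSTWW inversion of adjunction). $R=S/(f)$ is BCM-regular as soon as the pair $(S,f^{1-\epsilon})$ is BCM-regular near the divisor. Conversely, $+$-regularity of $R$ forces $S\xrightarrow{f^{1-\epsilon}c}S^+$ to be pure for suitable test elements $c$. So part (1) becomes the task of producing splittings of $S\to S^+$ that send $1\mapsto f^{t}\cdot(\text{small element})$ with $t$ arbitrarily close to $1$. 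Part (2) becomes the task of exhibiting an element of $S^+$ that obstructs all such splittings.

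For part (2)(a) I would localize at $p$. Since $R[1/p]$ contains the affine cone over the degree $d$ Fermat hypersurface in $\mathbb{P}^n_{\Q_p}$, $+$-regularity (even BCM-regularity) of $R$ forces this cone to be klt. That fails exactly when $d>n$, because the anticanonical degree $n+1-d$ is then $\le 0$.

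For (2)(b) and (2)(c) I would bound $\ppt$ from above using $p$-th roots of the coefficients in $\Z_p$-expansions. In $W_2$-style arithmetic one has
\[
f^{p}\equiv \sum_i x_i^{dp}+p\,\delta(f)\pmod{p^2},
\]
and by iterating one can write $f^{1-1/p^2}$ times a socle representative as an element of $(x_0,\dots,x_n,p)S^+$. This requires checking that in every monomial of the relevant expansion, some exponent $\ge p^2$ (when $d\ge p^2$), or the degree bound $n\le\lfloor(p^2-p-1)/a\rfloor-1$ together with $d\ge\lceil p^2/(a+1)\rceil$ forces such a monomial to land in $\m^{[p^2]}$-type ideals. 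That in turn shows $\ppt(f)<1$ in a way incompatible with $+$-regularity.

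For part (1) I would use the splitting-order / quasi-$F$-splitting machinery (Yoshikawa), which shows purity of $S\xrightarrow{f^t}S^+$ via a Fedder-type criterion at level $1$ or $2$. Level $1$ gives (1)(a). When $d\le\min\{n,p\}$, the monomial $(x_0\cdots x_n)^{p-1}$ appears in $f^{p-1}$ together with enough room in the $\delta$-correction to get a splitting of $f^{1-1/p^e}c$ for all $e$, and hence BCM-regularity. Level $2$ (Witt vectors of length $2$, i.e. the coefficient of $p$ in $f^{p}$ and $f^{ap}$) gives (1)(b). There I choose $a$ so that $f^{a}$, respectively $\delta(f)^{a}$, contains a monomial whose exponents exactly fill $(x_0\cdots x_n)^{p^2-1}$ up to multiplication by $f^{p^2-1-a\cdot(\text{stuff})}$. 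The four arithmetic conditions $n=\lfloor (p^2-1)/a\rfloor$, $an=p^2-p$, $an=p^2-p-1$, $an=p^2-a-1$ are exactly the cases where the multinomial count makes this monomial appear with a coefficient that is a unit mod $p$.

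The main obstacle I expect is this level-$2$ coefficient check. I must verify that the relevant multinomial coefficient in $\delta(f^{a})$, or in the mixed term of $f^{p}$, is prime to $p$ precisely under those numerical conditions, and that no cancellation occurs between contributions. I would handle it with Lucas-type theorems applied to base-$p$ digits of the exponents.
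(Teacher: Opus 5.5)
Your proposal has genuine gaps: the reduction you use is wrong as stated, and the part (1) strategy fails in cases the theorem covers.

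\textbf{The missing reduction.} The whole argument rests on Proposition \ref{BCM-regularity and brt}. Single out $x_0$ and view $R$ as the degree-$d$ cyclic cover of $S':=\Z_p\llbracket x_1,\dots,x_n\rrbracket$ obtained by adjoining $(-f')^{1/d}$, where $f':=x_1^d+\dots+x_n^d$. Then $R^+\cong S'^+$, and the socle of the top local cohomology of $R$ is spanned by $[x_0^{d-1}/(px_1\cdots x_n)]$. Hence $R$ is BCM-regular iff $f'^{(d-1)/d}\notin(p,x_1,\dots,x_n)B$ for every perfectoid BCM $S'^+$-algebra $B$, and $+$-regular iff $f'^{(d-1)/d}\notin(p,x_1,\dots,x_n)S'^+$. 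Proposition \ref{proposition index inequality} then lets one compare exponents, e.g.\ replace $x_0^d$ by $x_0^p$.

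Your adjunction substitute is false as stated. BCM-regularity of $(S,f^{1-\epsilon})$ for all $\epsilon>0$ is equivalent to $\ppt(S,f)=1$, i.e.\ to perfectoid purity of $R$. That does not imply BCM-regularity: by Corollary \ref{cor classification}, $\Z_2\llbracket x_0,x_1,x_2\rrbracket/(x_0^3+x_1^3+x_2^3)$ is perfectoid pure but not $+$-regular. The ``small element'' variant is never pinned down.

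Dually, in (2)(b),(c) you aim at $\ppt(S,f)<1$ in $n+1$ variables, i.e.\ non-perfectoid-purity, which is more than needed. It does hold in (2)(b), since exponents $\ge p^2$ force $f^{(p-1)/p}\in(p,x_0,\dots,x_n)S^+$. For (2)(c) you give no argument. What the computation actually yields there is $f'^{(p^2-a-1)/p^2}\in(p,x_1,\dots,x_n)S'^+$ with $(p^2-a-1)/p^2\le (d-1)/d$, which is conclusive only through the cover. Also, the correct identity is $f^p=\phi(f)-p\delta(f)$.

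\textbf{Part (1).} Quasi-$F$-splitting yields perfectoid purity, not BCM-regularity. Level-one (Fedder-type) input cannot cover (1)(a): when $d=p\le n$, which (1)(a) allows, $f\equiv(x_0+\dots+x_n)^p \pmod{p}$, so $R/pR$ is not even reduced. Moreover, $(x_0\cdots x_n)^{p-1}$ occurs in $f^{p-1}$ only if $d=n+1$, which is excluded.

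The paper instead expands $f'^{1/p}$ in $S'^+$ (Lemma \ref{p-root formula}). With $d=n$ and $p-1=nQ+r$, one gets $f'^{(p-1)/p}\equiv p^{r/p}\binom{p-1}{r}(\phi^{-1}(f'))^{p-r-1}(\phi^{-2}(\delta(f')))^{r}$ modulo $(p^{r/p+1/p^2},x_1,\dots,x_n)S'^+$. So the decisive input is the $\delta$-term condition $f'^{p(p-r-1)}\delta(f')^r\notin(p,x_1^{p^2},\dots,x_n^{p^2})$.

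For (1)(b) the main difficulty is not a multinomial count but the non-polynomial remainders in $f'^{1/p^e}$. The paper needs Proposition \ref{remark p-root formula} and Theorem \ref{p^e-root formula}, where the BCM property of the $p$-adic completion of $S'^+$ makes the remainders divisible by powers of $\phi^{-2}(f')$. It also needs the refined Lemma \ref{refined p-root formula}. The arguments then go beyond level two, to exponents $(p^3-p-2)/p^3$ and $(p^4-p^2-p-1)/p^4$, and use identities such as $\sum_i\binom{p}{i}\binom{p-a}{i-1}=\binom{2p-a}{p-1}\in p\Z\setminus p^2\Z$. Without this control of remainders you cannot conclude non-membership in $(p,x_1,\dots,x_n)B$ for an arbitrary perfectoid BCM algebra $B$.

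The four numerical conditions are only sufficient, not ``exactly'' the good cases: for $p=5$, $d=n=9$ is BCM-regular by a separate argument.

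Your treatment of (2)(a), via klt-ness of the characteristic-zero Fermat cone, agrees with the paper.
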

\begin{eg}[Examples \ref{eg p=3} and \ref{eg p=5}]
Let the notation be as in the above theorem.
\begin{enumerate}
\item Suppose that $p=2,3$. $R$ is BCM-regular if and only if $d\le \min\{n, p^2-1\}$.
\item Suppose that $p=5$ and $(d,n)\neq (21,21), (21,22),(22,22)$. Then $R$ is BCM-regular if and only if $d$ and $n$ satisfy one of the following conditions:
\begin{enumerate}
	\item $d\le \min\{12,n\}$.
	\item $d\le \min\{24,n\}$ and $19\le n$.
\end{enumerate}
\end{enumerate}
\end{eg}

Next, we turn to the computation of plus-pure thresholds. We establish a lower bound for the plus-pure thresholds of diagonal hypersurfaces in mixed characteristic, analogous to Hern\'{a}ndez's result on $F$-pure thresholds of diagonal hypersurfaces in positive characteristic (\cite{Hernandez15}).

\begin{thm}[{Theorem \ref{diagonal hypersurface brt}}]
	Let $p$ be a prime number, $n, d_1,\dots,d_n\ge 2$ be positive integers, $S:=\Z_p\llbracket x_1,\dots,x_n \rrbracket$ and $f:=x_1^{d_1}+\dots+x_n^{d_n}$. Suppose that there exist non-negative rational numbers $\alpha, \alpha_1,\dots, \alpha_n$ such that $\alpha=\alpha_1+\dots+\alpha_n\le 1$ and $d_i\alpha_i \le 1$ for $i=1,\dots,n$. Moreover, assume that there is at most one carry at  each digit when adding the $p$-adic expansions of $\alpha_1, \dots, \alpha_n$ as in Notation \ref{notation p-adic expansion}. Then $\operatorname{ppt}(f)\ge \alpha$.
\end{thm}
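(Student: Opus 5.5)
We need to show that $R\xrightarrow{f^t} R^+$ is pure for all rational $t<\alpha$ (and even for $t=\alpha$ if possible), where $R=S$. Since $S$ is regular local, purity of $S\to S^+$, $1\mapsto f^t$, is equivalent to $f^t\notin \m S^+$ after reduction: a map $S\to B$ from a complete regular local ring to a big Cohen--Macaulay algebra sends $1\mapsto b$ purely if and only if $b\cdot(p^{N},x_1^{N},\dots,x_n^{N})$-socle representative is nonzero. Concretely, with $\eta:=(p x_1\cdots x_n)^{N-1}$ spanning the socle of $S/(p^N,x_1^N,\dots,x_n^N)$, purity means
\[
	f^t\,\eta\notin (p^N,x_1^N,\dots,x_n^N)S^+ \quad\text{for } N\gg0 .
\]
The plan is to expand $f^t$ into monomials and isolate a single nonvanishing term, imitating Hernández's argument for $F$-pure thresholds in characteristic $p$.

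First we reduce to $t$ of the form $t=\alpha'=\sum_i\alpha_i'$, where $\alpha_i'\le\alpha_i$ are truncations of the $p$-adic expansions at level $e$. These have denominators $p^e$, keep $d_i\alpha_i'\le1$, and still have at most one carry per digit. Since ppt is a supremum, $\operatorname{ppt}(f)\ge\sup_e\alpha'=\alpha$ follows.

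To work with $f^{\alpha'}$ for $\alpha'=m/p^e$, we pass to a perfectoid cover. We adjoin compatible $p$-power roots of $p$ and of the $x_i$, and take $g:=x_1^{d_1/p^e}+\dots+x_n^{d_n/p^e}$. Then $g^{p^e}\equiv f$ modulo $p$, but not exactly. The key device I would use is the tilt/Frobenius-lift comparison: in $S^+/p$, the Frobenius is surjective and additive. Hence $f^{1/p^e}\equiv g \pmod{p^{1/p^e}}$ up to a unit-free error, and so
\[
	f^{m/p^e}\equiv g^m \pmod{p^{\epsilon}S^+}
\]
for some small $\epsilon>0$. Absorbing the error costs an arbitrarily small $\epsilon$ in the $p$-direction of the socle test. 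Because we test against $p^{N-1}$ with the $p$-power free to shrink via almost mathematics, this perturbation is harmless after replacing $N$ appropriately. This reduction is the step I expect to be the main obstacle: one has to make sure that the $p$-adic error term does not land in the ideal while still controlling its effect. I would handle it using the almost purity and $p$-adic-completeness results stated earlier in the paper, showing that $(p^N,x^N)S^+$ is closed under $p^{\epsilon}$-perturbations in the relevant sense.

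Next we expand $g^m$ by the multinomial theorem:
\[
	g^m=\sum_{k_1+\dots+k_n=m}\binom{m}{k_1,\dots,k_n}\prod_i x_i^{d_ik_i/p^e}.
\]
Take $k_i=p^e\alpha_i'$. The monomial then has exponents $d_i\alpha_i'\le1$, so after multiplying by $\eta$ it is not in $(x_i^N)$. By Lucas/Kummer, the $p$-adic valuation of the multinomial coefficient equals the number of carries when adding the $k_i$ in base $p$. The hypothesis of at most one carry per digit does not make this valuation zero. It does, however, make the coefficient a $p$-adic number whose valuation is bounded by $e$ and whose leading digit is controlled. The remaining work is twofold. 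First, this term, being a monomial with fractional exponents, is not cancelled by the other terms; this holds because distinct $(k_i)$ give distinct monomials in the valuation-graded structure of the perfectoid cover. Second, one compensates the $p$-power by using $p^{N-1}$ in $\eta$ together with a correction: a carry at a digit contributes exactly one factor $p$, which is exactly what one obtains from $x_i^{p\cdot p^{-j}}$ versus $p\cdot$ a lower digit. With one carry per digit, I would rescale the $\alpha_i$ digitwise so that each carry trades a factor of $p$ for a one-digit change, keeping the total weight below the socle threshold. I would verify this final bookkeeping carefully using Notation \ref{notation p-adic expansion}.
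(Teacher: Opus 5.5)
There is a genuine gap, and it sits exactly at the step you flagged. From $f^{1/p^e}\equiv\phi^{-e}(f)=g \pmod{p^{1/p^e}S^+}$ you only get $f^{m/p^e}=g^m+p^{1/p^e}h$ with $h\in S^+$ uncontrolled. This error cannot be absorbed. First, $(p,x_1,\dots,x_n)S^+$ is not stable under adding multiples of $p^{1/p^e}$; already $p^{1/p^e}\notin(p,x_1,\dots,x_n)S^+$. Second, almost mathematics only discards elements killed by every $p^{\epsilon}$, not an additive error that is merely divisible by $p^{1/p^e}$.

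What you can legitimately conclude is a weaker implication: if $g^m\notin(p^{1/p^e},x_1,\dots,x_n)S^+$, then $f^{m/p^e}\notin(p,x_1,\dots,x_n)S^+$. But modulo $p^{1/p^e}$ everything is a characteristic-$p$ computation. Since $g^m=\phi^{-e}(f^m)$, that hypothesis holds if and only if $\overline{f}^{\,m}\notin(x_1^{p^e},\dots,x_n^{p^e})\mathbb{F}_p\llbracket x_1,\dots,x_n\rrbracket$. So this route can never certify an exponent beyond the $F$-pure threshold of $\overline{f}$; at best it reproves Hern\'andez's carry-free bound.

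Your own multinomial step shows this. Once a single carry occurs, Kummer's theorem makes the coefficient of the target monomial divisible by $p$, so that term lies in $(p,x_1,\dots,x_n)S^+$. No choice of $\eta$ or digitwise rescaling can revive a term carrying a full factor of $p$. Concretely, for $f=x^2+y^3$ over $\Z_2$ the theorem gives $\ppt(f)\ge 5/6$ (cf.\ Corollary \ref{binomial ppt}), whereas $\operatorname{fpt}(x^2+y^3)=1/2$ in characteristic $2$.

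The missing idea is that in mixed characteristic a carry costs only a \emph{fractional} power of $p$, and this information lives precisely in the error term you discard. The paper keeps it through Lemma \ref{p-th root formula for ppt}, which refines your congruence to $f^{1/p^{s}}=\phi^{-s}(f)+p^{1/p^{s}}\phi^{-(s+1)}(\delta(f))-p^{1/p^{s}+1/p^{s+1}}(\phi^{-(s+1)}(f))^{p-1}\phi^{-(s+2)}(\delta(f))+\cdots$, up to remainders of controlled $p$-adic size. Since $\delta(f)=(\phi(f)-f^p)/p$, the coefficient of $x_1^{m_1d_1}\cdots x_n^{m_nd_n}$ in $\delta(f)$, with $\sum_i m_i=p$ and all $m_i<p$, is $-(p-1)!/(m_1!\cdots m_n!)$, which is a $p$-adic unit.

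Hence a carry created at digit $t_i$ and propagating up to the nonzero digit $s_i$ of $\alpha$ reproduces the carried monomial with a unit coefficient times $p^{1/p^{s_i}+\dots+1/p^{t_i-1}}$. The ranges $[s_i,t_i-1]$ are disjoint, so the accumulated $p$-exponent stays strictly below $\sum_{j\ge1}p^{-j}=1/(p-1)\le 1$. An induction over the nonzero digits of $\alpha$ then yields $f^{\langle\alpha\rangle_{s_m}}\notin(p,x_1,\dots,x_n)S^+$ for every $m$. The hypothesis of at most one carry per digit is what lets these first-order $\delta$-terms suffice. Without tracking this expansion of $f^{1/p^s}-\phi^{-s}(f)$, your argument cannot get past the characteristic-$p$ bound.
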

As an application, we obtain the following corollary, which gives an affirmative answer to \cite[Question 4.4]{CPQGST25}.
\begin{cor}[{Corollary \ref{binomial ppt}}]
Let $p$ be a prime number, $a, b\ge 2$ be positive integers, $S:=\Z_p\llbracket x,y \rrbracket$ and $f:=x^a+y^b$.
Then
\[
	\operatorname{ppt}(f)=\frac{1}{a}+\frac{1}{b}.
\]
\end{cor}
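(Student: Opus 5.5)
We prove the equality as two inequalities: a lower bound from Theorem \ref{diagonal hypersurface brt} and an upper bound from a comparison with the log canonical threshold.

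\textbf{Lower bound.} We apply Theorem \ref{diagonal hypersurface brt} with $n=2$, $d_1=a$, $d_2=b$, taking $\alpha_1=1/a$ and $\alpha_2=1/b$. Then $d_i\alpha_i=1$. If $1/a+1/b\le 1$, which holds whenever $a,b\ge 2$, the sum condition $\alpha\le 1$ is also satisfied.

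The remaining hypothesis is the carry condition on the $p$-adic expansions of $\alpha_1$ and $\alpha_2$ (Notation \ref{notation p-adic expansion}). With only two summands, a single digit position can receive at most one carry from the position before it. In the addition of two base-$p$ digit strings, the incoming carry is always $0$ or $1$. So the "at most one carry at each digit" condition should hold automatically for two summands. This gives $\operatorname{ppt}(f)\ge 1/a+1/b$.

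The main obstacle is checking that the notation's precise meaning of "carry" really is automatic for $n=2$. Two points need care.
\begin{itemize}
\item When $1/a$ or $1/b$ has a terminating expansion, one may have to use the non-terminating convention, e.g.\ $1/p=0.0(p-1)(p-1)\cdots$. I would handle this convention explicitly and confirm it does not create double carries.
\item If Notation \ref{notation p-adic expansion} counts carries differently, for instance requiring the digit sums to satisfy $\le p$ rather than $\le 2p-1$, I would instead choose $\alpha_1\le 1/a$ and $\alpha_2\le 1/b$ with truncated expansions approximating them. Taking a supremum, and using that the set of $t$ with $f^t$ pure is down-closed, then gives the same lower bound.
\end{itemize}

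\textbf{Upper bound.} I would use the known inequality $\operatorname{ppt}(f)\le \operatorname{lct}(f_{\mathbb{Q}})$ from \cite{CPQGST25}, or the general bound ppt $\le$ lct coming from the containment of BCM test ideals in multiplier ideals. The characteristic-zero log canonical threshold of $x^a+y^b$ is $\min\{1,1/a+1/b\}$. Hence $\operatorname{ppt}(f)\le 1/a+1/b$ whenever $1/a+1/b\le 1$.

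If $a=b=2$, then $1/a+1/b=1$, and we also have $\operatorname{ppt}(f)\le 1$ trivially, since $f\in\mathfrak m$ and $R\to R^+$ sending $1\mapsto f$ is not pure. Combining the two bounds gives $\operatorname{ppt}(f)=1/a+1/b$.

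If the comparison with lct is not available in the form needed, the fallback is an explicit non-purity argument. For $t>1/a+1/b$, one would show that the socle element $\frac{1}{xy}$ of the top local cohomology is killed by $f^t$ in $H^2_{\mathfrak m}(R^+)$. This would use the $\mathbb{Q}$-grading with $\deg x=1/a$ and $\deg y=1/b$, together with the vanishing of graded pieces of $H^2$ of $R^+$ in negative degree.
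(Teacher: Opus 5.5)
Your proposal is correct and is essentially the paper's proof. The lower bound is Theorem~\ref{diagonal hypersurface brt} with $\alpha_1=1/a$, $\alpha_2=1/b$, where the carry condition is automatic because $\alpha_1^{(e)}+\alpha_2^{(e)}\le 2p-2$. The upper bound is the comparison with the characteristic-zero log canonical threshold, which the paper makes precise in three steps: $+$-regularity implies klt by \cite[Theorem 6.21]{Ma-Schwede21}, \cite[Lemma 2.6]{Sato-Takagi25} passes this to $(\Z_p[x,y])_{(p,x,y)}$, and localizing to $(\Q_p[x,y])_{(x,y)}$ gives a contradiction with $\operatorname{lct}(f)=1/a+1/b$.

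Your fallback is unnecessary, and as written it is off by one dimension: $S$ is $3$-dimensional, so the relevant socle class is $[1/(pxy)]\in H^3_{\m}(S)$, not $1/(xy)\in H^2$.
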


In mixed characteristic $(0,2)$, the computation of plus-pure thresholds is more tractable, and we obtain bounds on plus-pure thresholds using splitting-order sequences (Theorem \ref{theorem splitting-order sequences and ppt}).

As an application of these theorems, we characterize which diagonal hypersurfaces are BCM-regular or perfectoid pure in mixed characteristic $(0,2)$ (Theorem \ref{theorem classification} and Corollary \ref{cor classification}).

In the above results, although the base ring is assumed to be $\Z_p$, the arguments extend straightforwardly to the case where the base ring is the ring of $p$-typical Witt vectors $W(k)$ of a perfect field $k$ of characteristic $p>0$.

The paper is organized as follows. In Section 2, we review the basic notions of singularity theory in mixed characteristic and their fundamental properties. In Section 3, we develop new techniques for computations in mixed characteristic. In Section 4, we study the BCM-regularity of Fermat-type hypersurfaces. In Section 5, we compute the plus-pure thresholds of hypersurfaces and classify BCM-regular and perfectoid pure diagonal hypersurfaces in mixed characteristic $(0,2)$.

\begin{acknowledgement}
 The author would like to express his gratitude to Shunsuke Takagi and Shou Yoshikawa for valuable discussion. The author is also grateful to Eamon Quinlan-Gallego and Karl Schwede for their comments on the manuscript.
 The author was partially supported by JSPS KAKENHI Grant Number 24KJ1040.
\end{acknowledgement}

\section{Preliminaries}
In this section, we review basic notions of singularities in mixed characteristic and prove several elementary statements about plus-pure thresholds. For the definition of perfectoid rings, we refer the reader to \cite[Section 3]{BMS18}.
\begin{defn}
	Let $(R,\m)$ be a Noetherian local domain of dimension $d$. A ring $B$ is said to be a \textit{(balanced) big Cohen--Macaulay $R^+$-algebra} (\textit{BCM $R^+$-algebra} for short) if $B$ is an $R^+$-algebra and any system of parameters $x_1,\dots,x_d$ is a regular sequence on $B$.
\end{defn}
\begin{rem}
    \begin{enumerate}
    \item Suppose that $S$ is a Noetherian local domain such that $S$ is an integral extension of $R$ in $R^+$. If $B$ is a BCM $R^+$-algebra, then $B$ is a BCM $S^+$-algebra (\cite[Corollary 4.6]{Dietz07}).
    \item If, in addition, $R$ is an excellent ring of residue characteristic $p>0$, the $p$-adic completion of $R^+$ is a BCM $R^+$-algebra (see \cite{HH92} in positive characteristic and \cite{Bhatt21} in mixed characteristic).
    \end{enumerate}
\end{rem}
We recall the basic classes of singularities and the invariants needed later. An $R$-linear map $f:M\to N$ is said to be \textit{pure} if for any $R$-module $L$, $f\otimes_RL:M\otimes_{R}L\to N\otimes_RL$ is injective.
\begin{defn}[{\cite[Definition 6.9]{Ma-Schwede21}, \cite[Definition 5.1.3]{CLMST26}}] 
	Let $(R,\m)$ be a Noetherian complete normal $\Q$-Gorenstein local domain of residue characteristic $p>0$, $f\in \m$ be a nonzero element and $t\ge 0$ be a rational number. 
\begin{enumerate}
\item A pair $(R,f^t)$ is said to be \textit{$+$-regular} if $R\xrightarrow{f^t}R^+$ is pure.
\item A pair $(R,f^t)$ is said to be \textit{(perfectoid) BCM-regular} if for any perfectoid BCM $R^+$-algebra, $R\xrightarrow{f^t} B$ is pure.
\end{enumerate}
 We say that $R$ is $+$-regular (resp. BCM-regular) if the pair $(R,1^t)$ is $+$-regular (resp. BCM-regular).
\end{defn}
\begin{rem}
	\begin{enumerate}
		\item Since $R^+$ contains $n$-th roots of $f$ for any $n\in \N_{>0}$ and they differ only up to multiplication by a unit, the above definitions are well defined.
		\item In this paper, we mainly discuss hypersurfaces, so the above definition is sufficient for our purposes. If the ring $R$ is not $\Q$-Gorenstein, there are several versions of $+$-regularity and BCM-regularities (see \cite[Section 5.3]{CLMST26}).
	\end{enumerate}
\end{rem}
\begin{defn}[{\cite[Definition 2.1]{CPQGST25}}] \label{definition thresholds}
	Let $(R,\m)$ be a Noetherian complete normal $\Q$-Gorenstein local domain of residue characteristic $p>0$, for any nonzero element $f\in \m$. 
 The \textit{plus-pure threshold} $\operatorname{ppt}(R,f)$ of $(R,f)$ is defined as
\[
	\sup\{t\in \Q_{\ge 0}\mid \text{$(R,f^t)$ is $+$-regular}\}.
\]
If the ring $R$ is clear from the context, we use $\operatorname{ppt}(f)$ to denote $\operatorname{ppt}(R,f)$ and $\ppt(f)$ to denote $\ppt(R,f)$.
\end{defn}
We also recall the notion of perfectoid purity.
\begin{defn}[{\cite[Definition 4.1]{BMPSTWW24}}]
    Let $R$ be a Noetherian complete local ring of residue characteristic $p>0$. Then $R$ is \textit{perfectoid pure} if there exists a perfectoid $R$-algebra such that $R\to B$ is pure.
\end{defn}
\begin{prop} \label{BCM-regularity and brt}
	Let $m, n$ be positive integers, let $S:=\Z_p\llbracket x_1,\dots,x_n\rrbracket$ and let $f\in (p,x_1,\dots,x_n)S$. Suppose that $R:=\Z_p\llbracket x_0 \rrbracket/(x_0^m+f)$ is a domain. The following three conditions are equivalent:
	\begin{enumerate}
		\item $R$ is BCM-regular.
		\item $f^{\frac{m-1}{m}}\notin (p,x_1,\dots,x_n)B$ for any perfectoid BCM $S^+$-algebra.
		\item $\ppt(S,f)>(m-1)/m$.
	\end{enumerate}
\end{prop}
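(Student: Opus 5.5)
The plan is to reduce everything to a socle computation, using that $R$ is Gorenstein and that $R^+=S^+$.

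\textbf{Setup.} Since $x_0$ is a root of the monic polynomial $T^m+f$ over $S$, the ring $R$ is finite free over $S$ with basis $1,x_0,\dots,x_0^{m-1}$. As $x_0^m=-f\in(p,x_1,\dots,x_n)$, the elements $p,x_1,\dots,x_n$ form a system of parameters of $R$. The ring $R$ is a complete intersection, hence Gorenstein, and $R/(p,x_1,\dots,x_n)R\cong \mathbb{F}_p[x_0]/(x_0^m)$ has socle generated by $x_0^{m-1}$.

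Moreover $R\subseteq S^+$, because $x_0$ is an $m$-th root of $-f$. Hence $R^+=S^+$, and by the Remark after the BCM definition the perfectoid BCM $R^+$-algebras are exactly the perfectoid BCM $S^+$-algebras.

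\textbf{(1)$\Leftrightarrow$(2).} Let $B$ be a BCM $R$-algebra with $R$ Gorenstein. Then $R\to B$ is pure if and only if $H^{n+1}_\m(R)\to H^{n+1}_\m(B)$ is injective. Since $H^{n+1}_\m(R)$ is an essential extension of its socle, this holds if and only if the socle class $[x_0^{m-1}/(p x_1\cdots x_n)]$ maps to a nonzero element. Because $p,x_1,\dots,x_n$ is a regular sequence on $B$, the map $B/(p,\underline{x})B\to H^{n+1}_\m(B)$ is injective. So the condition becomes $x_0^{m-1}\notin (p,x_1,\dots,x_n)B$. Finally, $x_0^{m-1}$ differs from $f^{(m-1)/m}$ by a unit of $S^+$ (a choice of $m$-th root of $-1$), which gives (1)$\Leftrightarrow$(2).

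\textbf{Reformulating (3).} The same argument applies to the regular ring $S$, whose socle class is $1$. For $t\in\Q_{\ge0}$ and any BCM $S$-algebra $B$, the map $S\xrightarrow{f^t}B$ is pure if and only if $f^t\notin(p,x_1,\dots,x_n)B$. Apply this to $B=\widehat{S^+}$, which is a perfectoid BCM $S^+$-algebra. Purity into $S^+$ is equivalent to purity into its $p$-adic completion, since the test only involves $S^+/(p,\underline{x})$. So (3) says there is some $t>(m-1)/m$ with $f^t\notin(p,\underline{x})\widehat{S^+}$.

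\textbf{(2)$\Rightarrow$(3) and (3)$\Rightarrow$(2).} Every perfectoid BCM $S^+$-algebra $B$ receives a map from $\widehat{S^+}$. Hence $f^s\notin(p,\underline x)B$ implies $f^s\notin(p,\underline x)\widehat{S^+}$. So (2) yields $+$-regularity of $(S,f^{(m-1)/m})$, but only at the exponent $(m-1)/m$ itself. Conversely, (3) gives $+$-regularity at some $t>(m-1)/m$, and we must deduce non-containment in every perfectoid BCM $B$ at the smaller exponent $(m-1)/m$.

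Both directions therefore need the comparison between BCM-regularity and $+$-regularity up to a small perturbation of the exponent:
\begin{enumerate}
\item if $(S,f^{t})$ is $+$-regular and $s<t$, then $(S,f^{s})$ is BCM-regular;
\item if $(S,f^{s})$ is BCM-regular, then $(S,f^{s+\epsilon})$ is $+$-regular for some $\epsilon>0$.
\end{enumerate}
I would invoke these from \cite{CLMST26} and \cite{BMPSTWW24}, namely that BCM test ideals equal $+$-test ideals after an $\epsilon$-perturbation and that the jumping numbers are discrete and rational. With (i), (3) implies BCM-regularity of $(S,f^{(m-1)/m})$, which is (2). With (ii), (2) implies $+$-regularity at $(m-1)/m+\epsilon$, so $\ppt(S,f)>(m-1)/m$, which is (3).

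\textbf{Main obstacle.} The main obstacle is exactly this $\epsilon$-comparison: a single $\widehat{S^+}$ does not control all perfectoid BCM algebras at the same exponent. If the needed statement is not directly available in the references, I would try to prove (ii) from discreteness of the jumping numbers of the BCM test ideal $\tau_B(S,f^t)$, together with the fact that a sufficiently large perfectoid BCM $B$ computes the test ideal.
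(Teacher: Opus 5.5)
Your proposal is correct and follows the paper's proof. You prove (1)$\Leftrightarrow$(2) the same way: the socle class $[x_0^{m-1}/(px_1\cdots x_n)]$ of the Gorenstein ring $R$, together with $R^+\cong S^+$ and $x_0=(-f)^{1/m}$. You then reduce (2)$\Leftrightarrow$(3) to exactly the two $\epsilon$-comparisons the paper uses; it takes your (ii) from \cite[Proposition 6.10]{Ma-Schwede21} (BCM-regularity persists at $s+\epsilon$, and BCM-regular implies $+$-regular via $B=\widehat{S^+}$) and your (i) from \cite[Theorem 8.11]{BMPSTWW25}, so you do not need discreteness or rationality of jumping numbers, which is not generally available in mixed characteristic.
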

\begin{proof}
	Since $R$ is a finite extension of $S$, it follows that $R^+\cong S^+$ as an $S$-algebra. We can choose an isomorphism fitting into the following commutative diagram:
\[
	\xymatrix{
		S \ar[r] \ar[d] & R \ar[d]\\
		S^+ \ar[r]^{\cong}_{x_0\mapsto (-f)^{\frac{1}{m}}} & R^+,
	}
\]
where $(-f)^{1/m}$ denotes an $m$-th root of ${-f}$ in $R^+$.
Let $B$ be any integral perfectoid $S^+$-algebra. Then $B$ is also a BCM $R^+$-algebra via the chosen isomorphism $R^+ \cong S^+$. Since $R$ is Gorenstein, $R\to M$ is pure if and only if $H_\m^{d}(R)\to H_\m^{d}(M)$ is injective for any $R$-module $M$, where $d=n+1=\operatorname{dim} R$ and $\m$ is the maximal ideal of $R$. Since $R$ is Gorenstein, the socle of $H_\m^d(R)$ is a one-dimensional $\mathbb{F}_p$-vector space. Moreover, $\left[ \frac{x_0^{m-1}}{px_1\cdots x_n}\right]$ is a generator of the socle. Therefore, $R\to B$ is pure if and only if 
\[
	\left[ \frac{(-f)^{\frac{m-1}{m}}}{px_1\cdots x_n} \right] \neq 0 \in H_\m^d(B),
\]
which is equivalent to the condition that $f^{(m-1)/m}\notin (p,x_1,\dots,x_n)B$. Hence, condition (1) is equivalent to condition (2). 
 
Suppose that condition (2) holds. By \cite[Proposition 6.10]{Ma-Schwede21}, $(S,f^{(m-1)/m+\epsilon})$ is BCM-regular for any rational number $0<\epsilon \ll 1$.
Hence, $\ppt(S,f)>(m-1)/m$. 

Conversely, suppose that condition (3) holds. $(R,f^{(m-1)/m+\epsilon})$ is $+$-regular for any rational number $0<\epsilon \ll 1$.
Since $S$ is regular, it follows from \cite[Theorem 8.11]{BMPSTWW25} that this is equivalent to $(S,f^{(m-1)/m})$ being BCM-regular. Hence, condition (2) holds.

\end{proof}

\begin{rem} \label{remark +-regular and ppt}
Similarly, in the same setting as Proposition \ref{BCM-regularity and brt}, the following two conditions are equivalent:
\begin{enumerate}
	\item $R$ is $+$-regular.
	\item $f^{\frac{m-1}{m}}\notin (p,x_1,\dots,x_n)S^+$.
\end{enumerate}
\end{rem}

The following proposition shows that the BCM-regularity for a certain class of rings implies the log terminality for the corresponding rings in equal characteristic zero.
\begin{lem} \label{+-regular and log terminal}
	Let $n$ be a positive integer and $R:=(\Z_p[x_1,\dots,x_n]/(f))_{(p, x_1,\dots,x_n)}$, where $f\in (x_1,\dots,x_n)\Z[x_1,\dots,x_n]$. If $\widehat{R} \cong \Z_p\llbracket x_1,\dots,x_n\rrbracket/(f)$ is $+$-regular, then $(\C[x_1,\dots,x_n]/(f))_{(x_1,\dots,x_n)}$ has log terminal singularities.
\end{lem}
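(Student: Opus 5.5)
The plan is to pass from mixed characteristic to characteristic $p$ and then to characteristic zero. The bridge is to realize $+$-regularity of $\widehat R$ as strong $F$-regularity (or $F$-regularity) of the reductions $R/pR$ for the given prime, and then use the theorem of Hara and of Mehta--Srinivas/Smith that $F$-regular type implies log terminal. One fixed prime gives only a single reduction, though, so the argument needs more care than that.

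\textbf{Step 1: reduce to the special fibre.} The element $p$ is a nonzerodivisor in $\widehat R$. I would use that $+$-regularity (or BCM-regularity) of $\widehat R$ implies that $\widehat R/p \cong \mathbb{F}_p\llbracket x_1,\dots,x_n\rrbracket/(\bar f)$ is strongly $F$-regular, or at least $F$-pure in a suitable sense. This is inversion of adjunction in the form of Ma--Schwede: $(\widehat R,\operatorname{div} p)$ being purely BCM-regular relates to $R/p$ being $F$-regular. The difficulty is that the implication actually runs the other way: $F$-regularity of $R/p$ implies BCM-regularity of $R$. So this route is likely wrong.

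\textbf{Alternative, preferred route.} Work through the generic fibre instead. $+$-regularity of $\widehat R$ implies BCM-regularity, and by \cite{Ma-Schwede21} BCM-regular rings satisfy the following: for any proper birational $\pi\colon Y\to \operatorname{Spec}\widehat R$, the multiplier-type module $\pi_*\omega_Y$ equals $\omega_{\widehat R}$. In particular they are KLT in the sense that all discrepancies are $>-1$, with the log terminal condition checked on a log resolution of the generic fibre. Concretely, BCM-regularity of a $\Q$-Gorenstein ring implies that the BCM test ideal $\tau_{\mathcal B}(\widehat R)$ is trivial. Ma--Schwede show that $\tau_{\mathcal B}$ is contained in $\pi_*\mathcal O_Y(K_Y-\pi^*K)$ for any proper birational $\pi$ from a normal $Y$. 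Hence $\widehat R$ is KLT with respect to all such blowups.

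Next I would take a log resolution $\pi_{\C}$ of $\C[x]/(f)$ near the origin. Since $f$ has integer coefficients, this resolution is defined over some number field, and hence over $\Z[1/N]$ for some $N$. The main obstacle is that $p$ may divide $N$. To avoid needing the resolution to be smooth in mixed characteristic, use only birationality: $\pi$ spreads out to a proper birational morphism $\pi_{\Z}$ over $\Z$, after base change to a finite extension, since normalized blowups are defined over $\Z$. Take the normalization of the base change to $W$, a ring of integers over $\Z_p$ (contained in $R^+$). For each exceptional divisor $E$ over $\C$, its closure $\mathcal E$ in this model is a prime divisor dominating the generic fibre. The discrepancy $a(\mathcal E)$ computed on the mixed characteristic model equals $a(E)$, because $K$ and $\pi^*K$ are computed at the generic point of $\mathcal E$, which lies in characteristic zero. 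The trivial test ideal, together with the Ma--Schwede containment (which holds for arbitrary normal proper birational $Y$, regardless of smoothness), gives $a(\mathcal E)>-1$. Hence every divisor over the characteristic-zero singularity has discrepancy $>-1$, and the singularity is log terminal. The Gorenstein hypothesis is automatic for a hypersurface.

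The key subtlety to verify is that the containment $\tau\subseteq\pi_*\mathcal O_Y(K_Y-\pi^*K)$, applied to the base-changed local ring $\widehat R\otimes W$, still reflects BCM-regularity of $\widehat R$. This holds because $R^+$ is unchanged under this finite extension, and normality is preserved after localization.
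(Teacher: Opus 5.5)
Your plan has the right architecture: bound discrepancies in mixed characteristic for divisors whose generic points lie over $p\neq 0$, then compare with the characteristic-zero singularity. You were also right to drop Step 1. However, two of your justifications fail.

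\textbf{First gap: $+$-regular does not give BCM-regular.} You convert $+$-regularity of $\widehat R$ into BCM-regularity in order to quote the containment $\tau_{\mathcal B}\subseteq\pi_*\mathcal O_Y(K_Y-\pi^*K)$. That implication is not known to hold in general. The paper only obtains BCM-regularity from $+$-regularity after a perturbation (via \cite[Theorem 8.11]{BMPSTWW25} in Proposition \ref{BCM-regularity and brt}). It proves ``$+$-regular $\Leftrightarrow$ BCM-regular'' only for the specific rings of Theorem \ref{theorem classification}.

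You do not need it. The Ma--Schwede argument runs with $B$ the $p$-adic completion of $(\widehat R)^+$ itself. By Bhatt's vanishing theorem \cite{Bhatt21}, $H^d_\m(\widehat R)\to H^d_\m(B)$ factors through $\mathbb H^d_\m(R\Gamma(Y,\mathcal O_Y))$ for every proper birational $Y\to\operatorname{Spec}\widehat R$. Hence purity of $\widehat R\to(\widehat R)^+$ alone forces $K_Y-\pi^*K\ge 0$.

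The paper uses exactly this input, more economically. The composite $R\to\widehat R\to(\widehat R)^+$ is pure and factors through $R^+$, so the non-complete ring $R$ is a splinter. Then \cite[Corollary 7.18]{BMPSTWW25} gives $\mathcal J(R[1/p])=R[1/p]$ at once, with no spreading out of resolutions. Log terminality over $\C$ then follows from \cite[Proposition 2.15]{Kol13}.

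\textbf{Second gap: the ``key subtlety'' is resolved incorrectly.} Sharing the absolute integral closure does not let $+$-regularity or BCM-regularity ascend to a finite extension. For instance, $\Z_p\llbracket x_0,\dots,x_n\rrbracket/(x_0^d+\dots+x_n^d)$ with $d\ge p^2$ is finite over the regular, hence $+$-regular, ring $\Z_p\llbracket x_1,\dots,x_n\rrbracket$. Yet it is not $+$-regular by Proposition \ref{Fermat type >=p^2}. Moreover, $\widehat R\otimes_{\Z_p}W$ need not even be normal when $W/\Z_p$ is ramified; take $f=x_1^2-px_2$ and $W=\Z_p[\sqrt p]$.

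The extension $W$ is unnecessary anyway. Since $f\in\Z[x_1,\dots,x_n]$, you can proceed as follows.
\begin{enumerate}
\item Take a log resolution over $\Q$.
\item Spread it out to a normal proper birational model over $\widehat R$ by blowing up an ideal and normalizing.
\item Read off discrepancies at generic points lying over $p\neq 0$.
\item Pass from $\Q_p$ to $\C$ via invariance of log terminality under base field extension \cite[Proposition 2.15]{Kol13}, as the paper does.
\end{enumerate}
If you insist on using $W$, the correct move is to push each $\mathcal E$ down to a divisor over $\widehat R$. Its discrepancy is unchanged because $\widehat R\to\widehat R\otimes_{\Z_p}W$ is \'etale after inverting $p$. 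Do not try to transfer the singularity condition up to $\widehat R\otimes_{\Z_p}W$.
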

\begin{proof}
	Suppose that $\widehat{R}$ is $+$-regular. We have a commutative diagram
\[
	\xymatrix{
		R \ar[r] \ar[d] & R^+ \ar[d] \\
		\widehat{R} \ar[r] & (\widehat{R})^{+}.
	}
\]
Since the morphisms $R\to \widehat{R}$ and $\widehat{R} \to (\widehat{R})^+$ are pure, the morphism $R\to R^+$ is also pure.
Hence, $R\to S$ splits for any finite extension $S$ of $R$ contained in $R^+$.
By \cite[Corollary 7.18]{BMPSTWW25}, we have $\mathcal{J}(R[1/p])=R[1/p]$, where $\mathcal{J}(R[1/p])$ is the multiplier ideal of $R[1/p]$. Hence, $R[1/p]$ has log terminal singularities. Since $(\Q_p[x_1,\dots,x_n]/(f))_{(x_1,\dots,x_n)}$ is a localization of $R[1/p]$, the ring $(\Q_p[x_1,\dots,x_n]/(f))_{(x_1,\dots,x_n)}$ has log terminal singularities. Log terminal singularities over a field of characteristic zero are geometrically log terminal by \cite[Proposition 2.15]{Kol13}. Hence, $(\C[x_1,\dots,x_n]/(f))_{(x_1,\dots,x_n)}$ has log terminal singularities.
\end{proof}
\begin{cor} \label{+-regular and canonical diagonal hypersurface}
	Let $n, d_1,\dots,d_n$ be positive integers, $f:=x_1^{d_1}+\dots+x_n^{d_n}$ and $R:=\Z_p\llbracket x_1,\dots,x_n \rrbracket/(f)$. If $R$ is $+$-regular, then 
\[
	\frac{1}{d_1}+\cdots+\frac{1}{d_n}>1.
\]
\end{cor}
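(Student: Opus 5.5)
We reduce the statement to Lemma \ref{+-regular and log terminal} and then use the classical characteristic-zero criterion for log terminality of weighted homogeneous hypersurfaces. Note that $f=x_1^{d_1}+\dots+x_n^{d_n}$ has integer coefficients and lies in $(x_1,\dots,x_n)\Z[x_1,\dots,x_n]$. Also, the completion of $(\Z_p[x_1,\dots,x_n]/(f))_{(p,x_1,\dots,x_n)}$ is $\Z_p\llbracket x_1,\dots,x_n\rrbracket/(f)=R$. So if $R$ is $+$-regular, Lemma \ref{+-regular and log terminal} shows that $A:=(\C[x_1,\dots,x_n]/(f))_{(x_1,\dots,x_n)}$ has log terminal singularities. (We may assume $n\ge 2$, and in fact $n\ge 3$ for normality in the relevant cases. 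If some $d_i=1$ the inequality is trivial, so assume all $d_i\ge 2$.)

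It remains to show that if the complex diagonal hypersurface $\{f=0\}$ is log terminal at the origin, then $\sum 1/d_i>1$. We use inversion of adjunction. Since the hypersurface is Gorenstein (Cartier in $\C^n$), log terminality of $A$ is equivalent to $(\C^n,\{f=0\})$ being plt near $0$. This in turn forces $\operatorname{lct}_0(f)>1$ or, more directly, forces the divisor $E$ below to have discrepancy $>-1$.

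To test this, take the weighted blow-up of $\C^n$ with weights $w_i=1/d_i$, scaled to integers by multiplying by $L=\operatorname{lcm}(d_i)$, so $w_i=L/d_i$. Its exceptional divisor $E$ has log discrepancy
\[
a(E,\C^n)=\sum w_i=L\sum\frac{1}{d_i}.
\]
Since $f$ is weighted homogeneous of weighted degree $L$, we have $\operatorname{ord}_E(f)=L$. Therefore
\[
a(E,\C^n,\{f=0\})=L\Big(\sum\frac{1}{d_i}-1\Big).
\]
The divisor $E$ is exceptional over $\C^n$ with center the origin, which lies on $\{f=0\}$. Plt-ness of $(\C^n,\{f=0\})$ requires this log discrepancy to be $>0$, giving $\sum 1/d_i>1$.

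Alternatively, we can cite the well-known formula $\operatorname{lct}_0(x_1^{d_1}+\dots+x_n^{d_n})=\min\{1,\sum 1/d_i\}$ together with the fact that $\operatorname{lct}\le 1$, with equality and log terminality of the hypersurface only if the pair is plt. The main point to check carefully is the passage from log terminality of the hypersurface $A$ to plt-ness of the pair $(\C^n,\{f=0\})$, which is inversion of adjunction (Kollár's theorem). Everything else is formal.
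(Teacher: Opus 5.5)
Your proof is correct. Its first step, applying Lemma~\ref{+-regular and log terminal} to get that $A=(\C[x_1,\dots,x_n]/(f))_{(x_1,\dots,x_n)}$ is log terminal, is exactly the paper's. The second step takes a different route.

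\textbf{Comparison of the second step.} The paper observes that a Gorenstein log terminal singularity is canonical \cite[Corollary 5.24]{KM98}. It then quotes Reid's criterion for canonical quasi-homogeneous hypersurface singularities \cite[Proposition (4.3)]{Reid80}, which gives $\sum 1/d_i>1$ as a black box.

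You instead use Koll\'ar's inversion of adjunction to get that $(\C^n,H)$ is plt near $0$, where $H=\{f=0\}$. This is valid because $A$ is normal and $H$ is a Cartier divisor on the smooth $\C^n$. You then test plt-ness on the single exceptional divisor $E$ of the weighted blow-up with weights $L/d_i$. Since $L=\operatorname{lcm}(d_i)$, these weights have gcd $1$, so $a(E,\C^n)=\sum L/d_i$ and $\operatorname{ord}_E f=L$ hold exactly. Only the sign of $L(\sum 1/d_i-1)$ matters anyway.

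The paper's route is a two-line citation of an if-and-only-if criterion. Yours proves only the needed direction, but it has three advantages:
\begin{itemize}
\item it bypasses the ``log terminal $\Rightarrow$ canonical'' step;
\item it exhibits the valuation that obstructs log terminality;
\item the same computation gives $\sum w_i>\deg_w f$ for any weighted homogeneous $f$ with log terminal zero locus.
\end{itemize}
The price is invoking inversion of adjunction, itself a substantial theorem.

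\textbf{Corrections to side remarks.}
\begin{itemize}
\item The clause ``this in turn forces $\operatorname{lct}_0(f)>1$'' is false: $\operatorname{lct}_0(f)\le 1$ for every $f$ vanishing at $0$. For the same reason, your ``alternative'' via $\operatorname{lct}_0(f)=\min\{1,\sum 1/d_i\}$ cannot give the strict inequality. When $\sum 1/d_i=1$ the pair $(\C^n,H)$ is lc but not plt. Only the plt computation with $E$ does the work, so those sentences should be dropped.
\item ``If some $d_i=1$ the inequality is trivial'' requires $n\ge 2$. For $n=d_1=1$ one has $R=\Z_p$, which is $+$-regular while $\sum 1/d_i=1$. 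So the statement itself needs $n\ge 2$, as in all of its uses in the paper.
\item For $n=2$ with $d_1,d_2\ge 2$, the ring $R$ is not normal, so the hypothesis is vacuous, as you indicate.
\end{itemize}
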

\begin{proof}
	By Lemma \ref{+-regular and log terminal}, the ring $(\C[x_1,\dots,x_n]/(x_1^{d_1}+\dots+x_n^{d_n}))_{(x_1,\dots,x_n)}$ has log terminal singularities. Since $(\C[x_1,\dots,x_n]/(x_1^{d_1}+\dots+x_n^{d_n}))_{(x_1,\dots,x_n)}$ is Gorenstein, it has canonical singularities by \cite[Corollary 5.24]{KM98}. Hence, 
\[
	\frac{1}{d_1}+\cdots+\frac{1}{d_n}>1	
\]
by \cite[Proposition (4.3)]{Reid80}.
\end{proof}
 We conclude this section with a proposition for later use.
\begin{prop} \label{proposition index inequality}
	Let $p$ be a prime number, $n\ge 3$ be an integer and $a_i, b_i$ be positive integers such that $a_i\le b_i$ for any $i=1,\dots,n$. If $\Z_p\llbracket x_1,\dots,x_n \rrbracket/(x_1^{b_1}+\dots +x_n^{b_n})$ is BCM-regular (resp. $+$-regular), then $\Z_p\llbracket x_1,\dots,x_n \rrbracket/(x_1^{a_1}+\dots+x_n^{a_n})$ is BCM-regular (resp. $+$-regular).
\end{prop}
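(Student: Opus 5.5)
The plan is to change one exponent at a time and apply the criterion of Proposition \ref{BCM-regularity and brt} (and Remark \ref{remark +-regular and ppt} for $+$-regularity), with the distinguished variable $x_0$ of that proposition played by the variable whose exponent is being lowered.

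\emph{Reduction.} Consider the chain of exponent vectors
\[
(b_1,\dots,b_n),\ (a_1,b_2,\dots,b_n),\ (a_1,a_2,b_3,\dots,b_n),\ \dots,\ (a_1,\dots,a_n).
\]
Consecutive vectors differ in exactly one coordinate, and there the exponent can only go down. So it suffices to prove one step: if $\Z_p\llbracket x_1,\dots,x_n\rrbracket/(x_1^{m'}+g)$ is BCM-regular (resp.\ $+$-regular) and $m\le m'$, then $\Z_p\llbracket x_1,\dots,x_n\rrbracket/(x_1^{m}+g)$ is too. Here $x_1$ stands for the variable whose exponent changes, after renaming, and $g$ is a sum of at least two pure powers of the remaining variables.

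\emph{One step.} Set $S:=\Z_p\llbracket x_2,\dots,x_n\rrbracket$ and let $g\in(p,x_2,\dots,x_n)S$ be as above. By condition (2) of Proposition \ref{BCM-regularity and brt}, BCM-regularity of $S\llbracket x_1\rrbracket/(x_1^{m'}+g)$ means that
\[
g^{\frac{m'-1}{m'}}\notin(p,x_2,\dots,x_n)B
\]
for every perfectoid BCM $S^+$-algebra $B$. Since $m\le m'$, we have $\frac{m-1}{m}\le\frac{m'-1}{m'}$, so inside $S^+\to B$ we can write
\[
g^{\frac{m'-1}{m'}}=g^{\frac{m-1}{m}}\cdot g^{\frac{m'-1}{m'}-\frac{m-1}{m}}.
\]
Hence if $g^{\frac{m-1}{m}}$ lay in $(p,x_2,\dots,x_n)B$, so would $g^{\frac{m'-1}{m'}}$. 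Therefore $g^{\frac{m-1}{m}}\notin(p,x_2,\dots,x_n)B$ for all such $B$, and $S\llbracket x_1\rrbracket/(x_1^m+g)$ is BCM-regular. Equivalently, in the language of condition (3), $\ppt(S,g)>\frac{m'-1}{m'}\ge\frac{m-1}{m}$. The $+$-regular case is the same argument with $B=S^+$, using Remark \ref{remark +-regular and ppt}.

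\emph{Domain hypothesis.} The only point needing care is the hypothesis of Proposition \ref{BCM-regularity and brt} that $x_1^m+g$ generates a prime ideal; this is exactly where $n\ge3$ enters. Since $g$ involves at least two variables $x_2,\dots,x_n$, it is reduced. Indeed, after inverting $p$ its partial derivatives $b_ix_i^{b_i-1}$ have no common factor with $g$, so $g$ is squarefree in the UFD $S$. Choose a prime factor $\pi$ of $g$ occurring with multiplicity one. Then $x_1^m+g$ is Eisenstein at $\pi$ as a polynomial in $x_1$ over $S$, hence irreducible in $S[x_1]$. It remains irreducible in $S\llbracket x_1\rrbracket$ by the Weierstrass preparation theorem: it is, up to a unit, a distinguished polynomial in $x_1$, because $g$ lies in the maximal ideal of $S$. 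For $n=2$ this would fail in general; for example, $x_1^2+x_2^2$ factors over $\Z_5$. I expect this irreducibility verification to be the main (though routine) obstacle. The monotonicity step itself is immediate from the factorization of $g^{\frac{m'-1}{m'}}$ above.
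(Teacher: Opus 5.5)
Your proof is correct, but it takes a different route from the paper's.

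The paper keeps $x_1$ as the distinguished variable throughout. It handles $a_1\to b_1$ by the same monotonicity $\frac{a_1-1}{a_1}\le\frac{b_1-1}{b_1}$ that you use. It handles all the changes $a_i\to b_i$ for $i\ge 2$ at once by the substitution $x_i\mapsto x_i^{b_i/a_i}$, extended to an injective endomorphism $\sigma'$ of $S^+$. This map sends $(x_2^{a_2}+\dots+x_n^{a_n})^{\frac{a_1-1}{a_1}}$ to an $a_1$-th root of $(x_2^{b_2}+\dots+x_n^{b_n})^{a_1-1}$, and sends $(p,x_2,\dots,x_n)$ into $(p,x_2^{b_2/a_2},\dots,x_n^{b_n/a_n})\subseteq(p,x_2,\dots,x_n)$. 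In the BCM case it also needs \cite[Theorem A.5]{MSTWW22} to produce a perfectoid BCM algebra $C$ with a map from $B$ compatible with $\sigma'$.

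You avoid all of this by observing that in a diagonal hypersurface any variable can play the role of $x_0$ in Proposition \ref{BCM-regularity and brt}. Each single exponent change then becomes a change of the distinguished exponent. So the only input is the monotonicity in $\frac{m-1}{m}$, or directly $\ppt(S,g)>\frac{m'-1}{m'}\ge\frac{m-1}{m}$ via condition (3). No substitution and no compatibility result for BCM algebras is needed.

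The cost is that you pass through the intermediate rings of your chain. Each of these must be a domain for Proposition \ref{BCM-regularity and brt} to apply. Your squarefree, Eisenstein and Weierstrass argument handles this correctly. To exclude $p$ as a repeated factor you are implicitly using $p\nmid g$, which is clear. This argument also justifies the domain property that the paper only asserts.

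Conversely, the paper's substitution argument does not need each of $x_2,\dots,x_n$ to appear as an isolated pure power. Given the domain hypotheses, it adapts verbatim to comparing $x_1^{a_1}+h(x_2^{a_2},\dots,x_n^{a_n})$ with $x_1^{b_1}+h(x_2^{b_2},\dots,x_n^{b_n})$ for an arbitrary power series $h$. Your symmetry trick is specific to the fully diagonal shape.
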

\begin{proof}
	First, we give the proof for $+$-regularity.
	Let $S:=\Z_p\llbracket x_2,\dots,x_n \rrbracket$.
	 Consider an injective ring homomorphism
	\[
		\sigma:S \to S[x_2^{\frac{1}{a_2}},\dots, x_n^{\frac{1}{a_n}}]; x_i \mapsto x_i^{\frac{b_i}{a_i}},
	\]
	where $x_i^{1/a_i}$ denotes an $a_i$-th root of $x_i$ in $S^+$. Then there exists an injective ring homomorphism $\sigma':S^+\to S^+$ fitting into the following commutative diagram
\[
	\xymatrix{
		S \ar[d]_{\iota} \ar[r]^-{\sigma} & S[x_2^{\frac{1}{a_2}},\dots, x_n^{\frac{1}{a_n}}]\ar[d]^{\iota'}\\
		S^+ \ar[r]^{\sigma'} & S^+,
	}
\]
	where $\iota$ and $\iota'$ are natural injections.
Suppose that $\Z_p\llbracket x_1,\dots,x_n\rrbracket/(x_1^{a_1}+\dots+x_n^{a_n})$ is not $+$-regular. Since $\Z_p\llbracket x_1,\dots,x_n\rrbracket/(x_1^{a_1}+\dots+x_n^{a_n})$ is a domain, we have
	\[
		(x_2^{a_2}+\dots+x_n^{a_n})^{\frac{a_1-1}{a_1}}\in (p,x_2,\dots,x_n)S^+
	\]
	by Remark \ref{remark +-regular and ppt}. Hence, 
	\[
		\sigma'((x_2^{a_2}+\dots+x_n^{a_n})^{\frac{a_1-1}{a_1}})\in (p,x_2^{\frac{b_2}{a_2}},\dots,x_n^{\frac{b_n}{a_n}})S^+.
	\]
	Then $(\sigma'((x_2^{a_2}+\dots+x_n^{a_n})^{(a_1-1)/a_1}))^{a_1}=(x_2^{b_2}+\dots+x_n^{b_n})^{a_1-1}$. Since $(b_1-1)/b_1\ge(a_1-1)/a_1$, we have
\[
	(x_2^{b_2}+\dots+x_n^{b_n})^{\frac{b_1-1}{b_1}}\in (p,x_2^{\frac{b_2}{a_2}},\dots,x_n^{\frac{b_n}{a_n}})S^+ \subseteq (p,x_2,\dots,x_n)S^+.
\]
By Remark \ref{remark +-regular and ppt} again, $\Z_p\llbracket x_1,\dots,x_n \rrbracket/(x_1^{b_1}+\dots +x_n^{b_n})$ is not $+$-regular. 

Next, suppose that $\Z_p\llbracket x_1,\dots,x_n\rrbracket/(x_1^{a_1}+\dots+x_n^{a_n})$ is not BCM-regular. Then, by Proposition \ref{BCM-regularity and brt}, there exists a perfectoid BCM $S^+$-algebra $B$ such that
\[
	(x_2^{a_2}+\dots+x_n^{a_n})^{\frac{a_1-1}{a_1}}\in (p,x_2,\dots,x_n)B.
\]
By \cite[Theorem A.5]{MSTWW22}, there exists a perfectoid BCM $S^+$-algebra $C$ fitting into the following commutative diagram:
\[
		\xymatrix{
			S \ar[r]^-{\sigma} \ar[d]_{\iota} & S[x_2^{\frac{1}{a_2}},\dots, x_n^{\frac{1}{a_n}}] \ar[d]^{\iota'}\\
		S^+ \ar[r]^{\sigma'} \ar[d] & S^+ \ar[d]\\
		B  \ar[r]^{\sigma''} & C,
	}
\]
where $\sigma''$ denotes the bottom horizontal morphism.
Then
\[
	\sigma''((x_2^{a_2}+\dots+x_n^{a_n})^{\frac{a_1-1}{a_1}})\in (p,x_2^{\frac{b_2}{a_2}},\dots,x_n^{\frac{b_n}{a_n}})C.
\]
Hence,
\[
	(x_2^{b_2}+\dots+x_n^{b_n})^{\frac{a_1-1}{a_1}}\in (p,x_2^{\frac{b_2}{a_2}},\dots,x_n^{\frac{b_n}{a_n}})C.
\]
By Proposition \ref{BCM-regularity and brt} again, $\Z_p\llbracket x_1,\dots,x_n \rrbracket/(x_1^{b_1}+\dots +x_n^{b_n})$ is not BCM-regular.
\end{proof}
\section{The \texorpdfstring{$p$}{p}-th root formulae}
In this section, we present basic formulae used in later chapters.
Let $p$ be a prime number, $n$ be a positive integer and $S:=\Z_p\llbracket x_1,\dots,x_n\rrbracket$. We define a $\Z_p$-algebra endomorphism
\[
	\phi:S\to S; f\mapsto f(x_1^p,\dots,x_n^p),
\]
and set
\[
	\delta(f):=\frac{\phi(f)-f^p}{p}.
\]
For $S_\infty:=\Z_p\llbracket x_1,\dots,x_n \rrbracket[x_1^{1/p^\infty},\dots, x_n^{1/p^\infty}]$, we can naturally extend $\phi$ to an automorphism on $S_\infty$. When we consider $f^{1/p^i}$ for an element $f\in S$, we implicitly assume that $\{f^{1/p^i}\}_{i\ge 0}$ is a compatible system, i.e., $(f^{1/p^{i+1}})^p=f^{1/p^{i}}$.
\begin{lem} \label{p-root formula}
	Let $S=\Z_p\llbracket x_1,\dots,x_n\rrbracket$ and $f\in S$. Then, for any $e\ge 1$, there exists $\beta_e\in S^+$ such that 
\begin{align*}
	f^{\frac{1}{p}} &= \phi^{-1}(f)+\sum_{i=1}^{e} (-1)^{i-1}p^{\frac{1}{p}+\cdots+\frac{1}{p^i}} (\phi^{-i}(f))^{p^{i-1}-1}\phi^{-(i+1)}(\delta(f))\\
&+p^{\frac{1}{p}+\dots+\frac{1}{p^{e+1}}}\beta_e 
\end{align*}
in $S^+$.
\end{lem}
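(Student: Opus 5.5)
The plan is to compare $f^{\frac1p}$ with the truncated sum
\[
y_e:=\phi^{-1}(f)+\sum_{i=1}^{e}(-1)^{i-1}p^{\frac1p+\cdots+\frac1{p^i}}(\phi^{-i}(f))^{p^{i-1}-1}\phi^{-(i+1)}(\delta(f)),
\]
which lies in $S_\infty[p^{1/p^\infty}]\subseteq S^+$. Set $c_e:=1+\frac1p+\cdots+\frac1{p^e}$. The proof has two steps. First show $f-y_e^p\in p^{c_e}S^+$. Then show that a congruence of $p$-th powers forces a congruence of the elements themselves, namely $f^{\frac1p}-y_e\in p^{c_e/p}S^+$. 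Since $c_e/p=\frac1p+\cdots+\frac1{p^{e+1}}$, this produces the required $\beta_e$.

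\textbf{Step 1 (the $p$-th power congruence).} Put $g=\phi^{-1}(f)\in S_\infty$. Then $\delta(g)=(f-g^p)/p=\phi^{-1}(\delta(f))$, so
\[
f=(\phi^{-1}(f))^p+p\,\phi^{-1}(\delta(f)).
\]
This identity is the base case. Now expand $y_e^p$ by the binomial theorem, treating $y_e=\phi^{-1}(f)+T$ with $T$ the correction sum.
\begin{itemize}
\item The term $pg^{p-1}T$ contributes the first-order pieces.
\item The term $T^p$ contributes $p\,\phi^{-1}(\delta(f))$ at leading order, because $\bigl(p^{\frac1p}\phi^{-2}(\delta(f))\bigr)^p=p\,\phi^{-1}(\delta(f))$.
\item Mixed binomial terms carry an extra factor $p$.
\end{itemize}
The alternating signs and the powers $(\phi^{-i}(f))^{p^{i-1}-1}$ are designed so that the cross term from the $i$-th summand cancels the leading part of the $p$-th-power contribution of the $(i+1)$-st summand. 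Here one uses repeatedly that $\phi^{-i}(f)^{p}=\phi^{-(i-1)}(f)-p\,\phi^{-i}(\delta(f))$, which is $\phi^{-(i-1)}$ applied to the identity above. Everything that survives should be divisible by $p^{c_e}$.

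I expect this bookkeeping of $p$-adic exponents to be the main obstacle. The point is to check that every uncancelled binomial term has exponent at least $c_e$. I would organize it by induction on $e$: show that
\[
f-y_{e}^p-(-1)^{e}p^{c_e}(\phi^{-e}(f))^{p^{e}-1}\phi^{-(e+1)}(\delta(f))
\]
lies in $p^{c_e+\epsilon}S^+$ for some $\epsilon>0$. Passing from $y_e$ to $y_{e+1}$ then kills that leading term, via the cross term $p\,y_e^{p-1}\cdot(\text{new summand})$ together with $y_e^{p-1}\equiv g^{p-1}$ and $g^{p-1}\cdot\phi^{-(e+1)}(f)^{p^e-1}$-type rewriting using $\phi$-compatibility.

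\textbf{Step 2 (extracting $p$-th roots).} Let $z=f^{\frac1p}-y_e$. Expanding $f=(y_e+z)^p$ gives
\[
z^p+p\,zw=f-y_e^p\in p^{c_e}S^+
\]
for some $w\in S^+$. Suppose $z\in p^{a}S^+$ for some $a\ge0$ (initially $a=0$). Then $z^p\in p^{\min(c_e,1+a)}S^+$. Since $S^+$ is integrally closed in its fraction field, $p^{-\min(c_e,1+a)/p}z$ is integral over $S^+$, hence lies in $S^+$. So $z\in p^{a'}S^+$ with $a'=\min(c_e,1+a)/p$. Iterating this map, the exponents increase to $\min\bigl(c_e/p,\tfrac1{p-1}\bigr)$. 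Because $c_e/p<\frac1{p-1}$ strictly, after finitely many iterations we reach $a=c_e/p$ exactly, via the branch $\min=c_e$ once $1+a\ge c_e$. We then set $\beta_e:=p^{-c_e/p}z\in S^+$.
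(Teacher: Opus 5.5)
Your Step 2 is correct: the bootstrap on $z^p+pzw=f-y_e^p$, using normality of $S^+$, does reach $c_e/p$ because $c_e<p/(p-1)$. The target of Step 1, $f-y_e^p\in p^{c_e}S^+$, is also true. However, the induction you sketch for Step 1 does not close as formulated.

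Write $y_{e+1}=y_e+t_{e+1}$ and $L_e:=(-1)^e p^{c_e}(\phi^{-e}(f))^{p^e-1}\phi^{-(e+1)}(\delta(f))$.

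\textbf{The wrong term does the cancelling.} The cross term $p\,y_e^{p-1}t_{e+1}$ lies in $p^{c_{e+1}}S^+$. It therefore cannot cancel $L_e$, which in general is not in $p^{c_{e+1}}S^+$. What cancels $L_e$ is $t_{e+1}^p$: since $p(c_{e+1}-1)=c_e$, one has $t_{e+1}^p\equiv L_e \pmod{p^{c_e+1}S^+}$. The cross term instead produces the next leading term $-L_{e+1}$, via your rewriting $y_e^{p-1}(\phi^{-(e+1)}(f))^{p^e-1}\equiv(\phi^{-(e+1)}(f))^{p^{e+1}-1}$ modulo $p^{1/p}S^+$. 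Your earlier sentence, pairing the cross term of the $i$-th summand with the $p$-th power of the $(i+1)$-st, is the correct picture; the induction paragraph contradicts it.

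\textbf{The error bound is too weak.} More seriously, an error known only to lie in $p^{c_e+\epsilon}S^+$ for some unspecified $\epsilon>0$ passes unchanged into $f-y_{e+1}^p-L_{e+1}$. To continue you need $\epsilon>1/p^{e+1}$, and your hypothesis does not provide this.

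\textbf{The repair.} Fix the modulus once and for all. Show $f-y_e^p\equiv L_e \pmod{p^{1+2/p}S^+}$ for every $e\ge 0$, starting from $y_0=\phi^{-1}(f)$ and $L_0=p\,\phi^{-1}(\delta(f))$, where it is an equality. In the inductive step the new errors are:
\begin{enumerate}
\item[(a)] $t_{e+1}^p-L_e$, which lies in $p^{c_e+1}S^+$;
\item[(b)] the cross term plus $L_{e+1}$, which lies in $p^{c_{e+1}+1/p}S^+$;
\item[(c)] the remaining binomial terms, which lie in $p^{1+2(c_{e+1}-1)}S^+$.
\end{enumerate}
All three are contained in $p^{1+2/p}S^+$. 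Since $1+2/p\ge p/(p-1)>c_e$, this yields $f-y_e^p\in p^{c_e}S^+$. A minor point: $(p^{1/p}\phi^{-2}(\delta(f)))^p=p\,\phi^{-1}(\delta(f))$ holds only modulo $p^2$, which is harmless because $c_e<2$.

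\textbf{Comparison with the paper.} Once repaired, your proof is a reorganization of the paper's. The paper proves $(f^{1/p}-\alpha_e)^p\equiv -p(\phi^{-1}(f))^{p-1}(f^{1/p}-\alpha_{e-1}) \pmod{p^{1+2/p}S^+}$. This is the same telescoping cancellation modulo the same ideal. It then takes a $p$-th root at each stage, using the already established divisibility of $f^{1/p}-\alpha_{e-1}$, so no bootstrap is needed. You do all the cancellation inside the explicit ring $S_\infty[p^{1/p^\infty}]$ and extract the root once at the end, at the cost of the bootstrap in Step 2.
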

\begin{proof}
	If $p=2$, then we have
\begin{align*}
	(f^{\frac{1}{2}}-\phi^{-1}(f))^2 &=f-2f^{\frac{1}{2}}\phi^{-1}(f)+(\phi^{-1}(f))^2 \\
	&= -2\phi^{-1}(\delta(f))+2f^{\frac{1}{2}}(f^{\frac{1}{2}}-\phi^{-1}(f)).
\end{align*}
	If $p\ge 3$, then there exists $g\in S^+$ such that
\begin{align*}
	(f^{\frac{1}{p}}-\phi^{-1}(f))^p & = f-(\phi^{-1}(f))^p+p(f^{\frac{1}{p}}-\phi^{-1}(f))g\\
	& =p\phi^{-1}(\delta(f))+p(f^{\frac{1}{p}}-\phi^{-1}(f))g.
\end{align*}
Hence, in either case, $f^{\frac{1}{p}}-\phi^{-1}(f)\in p^{1/p}S^+$. Since
\begin{align*}
	(f^{\frac{1}{p}}-\phi^{-1}(f)-p^{\frac{1}{p}}\phi^{-2}(\delta(f)))^p &\equiv p\phi^{-1}(\delta(f))-p(\phi^{-2}(\delta(f)))^p \\
&\equiv 0 \pmod{p^{1+\frac{1}{p}}S^+},
\end{align*}
 we have
\[
f^{\frac{1}{p}}-\phi^{-1}(f)-p^{\frac{1}{p}}\phi^{-2}(\delta(f))\in p^{\frac{1}{p}+\frac{1}{p^2}}S^+.
\]
This shows the case where $e=1$.
Let
\[
	\alpha_e:=\phi^{-1}(f)+\sum_{i=1}^{e} (-1)^{i-1}p^{\frac{1}{p}+\dots+\frac{1}{p^i}} (\phi^{-i}(f))^{p^{i-1}-1}\phi^{-(i+1)}(\delta(f)).
\]
It is enough to show the following claim.
\begin{cl}
For $e \ge 1$,
\[
	(f^{\frac{1}{p}}-\alpha_e)^p \equiv -p(\phi^{-1}(f))^{p-1}(f^{\frac{1}{p}}-\alpha_{e-1}) \pmod {p^{1+\frac{2}{p}} S^+}
\]
and $f^{\frac{1}{p}}-\alpha_e\in p^{\frac{1}{p}+\dots+\frac{1}{p^{e+1}}}S^+$.
\end{cl}
\begin{clproof}
	We show the claim by induction on $e$. Suppose that $e=1$. Since
\[
(x-y)^p-(x^p-y^p)\equiv -py^{p-1}(x-y) \pmod {p(x-y)^2\Z[x,y]}
\]
in $\Z[x,y]$,
we obtain
	\begin{align*}
	(f^{\frac{1}{p}}-\alpha_1)^p&=(f^{\frac{1}{p}}-\phi^{-1}(f)-p^{\frac{1}{p}}\phi^{-2}(\delta(f)))^p \\
&\equiv f-(\phi^{-1}(f))^p-p(\phi^{-1}(f))^{p-1}(f^{\frac{1}{p}}-\phi^{-1}(f)) -p\phi^{-1}(\delta(f)) \\
&\equiv -p(\phi^{-1}(f))^{p-1}(f^{\frac{1}{p}}-\phi^{-1}(f)) \pmod{p^{1+\frac{2}{p}}S^+}.
	\end{align*}
Since $f^{\frac{1}{p}}-\phi^{-1}(f)\in p^{1/p}S^+$, we see that $f^{1/p}-\alpha_1\in p^{1/p+1/p^2}S^+$,
	which shows the case where $e=1$. Suppose that $e\ge 2$.
	By induction hypothesis, we see that
	\begin{align*}
		(f^{\frac{1}{p}}-\alpha_{e})^p &= \left(f^{\frac{1}{p}}-\alpha_{e-1}-(-1)^{e-1}p^{\frac{1}{p}+\dots+\frac{1}{p^e}}(\phi^{-e}(f))^{p^{e-1}-1}\phi^{-(e+1)}(\delta(f))\right)^p \\
		& \equiv -p(\phi^{-1}(f))^{p-1}(f^{\frac{1}{p}}-\alpha_{e-2}) \\ &+(-1)^{e-2}p^{1+\frac{1}{p}+\dots+\frac{1}{p^{e-1}}}(\phi^{-1}(f))^{p-1}(\phi^{-(e-1)}(f))^{p^{e-2}-1}\phi^{-e}(\delta(f)) \\
        &\pmod {p^{1+\frac{2}{p}}S^+} \\
		&= -p(\phi^{-1}(f))^{p-1}(f^{\frac{1}{p}}-\alpha_{e-1}).
	\end{align*}
	Since $f^{\frac{1}{p}}-\alpha_{e-1}\in p^{\frac{1}{p}+\dots+\frac{1}{p^e}}S^+$, we obtain $f^{\frac{1}{p}}-\alpha_e\in p^{\frac{1}{p}+\dots+\frac{1}{p^{e+1}}}S^+$.
\end{clproof}
\end{proof}

\begin{prop} \label{remark p-root formula}
	Let $S=\Z_p\llbracket x_1,\dots, x_n \rrbracket$ and $f\in S$. Suppose that $p, f$ is a regular sequence of $S$. Then there exist $\gamma_j\in S^+$ for $j=1,\dots, p$ such that 
\begin{align*}
	f^{\frac{1}{p}}&=\phi^{-1}(f)+\sum_{i=1}^{e} (-1)^{i-1}p^{\frac{1}{p}+\dots+\frac{1}{p^i}} (\phi^{-i}(f))^{p^{i-1}-1}\phi^{-(i+1)}(\delta(f)) \\
&+p^{\frac{1}{p}+\dots+\frac{1}{p^{e+1}}}(\phi^{-2}(f))^{p-1}\gamma_1+\sum_{j=2}^{p}p^{\frac{1}{p}+\frac{j}{p^2}}(\phi^{-2}(f))^{p-j}\gamma_j.
\end{align*}
\end{prop}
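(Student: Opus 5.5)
The plan is to refine the error term $p^{\frac1p+\dots+\frac1{p^{e+1}}}\beta_e$ of Lemma \ref{p-root formula} by tracking how the recursion in the Claim interacts with powers of $a:=\phi^{-2}(f)$. The starting identity is
\[
\phi^{-1}(f)=(\phi^{-2}(f))^p+p\,\phi^{-2}(\delta(f))=a^p+p\,\phi^{-2}(\delta(f)),
\]
which is $\phi^{-2}$ applied to the definition of $\delta$. It gives $(\phi^{-1}(f))^{p-1}\equiv a^{p(p-1)}\pmod{pS^+}$. Write $E_e:=f^{1/p}-\alpha_e$, where $\alpha_e$ is as in the proof of Lemma \ref{p-root formula}. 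The Claim there gives
\[
E_e^p\equiv -p\,(\phi^{-1}(f))^{p-1}E_{e-1}\pmod{p^{1+\frac2p}S^+}.
\]
Substituting the congruence for $(\phi^{-1}(f))^{p-1}$, and using $E_{e-1}\in p^{1/p}S^+$, I would rewrite this as
\[
E_e^p\in p\,a^{p(p-1)}E_{e-1}S^++p^{1+\frac2p}S^+ .
\]

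The target ideal is $p^{1/p}\bigl(a^{p-1}\cdot p^{\frac1{p^2}+\dots+\frac1{p^{e+1}}},\ a^{p-j}p^{j/p^2}\ (2\le j\le p)\bigr)S^+$. The second family of generators consists of $p^{1/p}$ times the degree-$p$ monomials in $a$ and $b:=p^{1/p^2}$ with $b$-degree at least $2$. So the expected shape is
\[
E_e\in p^{1/p}\bigl(a^{p-1}\,p^{\frac1{p^2}+\dots+\frac1{p^{e+1}}}\bigr)S^++p^{1/p}\,b^2(a,b)^{p-2}S^+ .
\]
I would prove this by induction on $e$, in parallel with the Claim. For the base case I would compute $E_1^p$ explicitly, as in the Claim, keeping the $a$-powers.

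The main step, and the main obstacle, is extracting $p$-th roots. One needs a statement of the form: if $y\in S^+$ and $y^p\in p\,a^{p(p-1)}c\,S^++p^{1+2/p}S^+$ with $c\in p^{s}S^+$, then $y\in p^{1/p}a^{p-1}p^{s/p}S^++p^{1/p}b^2(a,b)^{p-2}S^+$. What I would try first is to pass to a finite normal extension $T\subseteq S^+$ of $S$ containing all relevant roots. There I would use that $T$ is normal and that $p^{1/p^2},a$ behave like a regular sequence, which is exactly where the hypothesis that $p,f$ is a regular sequence enters. For two elements forming a regular sequence in a normal (even Cohen--Macaulay after enlarging $T$, or using that $S^+$ is big Cohen--Macaulay by \cite{Bhatt21}) ring, the integral closure of a monomial ideal in $(a,b)$ is the expected monomial ideal. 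So $y^p\in(\text{monomials})^p$ forces $y$ into the integral closure of the monomial ideal.

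Since integral closure is only available up to $\epsilon$-losses in $S^+$, I would absorb those losses using the room between the exponent $1+2/p$ and the exponents actually needed. Membership in $p^{1/p}b^2(a,b)^{p-2}$ only requires $p$-exponent $\frac1p+\frac{j}{p^2}$ at $a$-degree $p-j$. Finally I would collect the generators into the elements $\gamma_1,\dots,\gamma_p$, which completes the induction.
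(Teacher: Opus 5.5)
Your $p$-th root lemma is false for $p\ge 3$. The issue is not an $\epsilon$-loss. The error term $p^{1+\frac2p}S^+$ in the Claim of Lemma \ref{p-root formula} carries no power of $a=\phi^{-2}(f)$. In the target ideal $p^{N_e}a^{p-1}S^++p^{1/p}b^2(a,b)^{p-2}S^+$, where $N_e:=\frac1p+\dots+\frac1{p^{e+1}}$, every generator has a factor of $a$ except $p^{2/p}$ (the case $j=p$). The $p$-th power of $p^{2/p}$ is $p^2$, whose exponent exceeds $1+\frac2p$.

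Concretely, $c=0$ and $y=p^{\frac1p+\frac2{p^2}}$ satisfy your hypothesis. Take $f=x_1$ and a map $S^+\to T:=(S/x_1S)^+$ over $S\to S/x_1S$. It kills $a=x_1^{1/p^2}$, so your target ideal maps into $p^{2/p}T$. That ideal does not contain $p^{\frac1p+\frac2{p^2}}$, because $\frac2{p^2}<\frac1p$. So no integral-closure or regular-sequence argument can rescue the lemma. The target is strictly smaller than $(p^{\frac1p+\frac sp}a^{p-1},\,p^{\frac1p+\frac2{p^2}})S^+$, whose generators' $p$-th powers are exactly your hypothesis; there is a deficit, not room. (For $p=2$ the target is $(2^{N_e}a,2)S^+$ and the lemma holds.)

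The root extraction itself needs none of that machinery. Since $S^+$ is normal and contains all $p$-th roots, Frobenius $S^+/p^{1/p}S^+\to S^+/pS^+$ is bijective. Hence $y^p\in(c_1^p,\dots,c_m^p)S^+$ with all $c_i\in p^{1/p}S^+$ gives $y\in(c_1,\dots,c_m)S^++p^{2/p}S^+$, and $p^{2/p}$ is precisely the $j=p$ generator.

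What is missing is a refined Claim:
$E_e^p+p(\phi^{-1}(f))^{p-1}E_{e-1}\in\mathcal E:=(p^{1+\frac jp}a^{p(p-j)}\mid 2\le j\le p)S^+$.
You keep $a$-powers only in the base case, but they must be tracked at every step. Write $\alpha_e=\phi^{-1}(f)+t_1+\dots+t_e$, with $t_i$ the $i$-th correction term. Expand via $(x-\alpha)^p=x^p-\alpha^p-\sum_{i=1}^{p-1}\binom pi(x-\alpha)^i\alpha^{p-i}$, and use $\phi^{-1}(f)=a^p+p\phi^{-2}(\delta(f))$, so that $\alpha_e\in(a^p,p^{1/p})S^+$. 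Then every term carrying the coefficient $p$ and at least two factors from $E_e$ or $t_1+\dots+t_e$ lies in $\mathcal E$. One checks $t_1^p\equiv p\phi^{-1}(\delta(f))$ and $t_i^p\equiv -p(\phi^{-1}(f))^{p-1}t_{i-1}$ modulo $p^2S^+$ for $i\ge 2$. Given this, $E_{e-1}\in p^{N_{e-1}}S^+$ (from Lemma \ref{p-root formula}, or $E_0=f^{1/p}-\phi^{-1}(f)\in p^{1/p}S^+$ when $e=1$) yields $E_e^p\in p^{1+N_{e-1}}a^{p(p-1)}S^++\mathcal E$. The Frobenius argument then finishes, and this route does not even use that $p,f$ is regular.

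The paper proceeds differently. It tracks $a$-powers only once, obtaining $f^{1/p}=\phi^{-1}(f)+p^{1/p}\phi^{-2}(\delta(f))+\sum_{j=1}^{p}p^{\frac1p+\frac j{p^2}}a^{p-j}\gamma'_j$, and compares this with Lemma \ref{p-root formula}. It then uses that $p,f$ is part of a system of parameters, hence a regular sequence on the BCM algebra $\widehat{S^+}^p$, to move the factor $a^{p-1}$ onto $p^{N_e}\beta_e$. Finally it approximates back into $S^+$, absorbing the discrepancy into $\gamma_p$. That is where the hypothesis enters, not in extracting $p$-th roots.
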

\begin{proof}
	If $p=2$, then
	\[
	(f^{\frac{1}{2}}-\phi^{-1}(f)-2^{\frac{1}{2}}\phi^{-2}(\delta(f)))^2 \equiv 2\phi^{-1}(f)(f^{\frac{1}{2}}-\phi^{-1}(f)) \pmod{4S^+}.
	\]
If $p\ge 3$, we have
\begin{align*}
	&(f^{\frac{1}{p}}-\phi^{-1}(f)-p^{\frac{1}{p}}\phi^{-2}(\delta(f)))^p \\
    &\equiv f-(\phi^{-1}(f))^p+\sum_{i=1}^{p-1}(-1)^{p-i}\binom{p}{i}f^{\frac{i}{p}}(\phi^{-1}(f))^{p-i}-p\phi^{-1}(\delta(f)) \\
& =\sum_{i=1}^{p-1}(-1)^{p-i}\binom{p}{i}f^{\frac{i}{p}}(\phi^{-1}(f))^{p-i}\\
& \pmod{p^2S^+}.
\end{align*}
Hence, in either case,
\begin{align*}
&(f^{\frac{1}{p}}-\phi^{-1}(f)-p^{\frac{1}{p}}\phi^{-2}(\delta(f)))^p \\
&\in \left(p^2, p(f^{\frac{1}{p}}-\phi^{-1}(f))^i(\phi^{-1}(f))^{p-i}\middle| 1\le i \le p-1\right)S^+ \\
& \subseteq \left(p^{1+\frac{i}{p}}(\phi^{-1}(f))^{p-i}\middle| 1\le i \le p\right)S^+.
\end{align*}
Hence, there exist $\gamma'_i\in S^+$ for $i=1,\dots, p$ such that
\[
	f^{\frac{1}{p}}=\phi^{-1}(f)+p^{\frac{1}{p}}\phi^{-2}(\delta(f))+\sum_{i=1}^{p}p^{\frac{1}{p}+\frac{i}{p^2}}(\phi^{-2}(f))^{p-i}\gamma'_i.
\]
By Lemma \ref{p-root formula}, for any $e\ge 1$, there exists $\beta_e\in S^+$ such that 
\begin{align*}
	&\sum_{i=2}^{e} (-1)^{i-1}p^{\frac{1}{p}+\dots+\frac{1}{p^i}} (\phi^{-i}(f))^{p^{i-1}-1}\phi^{-(i+1)}(\delta(f))+p^{\frac{1}{p}+\dots+\frac{1}{p^{e+1}}}\beta_e\\
&=\sum_{j=1}^{p}p^{\frac{1}{p}+\frac{j}{p^2}}(\phi^{-2}(f))^{p-j}\gamma'_j.
\end{align*}
Hence, 
\begin{align*}
p^{\frac{1}{p}+\dots+\frac{1}{p^{e+1}}}\beta_e -\sum_{j=2}^{p}p^{\frac{1}{p}+\frac{j}{p^2}}(\phi^{-2}(f))^{p-j}\gamma'_j\in (p,(\phi^{-2}(f))^{p-1})S^+.
\end{align*}
Since the $p$-adic completion $\widehat{S^+}^p$ of $S^+$ is a BCM $S$-algebra by \cite[Corollary 5.17]{Bhatt21} and $p, f$ is a part of a  system of parameters for $S$, there exists $\tilde{\gamma_1}\in \widehat{S^+}^p$ such that
\begin{align*}
	p^{\frac{1}{p}+\dots+\frac{1}{p^{e+1}}}\beta_e =p^{\frac{1}{p}+\dots+\frac{1}{p^{e+1}}}(\phi^{-2}(f))^{p-1}\tilde{\gamma_1} + \sum_{j=2}^{p}p^{\frac{1}{p}+\frac{j}{p^2}}(\phi^{-2}(f))^{p-j}\gamma'_j
\end{align*}
in $\widehat{S^+}^p$. Let $\gamma_j=\gamma'_j$ for $2\le j \le p-1$. We can take $\gamma_1, \gamma_p \in S^+$ such that
\begin{align*}
	p^{\frac{1}{p}+\dots+\frac{1}{p^{e+1}}}\beta_e =p^{\frac{1}{p}+\dots+\frac{1}{p^{e+1}}}(\phi^{-2}(f))^{p-1}\gamma_1 + \sum_{j=2}^{p}p^{\frac{1}{p}+\frac{j}{p^2}}(\phi^{-2}(f))^{p-j}\gamma_j
\end{align*}
in $S^+$, which completes the proof.
\end{proof}

\begin{thm} \label{p^e-root formula}
		Let $S=\Z_p\llbracket x_1,\dots,x_n\rrbracket$ and $f\in S$. Suppose that $p, f$ is a regular sequence of $S$. Then, for any $e,e'\ge 1$ and for $j=1,\dots,p$, there exists $\gamma_{e,j}\in S^+$ such that 
\begin{align*}
	f^{\frac{1}{p^{e'}}}= &\phi^{-e'}(f)+\sum_{i=1}^{e} (-1)^{i-1}p^{\frac{1}{p^{e'}}+\dots+\frac{1}{p^{e'+i-1}}} (\phi^{-(e'+i-1)}(f))^{p^{i-1}-1}\phi^{-(e'+i)}(\delta(f)) \\
&+p^{\frac{1}{p^{e'}}+\dots+\frac{1}{p^{e'+e}}}(\phi^{-(e'+1)}(f))^{p-1}\gamma_{e,1} + \sum_{j=2}^{p}p^{\frac{1}{p^{e'}}+\frac{j}{p^{e'+1}}}(\phi^{-(e'+1)}(f))^{p-j}\gamma_{e,j}
\end{align*}
in $S^+$.
\end{thm}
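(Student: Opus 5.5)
The plan is to reduce the statement to the case $e'=1$, which is exactly Proposition \ref{remark p-root formula} (with $\gamma_{e,j}:=\gamma_j$), by induction on $e'$. The roles of $p$ and $p^{1/p}$ get replaced by $p^{1/p^{e'-1}}$ and $p^{1/p^{e'}}$. A naive transport of structure does not work. The endomorphism $\phi^{-(e'-1)}$ of $S_\infty$ extends to an endomorphism of $S^+$, but it fixes $\Z_p$. It therefore cannot convert the exponents $\frac1p+\cdots$ into $\frac1{p^{e'}}+\cdots$. So I will rerun the arguments of Lemma \ref{p-root formula} and Proposition \ref{remark p-root formula} directly, with $f$ replaced by $f^{1/p^{e'-1}}$.

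\textbf{Step 1 (first approximation).} By the induction hypothesis (only its crude form is needed), $h:=f^{1/p^{e'-1}}$ satisfies
\[
h\equiv \phi^{-(e'-1)}(f)+p^{\frac{1}{p^{e'-1}}}\phi^{-e'}(\delta(f)) \pmod{p^{\frac1{p^{e'-1}}+\frac1{p^{e'}}}S^+}.
\]
Put $a:=\phi^{-e'}(f)$, so that $a^p=\phi^{-(e'-1)}(f)$. Expanding $(h^{1/p}-a)^p$ gives $h-a^p$ plus terms divisible by $p$, and $h-a^p$ is divisible by $p^{1/p^{e'-1}}$. Since $1/p^{e'-1}\le 1$, we get $h^{1/p}-a\in p^{1/p^{e'}}S^+$.

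Subtracting also $p^{1/p^{e'}}\phi^{-(e'+1)}(\delta(f))$, whose $p$-th power is $\equiv p^{1/p^{e'-1}}\phi^{-e'}(\delta(f))$ modulo $p$, pushes the error into $p^{\frac1{p^{e'}}+\frac1{p^{e'+1}}}S^+$. This gives the $i=1$ term of the formula.

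\textbf{Step 2 (the higher terms).} Let $\alpha_e$ be the truncated sum in the statement (the first two groups of terms). I will prove the analogue of the Claim in Lemma \ref{p-root formula}:
\[
(h^{1/p}-\alpha_e)^p\equiv -p^{\frac1{p^{e'-1}}}a^{p-1}(h^{1/p}-\alpha_{e-1}) \pmod{p^{\frac1{p^{e'-1}}+\frac{2}{p^{e'}}}S^+}.
\]
The proof uses $(x-y)^p-(x^p-y^p)\equiv -py^{p-1}(x-y)$ modulo $p(x-y)^2$. The point is that the correction term of $h$ at level $e'-1$ now plays the role that $p\,\phi^{-1}(\delta(f))$ played before. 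Inductively this gives $h^{1/p}-\alpha_e\in p^{\frac1{p^{e'}}+\cdots+\frac1{p^{e'+e}}}S^+$; call the residue $p^{\frac1{p^{e'}}+\cdots+\frac1{p^{e'+e}}}\beta_e$.

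Separately, as in the first half of the proof of Proposition \ref{remark p-root formula}, I will bound $(h^{1/p}-a-p^{1/p^{e'}}\phi^{-(e'+1)}(\delta(f)))^p$ by the binomial cross terms. Writing $b:=\phi^{-(e'+1)}(f)$, so that $b^p=a$, these lie in the ideal $(p^{\frac1{p^{e'-1}}+\frac{i}{p^{e'}}}a^{p-i})_{1\le i\le p}$. Taking $p$-th roots then gives the expression $\sum_{j=1}^{p}p^{\frac1{p^{e'}}+\frac j{p^{e'+1}}}b^{p-j}\gamma'_j$.

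\textbf{Step 3 (merging).} Comparing the two expressions shows that $p^{\frac1{p^{e'}}+\cdots+\frac1{p^{e'+e}}}\beta_e-\sum_{j\ge2}(\cdots)$ is divisible by $p^{\frac1{p^{e'}}+\frac1{p^{e'+1}}}b^{p-1}$ modulo a higher power of $p$. I then conclude exactly as in Proposition \ref{remark p-root formula}. Since $p,f$ is part of a system of parameters, so are $p^{1/p^{N}}$ and $b^{p-1}$ in a suitable finite extension of $S$ inside $S^+$. The $p$-adic completion $\widehat{S^+}^p$ is BCM by \cite[Corollary 5.17]{Bhatt21}, so the colon relation lets us factor out $b^{p-1}$. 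The resulting $\gamma_{e,1}$ and $\gamma_{e,p}$ are then moved back into $S^+$, with $\gamma_{e,j}:=\gamma'_j$ for $2\le j\le p-1$.

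The main obstacle is the bookkeeping in Step 2. One must check that every error term produced by the binomial expansion really lies in $p^{\frac1{p^{e'-1}}+\frac2{p^{e'}}}S^+$, now that the "$p$" coming from $h-a^p$ is only $p^{1/p^{e'-1}}$ while the binomial coefficients still contribute a full $p$. I would first verify this for $e'=2$, $e=2$ by hand, and then formalize the induction.
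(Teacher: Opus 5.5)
There is a gap in Step~2, and it comes from missing the one idea that makes this theorem short.

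\textbf{The gap.} For $e'\ge 2$, the identity $(x-y)^p-(x^p-y^p)\equiv -py^{p-1}(x-y)$ that you cite only produces terms divisible by a full $p$. But $p\in p^{\frac{1}{p^{e'-1}}+\frac{2}{p^{e'}}}S^+$, since $\frac{1}{p^{e'-1}}+\frac{2}{p^{e'}}\le 1$, so at the precision of your recursion these terms vanish. The term $-p^{1/p^{e'-1}}a^{p-1}(h^{1/p}-\alpha_{e-1})$ would therefore have to come out of $h-\alpha_e^p$. That requires knowing $h$ modulo $p^{\frac{1}{p^{e'-1}}+\frac{2}{p^{e'}}}$. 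The level-$(e'-1)$ formula does not give this: for any truncation, its $\gamma_{\cdot,1}$-remainder has $p$-exponent $\frac{1}{p^{e'-1}}+\frac{1}{p^{e'}}+\cdots+\frac{1}{p^{e'+E-1}}$, which stays strictly below $\frac{1}{p^{e'-1}}+\frac{2}{p^{e'}}$. So your recursion is unproved, and neither ingredient you mention supplies it.

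Two smaller points:
\begin{itemize}
\item The equality $a^p=\phi^{-(e'-1)}(f)$ is false. What holds is $\phi^{-(e'-1)}(f)=a^p+p\,\phi^{-e'}(\delta(f))$, so it is only a congruence mod $p$.
\item In the second half of Step~2, the ideal $(p^{\frac{1}{p^{e'-1}}+\frac{i}{p^{e'}}}a^{p-i})_i$ does not come from binomial cross terms, which are all negligible. It comes from the induction hypothesis.
\end{itemize}

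\textbf{The missing idea.} You partly use it in Step~1, and the paper uses it for the whole proof. You were right that $\phi^{-(e'-1)}$ cannot transport the formula, because it fixes $\Z_p$. The right transport is the power map $x\mapsto x^{p^{e'-1}}$. It has three properties:
\begin{itemize}
\item It is additive on $S^+/pS^+$.
\item It sends $p^{\frac{1}{p^{e'}}+\cdots+\frac{1}{p^{e'+i-1}}}$ to $p^{\frac{1}{p}+\cdots+\frac{1}{p^{i}}}$.
\item For $g\in S$, it sends $\phi^{-m}(g)$ to an element congruent to $\phi^{-(m-e'+1)}(g)$ modulo $pS^+$, because $g^{p^{k}}\equiv \phi^{k}(g)\pmod{pS}$.
\end{itemize}

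The paper's proof is then as follows. Put $\gamma_{e,j}:=\gamma_j^{1/p^{e'-1}}$, with $\gamma_j$ taken from Proposition~\ref{remark p-root formula}, and let $\alpha_{e,e'}$ be the resulting right-hand side. Then $(f^{1/p^{e'}}-\alpha_{e,e'})^{p^{e'-1}}$ is congruent modulo $pS^+$ to $f^{1/p}$ minus the $e'=1$ expression, which is $0$. Hence $f^{1/p^{e'}}-\alpha_{e,e'}\in p^{1/p^{e'-1}}S^+\subseteq p^{2/p^{e'}}S^+$. This error is absorbed into the $j=p$ term, whose coefficient is $p^{\frac{1}{p^{e'}}+\frac{p}{p^{e'+1}}}=p^{2/p^{e'}}$.

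No recursion in $e$ and no second colon-capturing argument is needed. Within your inductive setup, the same argument applied to the full level-$(e'-1)$ formula, including its $\gamma$-terms, replaces Steps~2 and~3 entirely.
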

\begin{proof}
	Let 
\begin{align*}
	&\alpha_{e,e'} \\&:= \phi^{-e'}(f)+\sum_{i=1}^{e} (-1)^{i-1}p^{\frac{1}{p^{e'}}+\dots+\frac{1}{p^{e'+i-1}}} (\phi^{-(e'+i-1)}(f))^{p^{i-1}-1}\phi^{-(e'+i)}(\delta(f)) \\
&+p^{\frac{1}{p^{e'}}+\dots+\frac{1}{p^{e'+e}}}(\phi^{-(e'+1)}(f))^{p-1}(\gamma_{1}^{(e)})^{\frac{1}{p^{e'-1}}} + \sum_{j=2}^{p}p^{\frac{1}{p^{e'}}+\frac{j}{p^{e'+1}}}(\phi^{-(e'+1)}(f))^{p-j}(\gamma_{j}^{(e)})^{\frac{1}{p^{e'-1}}},
\end{align*}
where $\gamma_j^{(e)}$ is defined as $\gamma_j$ in Proposition \ref{remark p-root formula}.
For any $g\in S$, we see that $g^{p^{e'-1}}-\phi^{e'-1}(g)\in pS$. Hence, $(\phi^{-(e'-1)}(g))^{p^{e'-1}}-g\in pS^+$. Therefore, we have
\begin{align*}
	&(f^{\frac{1}{p^{e'}}}-\alpha_{e,e'})^{p^{e'-1}} \\
	&\equiv f^{\frac{1}{p}}-\phi^{-1}(f)-\sum_{i=1}^{e} (-1)^{i-1}p^{\frac{1}{p}+\dots+\frac{1}{p^i}} (\phi^{-i}(f))^{p^{i-1}-1}\phi^{-(i+1)}(\delta(f))\\
&\quad -p^{\frac{1}{p}+\dots+\frac{1}{p^{e+1}}}(\phi^{-2}(f))^{p-1}\gamma_1^{(e)}-\sum_{j=2}^{p}p^{\frac{1}{p}+\frac{j}{p^2}}(\phi^{-2}(f))^{p-j}\gamma_j^{(e)} \pmod {pS^+}\\
& = 0 .
\end{align*}
Hence, $f^{\frac{1}{p^{e'}}}-\alpha_{e,e'}\in p^{\frac{1}{p^{e'-1}}}S^+$, which completes the proof.
\end{proof}

\section{Fermat-type singularities in mixed characteristic}
In this section, we study BCM-regularity of Fermat-type hypersurfaces in mixed characteristic.
\begin{setting}\label{setting Fermat type}
	Let $p$ be a prime number, $n\ge2$ and $d$ be positive integers, and $R:=\Z_p\llbracket x_0,\dots, x_n\rrbracket/(x_0^d+\dots+x_n^d)$.
\end{setting}
\begin{ques}
	When is $R$ BCM-regular?
\end{ques}
\subsection{Positive results}
This subsection is devoted to cases in which Fermat-type hypersurfaces are BCM-regular.

	First, we prove a lemma that will be needed later on.
\begin{lem}
	Let $p$ be a prime number, $n,d_1,\dots,d_n$ be positive integers, let $S:=\Z_p\llbracket x_1,\dots,x_n \rrbracket$ and let $f:=x_1^{d_1}+\dots+x_n^{d_n}$. Suppose that $s$ and $t$ are positive integers such that $0\le s,t< p$. Let $C_{m_1,\dots,m_n}$ denote the coefficient of $x_1^{m_1d_1}\cdots x_n^{m_nd_n}$ in $(\delta(f))^sf^t$ for non-negative integers $m_1,\dots,m_n$. If $m_1+\dots+m_n=sp+t$ and $m_i< p$ for any $i$, then $C_{m_1,\dots,m_n}$ is coprime to $p$.
\end{lem}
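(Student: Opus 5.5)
The idea is to substitute $y_i:=x_i^{d_i}$ and reduce everything to a multinomial coefficient. All the expressions involved are polynomials in the $y_i$, and $y^m\mapsto x_1^{m_1d_1}\cdots x_n^{m_nd_n}$ is injective on monomials. So $C_{m_1,\dots,m_n}$ is the coefficient of $y_1^{m_1}\cdots y_n^{m_n}$ in $\delta(f)^sf^t$, viewed in $\Z[y_1,\dots,y_n]$. There we have $f=\sum_i y_i$ and $\phi(f)=\sum_i y_i^p$, hence
\[
p\,\delta(f)=\sum_i y_i^p-\Big(\sum_i y_i\Big)^p .
\]

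First, I would expand using the binomial theorem in $\Q[y_1,\dots,y_n]$:
\[
\delta(f)^sf^t=p^{-s}\sum_{j=0}^{s}\binom{s}{j}(-1)^{s-j}\Big(\sum_i y_i^p\Big)^j\Big(\sum_i y_i\Big)^{p(s-j)+t}.
\]
For $j\ge1$, every monomial of $(\sum_i y_i^p)^j$ has some exponent $\ge p$. Multiplying by a polynomial with nonnegative coefficients only raises exponents, so these terms contain no monomial with all $m_i<p$. Hence only $j=0$ contributes, and
\[
C_{m_1,\dots,m_n}=(-1)^s p^{-s}\,\frac{(sp+t)!}{m_1!\cdots m_n!},
\]
using $m_1+\dots+m_n=sp+t$.

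Second, I would compute $p$-adic valuations. Since $0\le s,t<p$, we have $sp+t<p^2$, so Legendre's formula gives $v_p((sp+t)!)=\lfloor (sp+t)/p\rfloor=s$. Since each $m_i<p$, $v_p(m_i!)=0$. Therefore the multinomial coefficient has valuation exactly $s$, and $C_{m_1,\dots,m_n}$ is an integer coprime to $p$.

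There is no serious obstacle. The only points needing care are the monomial-support argument that kills the $j\ge1$ terms, and the remark that the identity holds in $\Z[y]$, so the coefficient is genuinely an integer and is well defined after passing back to the $x_i$.
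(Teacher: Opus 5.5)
Your proposal is correct and is essentially the paper's argument. The paper writes $f^{sp+t}=(\phi(f)-p\delta(f))^s f^t$ and discards the $\phi(f)$-terms modulo the monomial ideal $(x_1^{pd_1},\dots,x_n^{pd_n})$, which is the same as your discarding of the $j\ge 1$ terms; it arrives at the same formula $C_{m_1,\dots,m_n}=(sp+t)!/((-p)^s m_1!\cdots m_n!)$, and your Legendre-formula valuation count simply makes explicit the coprimality that the paper states without comment.
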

\begin{proof}
	We have
	\begin{align*}
		f^{sp+t} &=(\phi(f)-p\delta(f))^{s}f^t\\
			&\equiv (-p)^s(\delta(f))^sf^t \pmod{(x_1^{pd_1},\dots,x_n^{pd_n})}.
	\end{align*}
	and
	\begin{align*}
		f^{sp+t} &= \frac{(sp+t)!}{m_1!\cdots m_n!}x_1^{m_1d_1}\cdots x_n^{m_nd_n} +(\text{other terms}).
	\end{align*}
	Therefore,
	\[
		C_{m_1,\dots,m_n}=\frac{(sp+t)!}{(-p)^sm_1!\dots m_n!},
	\]
	which is coprime to $p$.
\end{proof}

\begin{prop} \label{Fermat type <=p}
	Let the notation be as in Setting \ref{setting Fermat type}. If $d\le n\le p$, then $R$ is BCM-regular.
\end{prop}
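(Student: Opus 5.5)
We will apply Proposition \ref{BCM-regularity and brt} with $m=d$, $S=\Z_p\llbracket x_1,\dots,x_n\rrbracket$ and $f=x_1^d+\dots+x_n^d$. By Proposition \ref{proposition index inequality} (there are $n+1\ge 3$ variables), BCM-regularity for exponent $n$ implies it for every $d\le n$, so we may assume $d=n\le p$. It then suffices to show
\[
f^{\frac{n-1}{n}}\notin (p,x_1,\dots,x_n)B
\]
for every perfectoid BCM $S^+$-algebra $B$. Since $f^{t'}\in (p,\underline{x})B$ implies $f^{t}\in(p,\underline{x})B$ for $t\ge t'$, it is enough to find one exponent $t\ge (n-1)/n$ with $f^t\notin(p,\underline{x})B$. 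We will use a $p$-adic rational $t=(sp+u)/p^2$ with $0\le s,u<p$, chosen as the smallest such number that is $\ge (n-1)/n$.

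Next we expand $f^{t}=(f^{1/p^2})^{sp+u}$ using Theorem \ref{p^e-root formula} with $e'=2$. Modulo $p$, we expect the surviving "main" term to be
\[
p^{\frac{s}{p}}\,\phi^{-2}\bigl(\delta(f)^s f^u\bigr),
\]
up to a unit, coming from the $p^{1/p}\phi^{-2}(\delta f)$-type correction. Rather than compute exact coefficients, we will group all other contributions into terms with higher $p$-adic valuation or with extra factors that lie in $(\underline{x})$. By the coefficient lemma preceding this proposition, for exponents $m_1,\dots,m_n$ with $\sum m_i=sp+u$ and $m_i<p$, the coefficient $C_{m_1,\dots,m_n}$ of $x_1^{m_1n}\cdots x_n^{m_nn}$ in $\delta(f)^sf^u$ is a $p$-adic unit. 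Applying $\phi^{-2}$ turns this monomial into $x_1^{m_1n/p^2}\cdots x_n^{m_nn/p^2}$. We choose the $m_i$ balanced, so that $m_in<p^2$ for all $i$. This is possible because $\sum m_i=sp+u<p^2(n-1)/n+p$, so we need $\lceil (sp+u)/n\rceil<p^2/n$, which holds using $n\le p$. We then get a monomial
\[
p^{\frac{s}{p}}\, x_1^{c_1}\cdots x_n^{c_n},\qquad c_i<1,
\]
with unit coefficient.

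The key fact is that in a BCM algebra, where $p,x_1,\dots,x_n$ is a regular sequence, such a monomial with all exponents $<1$ is not in $(p,\underline{x})B$. This follows by the standard colon-ideal argument: $(p,\underline{x})B:_B p^{s/p}x^{c}\subseteq (p^{1-s/p},x_1^{1-c_1},\dots)B\neq B$.

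The main obstacle is showing that the remaining monomials, and the error terms $\gamma_{e,j}$ from Theorem \ref{p^e-root formula}, cannot cancel this one modulo $(p,\underline{x})B$. We plan to handle this with a multigrading argument. We work in the socle of $H^{n+1}_{(p,\underline{x})}(B)$ and pair against the class $\left[\frac{1}{p^{1-s/p}x_1^{1-c_1}\cdots x_n^{1-c_n}}\right]$. Monomials with a different exponent vector either land in $(p,\underline{x})$ or have some exponent reaching $1$ after multiplication. The error terms carry either $p$-valuation $\ge 1$ or a factor $(\phi^{-3}(f))^{p-j}$ with enough $p$-adic weight. We must check, using $n\le p$, that the valuations of the error terms indeed exceed $s/p$ plus the relevant slack. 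If this bookkeeping fails for some $(s,u)$, we will fall back to $e'=1$ and the simpler exponent $t=(p-1)/p\ge (n-1)/n$. In that case the leading term is $\phi^{-1}(f)^{p-1}$, and we only need a monomial of $f^{p-1}$ with all $m_in<p$.
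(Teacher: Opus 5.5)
Your proposal has a genuine gap for $2\le n<p$, which covers most cases. In the primary route, expanding $f^{t}=(f^{1/p})^{s}(f^{1/p^2})^{u}$ produces the cross terms $\binom{s}{k}p^{k/p}(\phi^{-1}(f))^{s-k}(\phi^{-2}(\delta(f)))^{k}(\phi^{-2}(f))^{u}$ with $k<s$. These have \emph{smaller} $p$-adic valuation than your proposed main term, and in general they are not in $(x_1,\dots,x_n)S^+$, so they cannot be grouped away. For instance, for $(n,p)=(3,5)$ you get $t=17/25$ and $(s,u)=(3,2)$, and the valuation-zero term $\phi^{-2}(\phi(f)^3f^2)$ contains $6x_1^{21/25}x_2^{15/25}x_3^{15/25}$.

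Independently, the coefficient lemma requires $m_i<p$ for all $i$. Together with $\sum m_i=sp+u$ this forces $sp+u\le n(p-1)$. But $sp+u\ge p^2(n-1)/n>n(p-1)$ whenever $2\le n<p$, so the lemma never applies to $\delta(f)^sf^u$ in that range. Your balancing condition $m_in<p^2$ is not the lemma's hypothesis, and the coefficient really can vanish mod $p$: for $p=3$, $n=2$, the coefficient of $x_1^6x_2^4$ in $\delta(f)f^2$ is $-3$. The conclusion $f^t\notin(p,x_1,\dots,x_n)B$ at your $t$ is true, since $t\le (p-1)/p$, but your argument does not establish it.

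Your fallback at $t=(p-1)/p$ only works when $n\mid p-1$. Write $p-1=nQ+r$ with $0\le r<n$. Already $f^{p-r}=f^{nQ+1}\in(x_1^p,\dots,x_n^p)S$, so for $r>0$ the term $(\phi^{-1}(f))^{p-1}$ lies in $(x_1,\dots,x_n)S^+$; examples are $(n,p)=(3,5)$ and $(4,7)$.

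The missing idea, which is the paper's proof, is to take exactly $r$ factors of the correction term $p^{1/p}\phi^{-2}(\delta(f))$. Modulo $(p^{r/p+1/p^2},x_1,\dots,x_n)S^+$ one has $f^{(p-1)/p}\equiv\binom{p-1}{r}p^{r/p}(\phi^{-1}(f))^{p-1-r}(\phi^{-2}(\delta(f)))^{r}$. This holds because a term with fewer correction factors contains $(\phi^{-1}(f))^{p-r}$, and a term with more has valuation at least $r/p+1/p^2$. It then suffices to show $f^{pnQ}\delta(f)^r\notin(p,x_1^{p^2},\dots,x_n^{p^2})S$.
\begin{itemize}
\item Modulo this ideal, $f^{pnQ}$ is a unit times $(x_1\cdots x_n)^{pnQ}$.
\item Since $\sum m_i=pr\le p(n-1)$, the lemma does apply to a balanced monomial of $\delta(f)^r$ with all $m_i<p$. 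Concretely, write $pr=nQ'+r'$ with $0\le r'<n$ and take $m_i\in\{Q',Q'+1\}$.
\item The resulting total exponents are at most $p^2-p+n-r'$, which is $<p^2$ because $n\le p$.
\end{itemize}

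Your primary route at $n=p$ is exactly the case $r=p-1$, and your fallback is the case $r=0$. The intermediate values of $r$ are what the proposal is missing.
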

\begin{proof}
	We may assume that $d=n$. It is enough to show that $\Z_p\llbracket x_0,\dots, x_n\rrbracket /(x_0^p+x_1^n+\dots +x_n^n)$ is BCM-regular. Let $f:=x_1^n+\dots+x_n^n$. By Lemma \ref{p-root formula}, there exists $\beta_2 \in S^+$ such that
	\[
		f^{\frac{1}{p}}=\phi^{-1}(f)+p^{\frac{1}{p}}\phi^{-2}(\delta(f))+p^{\frac{1}{p}+\frac{1}{p^2}}\beta_2.
	\]
	Let $Q$, $r$  be non-negative integers such that $p-1=nQ+r$ and $0\le r <n$. Since $p-r=nQ+1$, we see that
	\[
		f^{p-r} \in  (x_1^p,\dots,x_n^p)S.
	\]
	Hence,
	\[
		f^{\frac{p-1}{p}} \equiv p^{\frac{r}{p}}\binom{p-1}{r}(\phi^{-1}(f))^{p-r-1}(\phi^{-2}(\delta(f)))^r \pmod{(p^{\frac{r}{p}+\frac{1}{p^2}}, x_1,\dots,x_n)S^+}.
	\]
Hence, it is enough to show that
\[
	f^{p(p-r-1)}(\delta(f))^r \notin (p,x_1^{p^2},\dots, x_n^{p^2})S.
\]
Since $p-r-1=nQ$, we obtain
\[
	f^{p(p-r-1)} \equiv \frac{p-r-1}{(n!)^Q}x_1^{npQ}\dots x_n^{npQ} \pmod{(p,x_1^{p^2},\dots,x_n^{p^2})S}.
\]

Let $Q'$, $r'$ be non-negative integers such that $pr=nQ'+r'$ and $0\le r' < n$. Note that $Q'+1<p$ if $r'>0$. Indeed, $n(Q'+1)< p(r+1)\le pn$ if $r'>0$. By the above lemma, there exists an integer $C$ coprime to $p$ such that $(\delta(f))^r=Cx_1^{n(Q'+1)}\cdots x_{r'}^{n(Q'+1)} x_{r'+1}^{nQ'}\cdots x_{n}^{nQ'} + (\text{other terms})$.
Hence, $f^{p(p-r-1)}\delta(f)^r\notin (p,x_1^{p^2},\dots, x_n^{p^2})S$, which completes the proof.
\end{proof}

\begin{prop} \label{Fermat type p^2-1/a}
	Let the notation be as in Setting \ref{setting Fermat type}. Let $a$ be a positive integer such that $1\le a \le p-1$. Suppose that $d\le n$ and
\[
	n=\lfloor \frac{p^2-1}{a}\rfloor.
\]
Then $R$ is BCM-regular.
\end{prop}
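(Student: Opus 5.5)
We first reduce to a single hypersurface with a large exponent on $x_0$. Since $d\le n=\lfloor (p^2-1)/a\rfloor\le p^2-1<p^2$, Proposition \ref{proposition index inequality} (applied with $b_0=p^2$ and $b_i=n$ for $i\ge 1$) shows it suffices to prove that
\[
R':=\Z_p\llbracket x_0,\dots,x_n\rrbracket/(x_0^{p^2}+x_1^n+\dots+x_n^n)
\]
is BCM-regular. Put $f:=x_1^n+\dots+x_n^n\in S=\Z_p\llbracket x_1,\dots,x_n\rrbracket$, and note that $p,f$ is a regular sequence. By Proposition \ref{BCM-regularity and brt} it is enough to show
\[
f^{\frac{p^2-1}{p^2}}\notin (p,x_1,\dots,x_n)B
\]
for every perfectoid BCM $S^+$-algebra $B$. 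Write
\[
f^{\frac{p^2-1}{p^2}}=\bigl(f^{\frac1p}\bigr)^{p-1}\bigl(f^{\frac1{p^2}}\bigr)^{p-1}.
\]
We expand $f^{1/p}$ by Proposition \ref{remark p-root formula} and $f^{1/p^2}$ by Theorem \ref{p^e-root formula} with $e'=2$.

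Next we isolate the candidate leading term. The $p$-adic valuations that can occur are sums of $1/p$'s from the first factor and $1/p^2$'s (with $\phi^{-3}(\delta(f))$ coefficients) from the second. The plan is to take the $p$-power $p^{a/p^2}$ coming from choosing the $\delta$-term exactly $a$ times in $(f^{1/p^2})^{p-1}$. This is possible because $a\le p-1$, and since $a/p^2<1/p$, no $\delta$-term from $f^{1/p}$ can contribute at this level. Modulo $(p^{a/p^2+\epsilon},x_1,\dots,x_n)$, the term is
\[
\binom{p-1}{a}p^{\frac{a}{p^2}}(\phi^{-1}(f))^{p-1}(\phi^{-2}(f))^{p-1-a}(\phi^{-3}(\delta(f)))^a.
\]
The error terms involving $\gamma_{e,j}$ carry a factor $p^{1/p^2+j/p^3}(\phi^{-3}(f))^{p-j}$, and we handle them in the same way as the corresponding terms in Proposition \ref{Fermat type <=p}. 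After applying $\phi^3$ and using that $B$ is BCM, as in the proof of Proposition \ref{Fermat type <=p}, the nonvanishing reduces to showing
\[
f^{p^2(p-1)}f^{p(p-1-a)}(\delta(f))^a\notin(p,x_1^{p^3},\dots,x_n^{p^3})S.
\]
For this we write $p^2-1=na+r$ with $0\le r<n$ and exhibit a monomial $x_1^{nm_1}\cdots x_n^{nm_n}$ with $\sum m_i=p^3-p$ and every $nm_i<p^3$. We would take the $m_i$ roughly equal to $pa$ (more precisely, $p$ times a distribution of $p^2-1$ into $n$ parts of size $a$ or $a+1$), and check that its coefficient is prime to $p$. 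That check combines Lucas' theorem for the $f^{p^2(p-1)}$ and $f^{p(p-1-a)}$ factors with the earlier lemma on coefficients of $(\delta(f))^sf^t$.

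We must also show that the lower-order terms vanish, i.e. that the terms with fewer $\delta$-choices $b<a$, which carry $p^{b/p^2}$, lie in $(x_1^{p^3},\dots,x_n^{p^3})$ modulo $p$. Here a pure degree count fails, since every such term has total degree $p^3-p$ in units of $x_i^n$. Instead we argue digitwise. Modulo $p$ we have $f^{p^2(p-1)}f^{p(p-1-b)}\equiv\phi^2(f)^{p-1}\phi(f)^{p-1-b}$. The monomials of $\phi^2(f)^{p-1}\phi(f)^{p-1-b}\delta(f)^b$ then have exponents $nm_i$ whose base-$p$ digits are controlled, and because $n$ is the maximal value $\lfloor (p^2-1)/a\rfloor$, pigeonhole forces some $nm_i\ge p^3$ when $b<a$. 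This exponent bookkeeping is the main obstacle, and we expect the choice of $n$ as exactly this floor to be what makes both the vanishing for $b<a$ and the nonvanishing at $b=a$ work simultaneously.
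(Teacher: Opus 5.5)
\textbf{The proposal has a genuine gap: your candidate leading term is zero modulo $(x_1,\dots,x_n)S^+$, so the argument fails at its core.}

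Since $a\le p-1$, you have $n=\lfloor (p^2-1)/a\rfloor\ge p+1$. Hence $\phi^{-1}(f)=\sum_i x_i^{n/p}\in(x_1,\dots,x_n)S^+$. The factor $(\phi^{-1}(f))^{p-1}$ in your term $\binom{p-1}{a}p^{a/p^2}(\phi^{-1}(f))^{p-1}(\phi^{-2}(f))^{p-1-a}(\phi^{-3}(\delta(f)))^a$ therefore kills it, just as it kills all your terms with $b<a$.

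Correspondingly, the non-membership you reduce to is false. Modulo $p$ one has $f^{p^2(p-1)}\equiv\phi^2(f)^{p-1}$, and every monomial of $\phi^2(f)^{p-1}$ has some exponent $\ge np^2>p^3$. So $f^{p^2(p-1)}f^{p(p-1-a)}\delta(f)^a\in(p,x_1^{p^3},\dots,x_n^{p^3})S$, and your Lucas/coefficient check can only return $0$.

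In fact, Lemma \ref{p-root formula} gives $f^{1/p}-\phi^{-1}(f)\in p^{1/p}S^+$, so $f^{1/p}\in(p^{1/p},x_1,\dots,x_n)S^+$. Modulo $(x_1,\dots,x_n)S^+$, nothing of $p$-valuation below $(p-1)/p$ survives. The lowest-order term comes from taking $p^{1/p}\phi^{-2}(\delta(f))$ in \emph{every} factor of $(f^{1/p})^{p-1}$.

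Your reduction to $x_0^{p^2}$ is logically valid but asks for more than you need. With the correct leading term $p^{(p-1)/p}(\phi^{-2}(\delta(f)))^{p-1}(\phi^{-2}(f))^{p-1}$ of $f^{(p^2-1)/p^2}$, you would need a monomial $x_1^{nm_1}\cdots x_n^{nm_n}$ of $\delta(f)^{p-1}f^{p-1}$ with $\sum m_i=p^2-1$ and $nm_i<p^2$. Since $\lfloor (p^2-1)/n\rfloor=a$, this forces $na=p^2-1$. That fails, for example, for $p=7$, $a=5$, $n=9$, so this method cannot reach your stronger target in general.

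The paper instead keeps $x_0^n$, so only $f^{(n-1)/n}$ matters, and proves the weaker statement $f^{(p^2-a)/p^2}\notin(p,x_1,\dots,x_n)B$. By Proposition \ref{remark p-root formula} together with $f^{1/p^2}\equiv\phi^{-2}(f)\pmod{p^{1/p^2}S^+}$,
\[
f^{\frac{p^2-a}{p^2}}\equiv p^{\frac{p-1}{p}}(\phi^{-2}(\delta(f)))^{p-1}(\phi^{-2}(f))^{p-a}\pmod{(p^{\frac{p-1}{p}+\frac{1}{p^2}},x_1,\dots,x_n)S^+}.
\]
Set $r:=an-(p^2-p)$; then $r\in[p-a,p-1]$. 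The monomial $x_1^{an}\cdots x_{n-1}^{an}x_n^{(p-r)n}$ of $f^{p-a}\delta(f)^{p-1}$ has all exponents $<p^2$, and all $m_i<p$. So it has unit coefficient by the coefficient lemma preceding Proposition \ref{Fermat type <=p}. Finally, $an<p^2$ gives $(p^2-a)/p^2>(n-1)/n$, and Proposition \ref{BCM-regularity and brt} finishes the proof.
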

\begin{proof}
	We may assume that $d=n$.
	Let $S=\Z_p\llbracket x_1,\dots,x_n\rrbracket$, $f=x_1^n+\dots+x_n^n$ and $r:=an-(p^2-p)$.
	By Proposition \ref{remark p-root formula}, we have
	\begin{align*}
		f^{\frac{1}{p}} \equiv p^{\frac{1}{p}}\phi^{-2}(\delta(f))\pmod{(p^{\frac{1}{p}+\frac{1}{p^2}},x_1,\dots,x_n)S^+}.
	\end{align*}
	Hence, it follows that
	\begin{align*}
		f^{\frac{p-1}{p}} &\equiv p^{\frac{p-1}{p}}(\phi^{-2}(\delta(f)))^{p-1} \pmod {(p^{\frac{p-1}{p}+\frac{1}{p^2}},x_1,\dots,x_n)S^+}.
	\end{align*}
	By Theorem \ref{p^e-root formula}, we see that
\begin{align*}
	f^{\frac{1}{p^2}} &\equiv \phi^{-2}(f) \pmod {p^{\frac{1}{p^2}}S^+}.
\end{align*}
Hence, we obtain
\begin{align*}
	f^{\frac{p^2-a}{p^2}} &= f^{\frac{p-1}{p}}f^{\frac{p-a}{p^2}} \\
 &\equiv p^{\frac{p-1}{p}}(\phi^{-2}(\delta(f)))^{p-1}(\phi^{-2}(f))^{p-a} \pmod{(p^{\frac{p-1}{p}+\frac{1}{p^2}},x_1,\dots,x_n)S^+}.
\end{align*}

There exists an integer $C$ coprime to $p$ such that
\[
	f^{p-a}(\delta(f))^{p-1}=Cx_1^{an}\cdots x_{n-1}^{an}x_{n}^{(p-r)n}+ (\text{other terms}).
\] 
Hence, 
\[
	f^{\frac{p^2-a}{p^2}}\notin (p,x_1,\dots,x_n)B
\]
for any perfectoid BCM $S^+$-algbera $B$. Since
\[
	\frac{p^2-a}{p^2}>\frac{n-1}{n},
\]
we obtain 
\[
	f^{\frac{n-1}{n}}\notin (p,x_1,\dots,x_n)B,
\]
which completes the proof.
\end{proof}

\begin{prop} \label{Fermat type an=p^2-a-1}
	Let the notation be as in Setting \ref{setting Fermat type}. Suppose that $p\ge 3$.  Let $a$ be a positive integer such that $1\le a \le p-1$ and suppose that $an=p^2-a-1$. Then $R$ is BCM-regular.
\end{prop}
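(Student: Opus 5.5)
We will argue as in the proof of Proposition \ref{Fermat type p^2-1/a}. By Proposition \ref{proposition index inequality} we may assume $d=n$, so $n=(p^2-a-1)/a$. Put $S=\Z_p\llbracket x_1,\dots,x_n\rrbracket$ and $f=x_1^n+\dots+x_n^n$. By Proposition \ref{BCM-regularity and brt}, it suffices to find a rational number $t>(n-1)/n$ such that $f^t\notin(p,x_1,\dots,x_n)B$ for every perfectoid BCM $S^+$-algebra $B$. As before, we will expand $f^t$ with the $p$-th root formulae (Lemma \ref{p-root formula}, Proposition \ref{remark p-root formula}, Theorem \ref{p^e-root formula}). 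We then isolate the term of minimal $p$-adic order, which is a fixed power of $p$ times $\phi^{-e}$ of a polynomial $G$, and reduce to the statement that $G\notin(p,x_1^{p^e},\dots,x_n^{p^e})S$.

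The first thing to check is why the exponent $t=(p^2-a)/p^2$ used before fails here. Since $p^2-1=a(n+1)$, we have $\lfloor (p^2-1)/n\rfloor=a$. Every monomial $x_1^{nm_1}\cdots x_n^{nm_n}$ with $nm_i<p^2$ then satisfies $\sum m_i\le an=p^2-a-1$. But $f^{p-a}\delta(f)^{p-1}$ is homogeneous with $\sum m_i=p^2-a$. So its leading term dies modulo $(p,x_1^{p^2},\dots,x_n^{p^2})$, and we lose exactly one unit of degree. To recover it, we pass to the next digit. We take $t=(p-1)/p+c_2/p^2+c_3/p^3+\cdots$ with $c_2\le p-a-1$, and the digits chosen so that $t>(n-1)/n$. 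This requires $1-t<a/(p^2-a-1)$, and the margin above $1-(a+1)/p^2$ must be made up by the later digits. For the factor $f^{(p-1)/p}$ we use Proposition \ref{remark p-root formula} with $e\ge 2$, keeping the second-order term $-p^{\frac1p+\frac1{p^2}}(\phi^{-2}(f))^{p-1}\phi^{-3}(\delta(f))$. For $f^{c_2/p^2}$ and $f^{c_3/p^3}$ we use Theorem \ref{p^e-root formula}, including the $\delta$-terms.

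Next we collect, modulo $(p^{\nu+\epsilon},x_1,\dots,x_n)S^+$, the terms of minimal order $p^{\nu}$. After applying $\phi^{3}$ these become an integer combination $G$ of products of the form $f^{i}\,\phi(f)^{j}\,\delta(f)^{k}\,\phi(\delta(f))^{l}$. The second-order term contributes $f^{p(p-1)}$-type factors that raise the degree. This extra degree, now measured against the bound $x_i^{p^3}$, is what compensates for the missing unit. We then exhibit a single monomial $x_1^{nm_1}\cdots x_n^{nm_n}$ with all $nm_i<p^3$ and $\sum m_i$ equal to the degree of $G$. We choose it as balanced as possible: most $m_i$ equal to the maximal value allowed by $nm_i<p^3$, and one coordinate adjusted to absorb the remainder, in the same spirit as the proof of Proposition \ref{Fermat type <=p}. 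Finally we show that its coefficient is prime to $p$, using the coefficient lemma preceding Proposition \ref{Fermat type <=p} for the factors $\delta(f)^sf^t$ together with multinomial coefficients computed via Lucas' theorem. The hypothesis $p\ge3$ is needed so that the relevant digits, such as $p-a-1$ and $c_3$, are admissible and the binomial coefficients $\binom{p-1}{r}$ that appear are nonzero mod $p$.

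The main obstacle is this last step. Several terms may contribute at the same minimal $p$-power: for example, the product of the first-order terms and the term involving $\gamma_{e,j}$ with $j\ge2$. These must either be shown to lie in $(x_1,\dots,x_n)$ after multiplication, or their contributions to the chosen monomial must be shown not to cancel. I would first try to choose the digits $c_3,\dots$ so that the unknown $\gamma$-terms are forced to carry a factor $(\phi^{-2}(f))^{p-j}$ of high enough degree to vanish modulo $(x_1,\dots,x_n)$. The chosen monomial then appears only in one explicit product, and its coefficient is a product of factorial ratios prime to $p$.
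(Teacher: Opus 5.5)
Your reduction to $d=n$ and to finding some $t>(n-1)/n$ with $f^t\notin(p,x_1,\dots,x_n)B$ is correct, and so is your degree count showing that the leading term of $f^{(p^2-a)/p^2}$ vanishes. But the proposal stops where the proof has to happen: you never fix $t$, never say which term survives, and you leave the non-cancellation open.

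The mechanism you sketch is also not the one that works.
\begin{itemize}
\item Once $c_2=p-a-1$, there is no missing unit. The product $\delta(f)^{p-1}f^{p-a-1}$ has $\sum m_i=p^2-a-1=an$. By the coefficient lemma (with $s=p-1$, $t=p-a-1$), the leading term $p^{\frac{p-1}{p}}C_1x_1^{an/p^2}\cdots x_n^{an/p^2}$ survives with $C_1$ a unit.
\item The second-order term $-p^{\frac1p+\frac1{p^2}}(\phi^{-2}(f))^{p-1}\phi^{-3}(\delta(f))$ of $f^{1/p}$, which you rely on, plays no role. Where the paper goes to deeper digits, it has strictly larger $p$-order than the term that survives.
\item At $c_2=p-a$, where it does sit at the critical order, its contribution $\phi^{-2}(f^{2p-a-1}\delta(f)^{p-2})\phi^{-3}(\delta(f))$ lies in $(p,x_1,\dots,x_n)S^+$. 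The only candidate monomial is $x_1^{an}\cdots x_n^{an}$, whose coefficient $(p^2-a-1)!/((-p)^{p-2}(a!)^n)$ is divisible by $p$.
\end{itemize}

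The missing idea is the dichotomy between $(ap+1)n<p^3$ and $(ap+1)n\ge p^3$. It decides whether the third digit can be supplied by $\phi^{-3}(f)$.

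\emph{Case $(ap+1)n<p^3$.} The paper takes $t=(p^2-a)/p^2$, so that exponent does not fail. The survivor is the $\delta$-correction coming from $f^{(p-a)/p^2}$, namely $-ap^{\frac{p-1}{p}+\frac1{p^2}}\phi^{-2}(\delta(f)^{p-1}f^{p-a-1})\phi^{-3}(\delta(f))$. Its monomial $x_1^{(ap+1)n/p^3}\cdots x_p^{(ap+1)n/p^3}x_{p+1}^{an/p^2}\cdots x_n^{an/p^2}$ has unit coefficient $a(p-1)!\,C_1$. Your own framework also works here using the leading term alone: $t=1-(ap+1)/p^3$ (so $c_2=p-a-1$, $c_3=p-1$) exceeds $(n-1)/n$ exactly because $(ap+1)n<p^3$.

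\emph{Case $(ap+1)n\ge p^3$.} Now $x_1^{an/p^2}\cdots x_n^{an/p^2}\phi^{-3}(f)\in(x_1,\dots,x_n)S^+$, so one must use the $\delta$-term of $f^{1/p^3}$ instead.
\begin{itemize}
\item Take $t=\frac{p-1}{p}+\frac{p-a-1}{p^2}+\frac{p-1}{p^3}+\frac{p-1}{p^4}$. Then $1-t=(ap^2+1)/p^4<1/n$.
\item The survivor is $p^{\frac{p-1}{p}+\frac{p-1}{p^3}}C_1x_1^{an/p^2}\cdots x_n^{an/p^2}\,\phi^{-4}(\delta(f)^{p-1}f^{p-1})$.
\item In $\delta(f)^{p-1}f^{p-1}$, take the monomial with $m_i=a+1$ for $i\le a$ and $m_i=a$ for $i>a$. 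Its coefficient is a unit by the coefficient lemma with $s=t=p-1$.
\item This monomial is not killed, since $(ap^2+a+1)n=p^4-ap^2-a-1+n<p^4$.
\end{itemize}

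Without this case split and these explicit survivors, the proposal does not amount to a proof. Separately, $\binom{p-1}{r}\equiv(-1)^r \pmod p$ is always a unit, so that is not where $p\ge 3$ is used.
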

\begin{proof}
	Let $S=\Z_p\llbracket x_1,\dots,x_n \rrbracket$ and $f=x_1^n+\dots+x_n^n$. 
By Theorem \ref{p^e-root formula},
\begin{align*}
	f^{\frac{1}{p}}&\equiv \phi^{-1}(f)+p^{\frac{1}{p}}\phi^{-2}(\delta(f))-p^{\frac{1}{p}+\frac{1}{p^2}}(\phi^{-2}(f))^{p-1}\phi^{-3}(\delta(f))+p^{\frac{1}{p}+\frac{1}{p^2}+\frac{1}{p^3}}(\phi^{-2}(f))^{p-1}\beta \\
&\pmod{(p^{\frac{1}{p}+\frac{2}{p^2}},x_1,\dots,x_n)S^+}
\end{align*}
for some $\beta\in S^+$. Hence,
\begin{align*}
	f^{\frac{p-1}{p}}& \equiv p^{\frac{p-1}{p}}(\phi^{-2}(\delta(f)))^{p-1}+p^{\frac{p-1}{p}+\frac{1}{p^2}}(\phi^{-2}(f))^{p-1}(\phi^{-2}(\delta(f)))^{p-2}\phi^{-3}(\delta(f))\\
&-p^{\frac{p-1}{p}+\frac{1}{p^2}+\frac{1}{p^3}}\phi^{-2}(f)^{p-1}(\phi^{-2}(\delta(f)))^{p-2}\beta \\
&\pmod{(p^{\frac{p-1}{p}+\frac{2}{p^2}},x_1,\dots,x_n)S^+}.
\end{align*}
Similarly, we have
\begin{align*}
	f^{\frac{1}{p^2}} \equiv \phi^{-2}(f)+p^{\frac{1}{p^2}}\phi^{-3}(\delta(f))+p^{\frac{1}{p^2}+\frac{1}{p^3}}(\phi^{-3}(f))^{p-1}\beta_2 \pmod{(p^{\frac{1}{p^2}+\frac{2}{p^3}},x_1,\dots,x_n)S^+}
\end{align*}
for some $\beta_2\in S^+$. First, suppose that $(ap+1)n<p^3$. We have
\begin{align*}
	f^{\frac{p^2-a}{p^2}} &= f^{\frac{p-1}{p}}f^{\frac{p-a}{p^2}}\\
&\equiv -ap^{\frac{p-1}{p}+\frac{1}{p^2}} (\phi^{-2}(\delta(f)))^{p-1}(\phi^{-2}(f))^{p-a-1}\phi^{-3}(\delta(f)) \\
&\pmod {(p^{\frac{p-1}{p}+\frac{1}{p^2}+\frac{1}{p^3}},x_1,\dots,x_n)S^+}.
\end{align*}
There exists an integer $C_1$ coprime to $p$ such that
\begin{align*}
	f^{\frac{p^2-a}{p^2}} &\equiv -aC_1p^{\frac{p-1}{p}+\frac{1}{p^2}} x_1^{\frac{an}{p^2}}\dots x_n^{\frac{an}{p^2}} \phi^{-3}(\delta(f)) \\
&\equiv a((p-1)!)C_1p^{\frac{p-1}{p}+\frac{1}{p^2}} x_1^{\frac{(ap+1)n}{p^3}}\cdots x_p^{\frac{(ap+1)n}{p^3}}x_{p+1}^{\frac{an}{p^2}}\cdots x_n^{\frac{an}{p^2}} + (\text{other terms}) \\
&\pmod {(p^{\frac{p-1}{p}+\frac{1}{p^2}+\frac{1}{p^3}},x_1,\dots,x_n)S^+}.
\end{align*}
Hence, $f^{\frac{p^2-a}{p^2}}\notin (p,x_1,\dots,x_n)B$ for any perfectoid BCM $S^+$-algebra. Since $an<p^2$, we have $(n-1)/n<(p^2-a)/p^2$. Therefore,
\[
	f^{\frac{n-1}{n}}\notin (p,x_1,\dots,x_n)B,
\]
which completes the proof in this case.
Next, suppose that $(ap+1)n\ge p^3$. We have
\begin{align*}
	f^{\frac{p-1}{p}+\frac{p-a-1}{p^2}} &= f^{\frac{p-1}{p}}f^{\frac{p-a-1}{p^2}}\\
&\equiv p^{\frac{p-1}{p}} (\phi^{-2}(\delta(f)))^{p-1}(\phi^{-2}(f))^{p-a-1} \pmod {(p^{\frac{p-1}{p}+\frac{1}{p^2}},x_1,\dots,x_n)S^+}.
\end{align*}
Hence,
\[
	f^{\frac{p-1}{p}+\frac{p-a-1}{p^2}}\equiv p^{\frac{p-1}{p}}C_1 x_1^{\frac{an}{p^2}}\dots x_n^{\frac{an}{p^2}} \pmod {(p^{\frac{p-1}{p}+\frac{1}{p^2}},x_1,\dots,x_n)S^+}.
\]

We also have
\[
	f^{\frac{1}{p^3}} \equiv \phi^{-3}(f)+p^{\frac{1}{p^3}}\phi^{-4}(\delta(f)) \pmod{p^{\frac{1}{p^3}+\frac{1}{p^4}}S^+}
\]
and
\[
	f^{\frac{1}{p^4}} \equiv \phi^{-4}(f) \pmod{p^{\frac{1}{p^4}}S^+}.
\]
Since $(ap+1)n\ge p^3$, we see that 
\[
	x_1^{\frac{an}{p^2}}\cdots x_n^{\frac{an}{p^2}}\phi^{-3}(f) \in (x_1,\dots,x_n)S^+.
\]
Hence,
\begin{align*}
	f^{\frac{p-1}{p}+\frac{p-a-1}{p^2}+\frac{p-1}{p^3}+\frac{p-1}{p^4}} &\equiv p^{\frac{p-1}{p}+\frac{p-1}{p^3}}C_1x_1^{\frac{an}{p^2}}\cdots x_n^{\frac{an}{p^2}}(\phi^{-4}(\delta(f)))^{p-1}(\phi^{-4}(f))^{p-1} \\
&\pmod{(p^{\frac{p-1}{p}+\frac{p-1}{p^3}+\frac{1}{p^4}},x_1,\dots,x_n)S^+}.
\end{align*}
Therefore, there exists an integer $C_2$ coprime to $p$ such that
\begin{align*}
	&f^{\frac{p-1}{p}+\frac{p-a-1}{p^2}+\frac{p-1}{p^3}+\frac{p-1}{p^4}} \equiv p^{\frac{p-1}{p}+\frac{p-1}{p^3}} C_2 x_1^{\frac{(ap^2+a+1)n}{p^4}}\cdots x_a^{\frac{(ap^2+a+1)n}{p^4}}x_{a+1}^{\frac{(ap^2+a)n}{p^4}}\cdots x_n^{\frac{(ap^2+a)n}{p^4}}\\
& \pmod {(p^{\frac{p-1}{p}+\frac{p-1}{p^3}+\frac{1}{p^4}},x_1^{\frac{(ap^2+a+2)n}{p^4}},\dots,x_a^{\frac{(ap^2+a+2)n}{p^4}},x_{a+1}^{\frac{(ap^2+a+1)n}{p^4}}, \dots, x_n^{\frac{(ap^2+a+1)n}{p^4}},\\
& \qquad \qquad x_1,\dots ,x_a)S^+}.
\end{align*}
Since
\[
	(ap^2+a+1)n=(p^2+1)(p^2-a-1)+n=p^4-ap^2-a-1+n<p^4,
\]
it follows that 
\[
	f^{\frac{p-1}{p}+\frac{p-a-1}{p^2}+\frac{p-1}{p^3}+\frac{p-1}{p^4}}\notin (p,x_1,\dots,x_n)B
\]
for any perfectoid BCM $S^+$-algebra $B$. Since
\[
	\frac{p-1}{p}+\frac{p-a-1}{p^2}+\frac{p-1}{p^3}+\frac{p-1}{p^4}>\frac{n-1}{n},
\]
we obtain
\[
	f^{\frac{n-1}{n}}\notin (p,x_1,\dots,x_n)B,
\]
which completes the proof.
\end{proof}

For the next proposition, we need the following more refined version of Proposition \ref{remark p-root formula}.
\begin{lem} \label{refined p-root formula}
	Let $p\ge 3$ be a prime number. Let $S:=\Z_p \llbracket x_1,\dots,x_n \rrbracket$ and $f\in S$. Then there exist $\gamma_{i,j}\in S^+$ such that	

\begin{align*}
&f^{\frac{1}{p}}\\
&= \phi^{-1}(f)+p^{\frac{1}{p}}\phi^{-2}(\delta(f))-\sum_{i=1}^{p-1}\frac{1}{p}\binom{p}{i}p^{\frac{1}{p}+\frac{i}{p^2}}(\phi^{-3}(\delta(f)))^{i}(\phi^{-2}(f))^{p-i}\\
& \quad +\sum_{i=1}^{p-1}\sum_{j=1}^{p-1}\frac{i}{p^2}\binom{p}{i}\binom{p}{j}p^{\frac{1}{p}+\frac{i}{p^2}+\frac{j}{p^3}}(\phi^{-2}(f))^{p-i}(\phi^{-3}(f))^{p-j}(\phi^{-3}(\delta(f)))^{i-1}(\phi^{-4}(\delta(f)))^{j}\\
& \quad -\sum_{i=1}^{p-1}\sum_{j=1}^{p-1}\frac{ij}{p^2}\binom{p}{i}\binom{p}{j}p^{\frac{1}{p}+\frac{i}{p^2}+\frac{j}{p^3}+\frac{1}{p^4}}\\
& \quad \cdot (\phi^{-2}(f))^{p-i}\phi^{-3}(f^{p-j}(\delta(f))^{i-1})\phi^{-4}(f^{p-1}(\delta(f))^{j-1})\phi^{-5}(\delta(f))\\
& \quad +\sum_{i=1}^{p-1}\sum_{j=1}^{p-1}p^{\frac{1}{p}+\frac{i}{p^2}+\frac{j}{p^3}+\frac{1}{p^4}+\frac{1}{p^5}}(\phi^{-2}(f))^{p-i}(\phi^{-3}(f))^{p-j}\gamma_{i,j}\\
&\quad +\sum_{i=1}^{p-1}p^{\frac{1}{p}+\frac{i+1}{p^2}}(\phi^{-2}(f))^{p-i}\gamma_{i,p} + p^{\frac{2}{p}}\gamma_{p,p}.
	\end{align*}
\end{lem}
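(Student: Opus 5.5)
The plan is to run the same bootstrapping used for Lemma \ref{p-root formula} and Proposition \ref{remark p-root formula}, one order deeper. Put $u:=f^{\frac{1}{p}}-\phi^{-1}(f)$. The computation in Proposition \ref{remark p-root formula} for $p\ge 3$ gives
\[
u^p\equiv p\phi^{-1}(\delta(f))+\sum_{i=1}^{p-1}(-1)^{p-i}\binom{p}{i}f^{\frac{i}{p}}(\phi^{-1}(f))^{p-i}-\text{(correction)}\pmod{p^2S^+}.
\]
Rather than bounding the middle sum crudely, I will rewrite it in the form $-p\sum_{i}\frac{1}{p}\binom{p}{i}u^{i}(\phi^{-1}(f))^{p-i}$. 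This uses the identity
\[
(x-y)^p-x^p+y^p=\sum_{i=1}^{p-1}\binom{p}{i}(-1)^{p-i}x^iy^{p-i},
\]
re-expanded in powers of $x-y$ (valid since $p$ is odd). After that I extract $p$-th roots term by term, using the rule that if $v^p\equiv w^p \pmod{p^{c}}$ then $v-w\in p^{c/p}S^+$, up to the usual cross terms.

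\textbf{Step 1 (first layer).} The leading term of $u$ is $p^{\frac{1}{p}}\phi^{-2}(\delta(f))$, as in Lemma \ref{p-root formula}. Write $u=p^{\frac1p}\phi^{-2}(\delta(f))+u_1$ with $u_1\in p^{\frac1p+\frac1{p^2}}S^+$. Substituting into $u^p$ and expanding shows that the contribution of the $i$-th binomial term to $u_1^p$ is $-\frac1p\binom{p}{i}p\,(f^{\frac1p}-\phi^{-1}(f))^i(\phi^{-1}(f))^{p-i}$. Modulo higher powers of $p$, this is a $p$-th power of $p^{\frac1p+\frac{i}{p^2}}(\phi^{-2}(f))^{p-i}\bigl(f^{\frac{1}{p^2}}-\phi^{-2}(f)\bigr)^{i}$ up to a unit. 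By Theorem \ref{p^e-root formula} with $e'=2$, the factor $f^{\frac1{p^2}}-\phi^{-2}(f)$ equals $p^{\frac1{p^2}}\phi^{-3}(\delta(f))+\cdots$. This produces the third line of the formula. Everything at $p$-order $\ge \frac1p+\frac{i+1}{p^2}$ carrying the factor $(\phi^{-2}(f))^{p-i}$ is absorbed into $\gamma_{i,p}$, and terms of order $\ge\frac2p$ go into $\gamma_{p,p}$.

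\textbf{Step 2 (second and third layers).} I then iterate. The error in replacing $(f^{1/p^2}-\phi^{-2}(f))^i$ by $p^{\frac{i}{p^2}}(\phi^{-3}(\delta(f)))^i$ is controlled by the binomial expansion, $i$ times the $(i-1)$-st power times the next correction. That next correction is again given by the analogue of Step 1 one Frobenius level down, namely $-\sum_j\frac1p\binom{p}{j}p^{\frac1{p^2}+\frac{j}{p^3}}(\phi^{-3}(f))^{p-j}(\phi^{-4}(\delta(f)))^j$. Taking $p$-th roots turns the prefactors $\frac{i}{p}\cdot\frac{1}{p}\binom{p}{i}\binom{p}{j}$ into the coefficient $\frac{i}{p^2}\binom pi\binom pj$ and produces the fourth line. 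Repeating once more, with the next correction $\phi^{-5}(\delta(f))$ appearing through the $j$-th power expansion of $(f^{1/p^3}-\phi^{-3}(f))^j$, gives the fifth line with coefficient $\frac{ij}{p^2}\binom{p}{i}\binom{p}{j}$. In that line the unit factors $\phi^{-3}(f^{p-j}\delta(f)^{i-1})$ and $\phi^{-4}(f^{p-1}\delta(f)^{j-1})$ arise from the Frobenius-twisted cofactors. All remaining terms have $p$-order at least $\frac1p+\frac{i}{p^2}+\frac{j}{p^3}+\frac1{p^4}+\frac1{p^5}$ and keep the factor $(\phi^{-2}(f))^{p-i}(\phi^{-3}(f))^{p-j}$; they form $\gamma_{i,j}$.

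\textbf{Main obstacle.} The hard part is bookkeeping the cross terms so that each leftover really keeps the monomial factor $(\phi^{-2}(f))^{p-i}(\phi^{-3}(f))^{p-j}$. Without a regular-sequence hypothesis I cannot use the BCM argument of Proposition \ref{remark p-root formula} to divide by these factors. So every step must be done by explicit factorization in $S^+$: I will track each congruence modulo ideals of the form $\bigl(p^{c}(\phi^{-2}(f))^{p-i}\bigr)$ and never merely modulo $p^c$. I would also check that $p$-th roots of such congruences preserve the factor. This holds because $(\phi^{-2}(f))^{p(p-i)}\equiv(\phi^{-1}(f))^{p-i}$ modulo $p$, so the factor lifts, up to a term of $p$-order at least $\frac2p$. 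That term is exactly what $\gamma_{p,p}$ absorbs.
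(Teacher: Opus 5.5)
Your proposal is essentially the paper's proof. You bootstrap the same exact identity $u^p=p\phi^{-1}(\delta(f))-\sum_{i=1}^{p-1}\binom{p}{i}u^i(\phi^{-1}(f))^{p-i}$ and extract $p$-th roots through Frobenius modulo $p$, which is what preserves the factors $(\phi^{-2}(f))^{p-i}$; the only difference is that you organize it as the level-shifting recursion $f^{1/p}-\phi^{-1}(f)\equiv p^{1/p}\phi^{-2}(\delta(f))-\sum_{i=1}^{p-1}\frac{1}{p}\binom{p}{i}p^{1/p}(\phi^{-2}(f))^{p-i}(f^{1/p^2}-\phi^{-2}(f))^{i} \pmod{p^{2/p}S^+}$, whereas the paper re-substitutes the level-one expansion of $u$. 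Three fixes are needed: in Step 1 the prefactor must be $p^{1/p}$, not $p^{1/p+i/p^2}$, since otherwise $p^{i/p^2}$ is counted twice once you expand $f^{1/p^2}-\phi^{-2}(f)$; your root-extraction rule holds only for $v,w\in p^{1/p}S^+$ and $c\le 2$ (via $S^+/p^{1/p}S^+\cong S^+/pS^+$), though that is the only range you use; and the lower-level expansions should be obtained from Lemma \ref{p-root formula} and your Step 1 by the $p$-th-power reduction in the proof of Theorem \ref{p^e-root formula}, rather than by citing that theorem, which assumes $p,f$ is a regular sequence.
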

\begin{proof}
	First, we show the following claim.
	\begin{cl}
		The equation
		$(x-y)^p=x^p-y^p-\sum_{i=1}^{p-1}\binom{p}{i}(x-y)^iy^{p-i}$
		holds in $\Z[x,y]$.
	\end{cl}
	\begin{clproof}
		We can write
		\[
			(x-y)^p=x^p+\sum_{i=0}^{p-1}C_i(x-y)^iy^{p-i}
		\]
		for some integers $C_i$. For $0\le n\le p-1$, differentiating both sides $n$ times with respect to $x$ and then evaluating at $x=y=1$, we obtain
		\[
			0=\frac{p!}{(p-n)!}+n!C_n.
		\]
		Therefore,
		\[
			C_n=-\binom{p}{n}.
		\]
	\end{clproof}
	By the above claim, we have
	\begin{align} \label{equation 1}
		(f^{\frac{1}{p}}-\phi^{-1}(f))^p &= p\phi^{-1}(\delta(f))-\sum_{i=1}^{p-1}\binom{p}{i}(f^{\frac{1}{p}}-\phi^{-1}(f))^{i}(\phi^{-1}(f))^{p-i}.
	\end{align}
	By Lemma \ref{p-root formula}, there exists $\beta\in S^+$ such that
\begin{align} \label{equation 2}
		f^{\frac{1}{p}}= \phi^{-1}(f)+p^{\frac{1}{p}}\phi^{-2}(\delta(f))-p^{\frac{1}{p}+\frac{1}{p^2}}(\phi^{-2}(f))^{p-1}\phi^{-3}(\delta(f))+p^{\frac{1}{p}+\frac{1}{p^2}+\frac{1}{p^3}}\beta.
	\end{align}
	Substituting \ref{equation 2} into \ref{equation 1}, we obtain
	\begin{align*}
		(f^{\frac{1}{p}}-\phi^{-1}(f))^p &= p\phi^{-1}(\delta(f))-\sum_{i=1}^{p-1}\binom{p}{i}p^{\frac{i}{p}}(\phi^{-2}(\delta(f)))^{i}(\phi^{-1}(f))^{p-i}\\
&\quad +\sum_{i=1}^{p-1}i\binom{p}{i}p^{\frac{i}{p}+\frac{1}{p^2}}(\phi^{-1}(f))^{p-i}(\phi^{-2}(f))^{p-1}(\phi^{-2}(\delta(f)))^{i-1}\phi^{-3}(\delta(f))\\
& \quad +\sum_{i=1}^{p-1}p^{1+\frac{i}{p}+\frac{1}{p^2}+\frac{1}{p^3}}(\phi^{-1}(f))^{p-i}\beta_{i}
\end{align*}
	for some $\beta_{i} \in S^+$.
	Therefore, there exist $\gamma_{i}\in S^+$ such that
	\begin{align*} \label{equation 3}
	\stepcounter{equation}\tag{\theequation}
		f^{\frac{1}{p}}&=\phi^{-1}(f)+p^{\frac{1}{p}}\phi^{-2}(\delta(f))-\sum_{i=1}^{p-1}\frac{1}{p}\binom{p}{i}p^{\frac{1}{p}+\frac{i}{p^2}}(\phi^{-3}(\delta(f)))^{i}(\phi^{-2}(f))^{p-i}\\
&\quad +\sum_{i=1}^{p-1}\frac{i}{p}\binom{p}{i}p^{\frac{1}{p}+\frac{i}{p^2}+\frac{1}{p^3}}(\phi^{-2}(f))^{p-i}(\phi^{-3}(f))^{p-1}(\phi^{-3}(\delta(f)))^{i-1}\phi^{-4}(\delta(f))\\
& \quad +\sum_{i=1}^{p-1}p^{\frac{1}{p}+\frac{i}{p^2}+\frac{1}{p^3}+\frac{1}{p^4}}(\phi^{-2}(f))^{p-i}\gamma_{i}+p^{\frac{2}{p}}\gamma_p.
	\end{align*}
Substituting \ref{equation 3} into \ref{equation 1}, there exist $\beta_{i,j} \in S^+$ such that
	\begin{align*}
		&(f^{\frac{1}{p}}-\phi^{-1}(f))^p \\ 
&= p\phi^{-1}(\delta(f))-\sum_{i=1}^{p-1}\binom{p}{i}p^{\frac{i}{p}}(\phi^{-2}(\delta(f)))^{i}(\phi^{-1}(f))^{p-i}\\
&\quad +\sum_{i=1}^{p-1}\sum_{j=1}^{p-1}\frac{i}{p}\binom{p}{i}\binom{p}{j}p^{\frac{i}{p}+\frac{j}{p^2}}(\phi^{-1}(f))^{p-i}(\phi^{-2}(f))^{p-j}(\phi^{-2}(\delta(f)))^{i-1}(\phi^{-3}(\delta(f)))^j\\
& \quad -\sum_{i=1}^{p-1}\sum_{j=1}^{p-1}\frac{ij}{p^2}\binom{p}{i}\binom{p}{j}p^{\frac{i}{p}+\frac{j}{p^2}+\frac{1}{p^3}}\\
& \quad \cdot (\phi^{-1}(f))^{p-i}\phi^{-2}(f^{p-j}(\delta(f))^{i-1}))\phi^{-3}(f^{p-1}(\delta(f))^{j-1})\phi^{-4}(\delta(f))\\
&\quad +\sum_{i=1}^{p-1}\sum_{j=1}^{p-1}p^{1+\frac{i}{p}+\frac{j}{p^2}+\frac{1}{p^3}+\frac{1}{p^4}}(\phi^{-1}(f))^{p-i}(\phi^{-2}(f))^{p-j}\beta_{i,j}+\sum_{i=1}^{p-1} p^{1+\frac{i+1}{p}}(\phi^{-1}(f))^{p-i}\beta_{i,p}.
	\end{align*}
	Therefore, the statement follows from the same argument as above.
\end{proof}
\begin{prop} \label{Fermat type an=p^2-p}
	Let the notation be as in Setting \ref{setting Fermat type}. Let $a$ be a positive integer such that $1\le a \le p-1$. Suppose that one of the following holds:
\begin{enumerate}
	\item $an=p^2-p$.
	\item $an=p^2-p-1$.
\end{enumerate}
Then $R$ is BCM-regular.
\end{prop}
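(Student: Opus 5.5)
We follow the same strategy as in Propositions \ref{Fermat type p^2-1/a} and \ref{Fermat type an=p^2-a-1}. First, by Proposition \ref{proposition index inequality} we may assume $d=n$. We set $S=\Z_p\llbracket x_1,\dots,x_n\rrbracket$ and $f=x_1^n+\dots+x_n^n$. By Proposition \ref{BCM-regularity and brt}, it suffices to find an exponent $t>(n-1)/n$ with $f^{t}\notin(p,x_1,\dots,x_n)B$ for every perfectoid BCM $S^+$-algebra $B$. Since $an<p^2$ in both cases, the natural candidate is a truncated $p$-adic expansion beginning with $\frac{p-1}{p}+\frac{p-a}{p^2}$, lowered in the second digit and padded with further digits if necessary.

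We will expand $f^{(p-1)/p}$ using Lemma \ref{refined p-root formula} rather than Proposition \ref{remark p-root formula}. The reason is that when $an=p^2-p$ or $p^2-p-1$, the leading term $p^{\frac{p-1}{p}}(\phi^{-2}(\delta(f)))^{p-1}(\phi^{-2}(f))^{p-a}$ used in Proposition \ref{Fermat type p^2-1/a} fails. Writing $r=an-(p^2-p)$, we have $r\le 0$, so the dominant monomial $x_1^{an}\cdots$ of $f^{p-a}\delta(f)^{p-1}$ now has total degree at most $p^2-p$ in units of $n$. That is too small to survive after taking $\phi^{-2}$ against the ideal we work modulo, or else it is killed. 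The plan is therefore to look one $p$-adic level deeper. We track the correction terms $\frac1p\binom{p}{i}p^{\frac1p+\frac{i}{p^2}}(\phi^{-3}(\delta(f)))^i(\phi^{-2}(f))^{p-i}$ and the double-sum terms of Lemma \ref{refined p-root formula}. We then choose the exponent so that exactly one term carries the minimal power of $p$ together with a monomial whose $x$-exponents are all $<1$, while every other term lies in $(p^{\text{larger}},x_1,\dots,x_n)S^+$.

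Concretely, the steps are as follows.
\begin{enumerate}
\item Raise the refined expansion of $f^{1/p}$ to the $(p-1)$-st power modulo $(p^{\frac{p-1}{p}+\frac{i}{p^2}+\epsilon},x_1,\dots,x_n)S^+$ for the appropriate $i$. Here $i=1$ in case (2), and in case (1) we expect one more step with $j$ as in the second-order double sum.
\item Multiply by expansions of $f^{c/p^2}$ and $f^{c'/p^3}$ obtained from Theorem \ref{p^e-root formula}.
\item Identify the surviving coefficient as a product of $\frac1p\binom{p}{i}$, possibly $\frac{i}{p^2}\binom{p}{i}\binom{p}{j}$, and a multinomial coefficient $C$. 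By the coefficient lemma preceding Proposition \ref{Fermat type <=p}, $C$ is coprime to $p$, because all multiplicities $m_i<p$ and $\sum m_i=sp+t$.
\end{enumerate}
As in the second case of Proposition \ref{Fermat type an=p^2-a-1}, a monomial $p^{c}x^{\lambda}$ with unit coefficient and all $\lambda_k<1$ is nonzero modulo $(p^{c+\epsilon},x_1,\dots,x_n)B$, because $p,x_1,\dots,x_n$ is a regular sequence on $B$. The remaining check is the inequality $t>(n-1)/n$, which reduces to $an\le p^2-p$ compared with the digit expansion of $t$.

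We expect the main obstacle to be the bookkeeping in step (1). We must show that all other cross terms (mixed $i$, $j$, and the $\gamma$-error terms) either have strictly larger $p$-valuation or contain a monomial with some $x_k$-exponent $\ge1$. This requires careful degree counting of monomials in $\phi^{-2}(f^{p-i}\delta(f)^{\cdot})\phi^{-3}(\cdots)$, using $an=p^2-p$ (respectively $p^2-p-1$) to force the degrees $\frac{(\ldots)n}{p^3}$ to lie just below $1$ for the chosen term. We would first try $t=\frac{p-1}{p}+\frac{p-a-1}{p^2}+\frac{p-1}{p^3}$ and adjust the last digits until a unique minimal term survives. Case (2), where $n(ap+1)$ can cross $p^3$, may need to be split into subcases as in Proposition \ref{Fermat type an=p^2-a-1}.
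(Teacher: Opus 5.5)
Your framework is the paper's: reduce to $d=n$, then exhibit some $t>(n-1)/n$ with $f^t\notin(p,x_1,\dots,x_n)B$ for every perfectoid BCM $S^+$-algebra $B$. But the proposal stops where the proof begins. No exponent is fixed, no term with unit coefficient is exhibited, and the concrete guidance you give points the wrong way.

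In case (1) nothing beyond first order is needed. Since $an=p^2-p$, a monomial $x^{mn}$ of $\delta(f)^{p-1}$ survives $\phi^{-2}$ modulo $(x_1,\dots,x_n)$ only if every $m_k\le a$. The constraint $\sum m_k=p(p-1)=an$ then forces $m_k=a$ for all $k$. So the leading term $p^{\frac{p-1}{p}}(\phi^{-2}(\delta(f)))^{p-1}\equiv p^{\frac{p-1}{p}}C_0x_1^{an/p^2}\cdots x_n^{an/p^2}$ does survive. Only its product with $(\phi^{-2}(f))^{p-a}$ dies, because $n\ge p$. The paper instead takes the factor $(p^{1/p^2}\phi^{-3}(\delta(f)))^{p-a}$ from $f^{(p-a)/p^2}$. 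This gives $t=(p^2-a)/p^2$, with the surviving monomial dictated by $p^2-ap=nQ+r$. Here the refined formula matters only because the error terms carry a factor $(\phi^{-2}(f))^{p-i}$ and hence die; no double sums are needed in this case.

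Case (2) is where the extra terms are needed, and there your ``exactly one minimal term'' picture is false. The leading term now dies: surviving monomials need $m_k\le a$, while $\sum m_k=p(p-1)>an$. After multiplying $f^{(p-1)/p}$ by $f^{(p-a)/p^2}$, consider each pair $(i,k)$ with $i+k=p-a+1$: the $i$-th correction term $\frac1p\binom{p}{i}p^{\frac1p+\frac{i}{p^2}}(\phi^{-3}(\delta(f)))^i(\phi^{-2}(f))^{p-i}$ times $k$ factors of $p^{1/p^2}\phi^{-3}(\delta(f))$. Every such pair contributes the same power of $p$ and the same $x$-part. So the relevant coefficient is the sum $\frac1p\sum_i\binom{p}{i}\binom{p-a}{i-1}=\frac1p\binom{2p-a}{p-1}$. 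The key arithmetic input is that this sum is prime to $p$; a single $\frac1p\binom{p}{i}$ (in particular the $i=1$ term alone) is not what survives.

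Worse, this power of $p$ is $\frac{p-1}{p}+\frac{p-a+1}{p^2}$, which equals $1$ when $a=1$, i.e.\ $n=p^2-p-1$ ($a=2$ is excluded by parity). In that case everything vanishes modulo $p$. You must lower the second digit and use the double-sum terms of Lemma \ref{refined p-root formula}:
\begin{itemize}
\item for $p\ge5$, the paper takes $t=(p^4-p^2-p-1)/p^4$, with unit coefficient $\frac{p-1}{p^2}\binom{2p-2}{p-1}^2$, using $(p+2)n\ge p^3$;
\item for $p=3$, $n=5$, that inequality fails, and a separate computation with $t=22/27$ is needed.
\end{itemize}
So the necessary case split is on $a$ and $p$. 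Your plan puts the double sums in case (1) and ``$i=1$'' in case (2), which is backwards.

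Finally, requiring every other term to lie in $(p^{c+\epsilon},x_1,\dots,x_n)S^+$ is too strong. Permutations of the chosen exponent pattern also survive modulo $(x_1,\dots,x_n)$. You must work modulo a monomial ideal such as $(x_1^{(ap+Q+2)n/p^3},\dots,x_r^{(ap+Q+2)n/p^3},x_{r+1}^{(ap+Q+1)n/p^3},\dots,x_n^{(ap+Q+1)n/p^3})$ plus a suitable power of $p$, and use that $p$ and these fractional powers form a regular sequence on $B$.
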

\begin{proof}
	We may assume that $d=n$. Let $S:=\Z_p\llbracket x_1,\dots,x_n\rrbracket$ and $f=x_1^n+\dots+x_n^n$. By Lemma \ref{refined p-root formula},
there exist $\gamma_{i,j}\in S^+$ such that
	\begin{align*}
		&f^{\frac{1}{p}}\\
&=\phi^{-1}(f)+p^{\frac{1}{p}}\phi^{-2}(\delta(f))-\sum_{i=1}^{p-1}\frac{1}{p}\binom{p}{i}p^{\frac{1}{p}+\frac{i}{p^2}}(\phi^{-3}(\delta(f)))^{i}(\phi^{-2}(f))^{p-i}\\
&\quad +\sum_{i=1}^{p-1}\sum_{j=1}^{p-1}\frac{i}{p^2}\binom{p}{i}\binom{p}{j}p^{\frac{1}{p}+\frac{i}{p^2}+\frac{j}{p^3}}(\phi^{-2}(f))^{p-i}(\phi^{-3}(f))^{p-j}(\phi^{-3}(\delta(f)))^{i-1}(\phi^{-4}(\delta(f)))^{j}\\
& \quad -\sum_{i=1}^{p-1}\sum_{j=1}^{p-1}\frac{ij}{p^2}\binom{p}{i}\binom{p}{j}p^{\frac{1}{p}+\frac{i}{p^2}+\frac{j}{p^3}+\frac{1}{p^4}}\\
& \quad \cdot (\phi^{-2}(f))^{p-i}\phi^{-3}(f^{p-j}(\delta(f))^{i-1})\phi^{-4}(f^{p-1}(\delta(f))^{j-1})\phi^{-5}(\delta(f))\\
&\quad +\sum_{i=1}^{p-1}\sum_{j=1}^{p-1}p^{\frac{1}{p}+\frac{i}{p^2}+\frac{j}{p^3}+\frac{1}{p^4}+\frac{1}{p^5}}(\phi^{-2}(f))^{p-i}(\phi^{-3}(f))^{p-j}\gamma_{i,j}\\
& \quad +\sum_{i=1}^{p-1}p^{\frac{1}{p}+\frac{i+1}{p^2}}(\phi^{-2}(f))^{p-i}\gamma_{i,p} + p^{\frac{2}{p}}\gamma_{p,p}.
	\end{align*}
Suppose that $an=p^2-p$. Then
\begin{align*}
	f^{\frac{p-1}{p}}&\equiv p^{\frac{p-1}{p}}(\phi^{-2}(\delta(f)))^{p-1}+\sum_{i=1}^{p-1}p^{\frac{p-1}{p}+\frac{i}{p^2}}(\phi^{-2}(f))^{p-i}(\phi^{-2}(\delta(f)))^{p-2}\gamma_i \\
    &\pmod {(p,x_1,\dots,x_n)S^+}
\end{align*} for some $\gamma_i \in S^+$. Hence, we obtain
\begin{align*}
	f^{\frac{p-1}{p}} &\equiv p^{\frac{p-1}{p}}C_0x_1^{\frac{an}{p^2}}\cdots x_n^{\frac{an}{p^2}}+\sum_{i=1}^{p-1}p^{\frac{p-1}{p}+\frac{i}{p^2}}(\phi^{-2}(f))^{p-i}(\phi^{-2}(\delta(f)))^{p-2}\gamma_i \\
    & \pmod {(p,x_1,\dots,x_n)S^+}
\end{align*}
for some $C_0\in \Z\setminus p\Z$.
We also have
\[
	f^{\frac{1}{p^2}}\equiv \phi^{-2}(f)+p^{\frac{1}{p^2}}\phi^{-3}(\delta(f)) \pmod {p^{\frac{1}{p^2}+\frac{1}{p^3}}S^+}.
\]
Therefore,
\[
f^{\frac{p^2-a}{p^2}}=p^{\frac{p-1}{p}+\frac{p-a}{p^2}}C_0x_1^{\frac{an}{p^2}}\dots x_n^{\frac{an}{p^2}}(\phi^{-3}(\delta(f)))^{p-a} \pmod{(p^{\frac{p-1}{p}+\frac{p-a}{p^2}+\frac{1}{p^3}},x_1,\dots,x_n)S^+}.
\]
Let $r, Q$ be non-negative integers such that $p^2-ap=nQ+r$ and $0\le r <n$. If $a> 1$, then $Q< a$, and if $a=1$, then $r=0$.
We have
\begin{align*}
	&f^{\frac{p^2-a}{p^2}} \equiv p^{\frac{p-1}{p}+\frac{p-a}{p^2}}C_1x_1^{\frac{(ap+Q+1)n}{p^3}}\cdots x_r^{\frac{(ap+Q+1)n}{p^3}}x_{r+1}^{\frac{(ap+Q)n}{p^3}}\cdots x_n^{\frac{(ap+Q)n}{p^3}} \\
&\pmod{(p^{\frac{p-1}{p}+\frac{p-a}{p^2}+\frac{1}{p^3}},x_1^{\frac{(ap+Q+2)n}{p^3}},\dots,x_r^{\frac{(ap+Q+2)n}{p^3}},x_{r+1}^{\frac{(ap+Q+1)n}{p^3}},\dots, x_n^{\frac{(ap+Q+1)n}{p^3}},\\
& \qquad \qquad x_1,\dots,x_n)S^+}
\end{align*} for some $C_1\in \Z\setminus p\Z$. Note that, if $r>0$, then 
\[
(ap+Q+1)n\le an(p+1)= (p^2-p)(p+1)=p^3-p<p^3.
\]
Hence,  
\[
f^{\frac{p^2-a}{p^2}}\notin (p,x_1,\dots,x_n)B
\]
for any perfectoid BCM $S^+$-algebra $B$. Since 
\[
\frac{n-1}{n}<\frac{p^2-a}{p^2},
\]
we have $f^{(n-1)/n}\notin (p,x_1,\dots,x_n)B$. Hence, $R$ is BCM-regular. For (2), suppose that $an=p^2-p-1$. By a similar argument, there exist $\gamma'_{i,j}\in S^+$ such that 
\begin{align*}
f^{\frac{p-1}{p}} &\equiv \sum_{i=1}^{p-1}\frac{1}{p}\binom{p}{i}p^{\frac{p-1}{p}+\frac{i}{p^2}}(\phi^{-2}(\delta(f)))^{p-2}(\phi^{-3}(\delta(f)))^i(\phi^{-2}(f))^{p-i} \\
&\quad -\sum_{i=1}^{p-1}\sum_{j=1}^{p-1}\biggl(\frac{i}{p^2}\binom{p}{i}\binom{p}{j}p^{\frac{p-1}{p}+\frac{i}{p^2}+\frac{j}{p^3}}(\phi^{-2}(\delta(f)))^{p-2}(\phi^{-3}(\delta(f)))^{i-1}\\
&\quad \cdot (\phi^{-4}(\delta(f)))^{j}(\phi^{-2}(f))^{p-i}(\phi^{-3}(f))^{p-j} \biggr) \\
& \quad +\sum_{i=1}^{p-1}\sum_{j=1}^{p-1}p^{\frac{p-1}{p}+\frac{i}{p^2}+\frac{j}{p^3}+\frac{1}{p^4}}(\phi^{-2}(f))^{p-i}(\phi^{-3}(f))^{p-j}\gamma'_{i,j}\\
&\quad +\sum_{i=1}^{p-1}p^{\frac{p-1}{p}+\frac{i+1}{p^2}}(\phi^{-2}(f))^{p-i}\gamma'_{i,p}\\
&\pmod{(p,x_1,\dots,x_n)S^+}.
\end{align*}
First, suppose that $a\ge 3$. Then 
\begin{align*}
	f^{\frac{p^2-a}{p^2}} &= f^{\frac{p-1}{p}}f^{\frac{p-a}{p^2}} \\
&\equiv p^{\frac{p-1}{p}+\frac{p-a+1}{p^2}}\left(\sum_{i=1}^{p-a+1}\frac{1}{p}\binom{p}{i}\binom{p-a}{i-1}\right)\phi^{-2}((\delta(f))^{p-2}f^{p-1})(\phi^{-3}(\delta(f)))^{p-a+1} \\
&\pmod{(p^{\frac{p-1}{p}+\frac{p-a+1}{p^2}+\frac{1}{p^3}},x_1,\dots,x_n)S^+}.
\end{align*}
Consider the equation $(x+y)^{p}(x+y)^{p-a}=(x+y)^{2p-a}$ in $\Z[x,y]$. Looking at the coefficient of $x^{p-1}y^{p-a+1}$, we obtain
\[
	\sum_{i=1}^{p-a+1}\binom{p}{i}\binom{p-a}{i-1}=\binom{2p-a}{p-1} \in p\Z\setminus p^2\Z.
\]
Hence, there exists $C_2\in \Z\setminus p\Z$ such that
\begin{align*}
	f^{\frac{p^2-a}{p^2}} &\equiv p^{\frac{p-1}{p}+\frac{p-a+1}{p^2}}C_2x_1^{\frac{an}{p^2}}\cdots x_n^{\frac{an}{p^2}}(\phi^{-3}(\delta(f)))^{p-a+1}\\
&\pmod{(p^{\frac{p-1}{p}+\frac{p-a+1}{p^2}+\frac{1}{p^3}},x_1,\dots,x_n)S^+}.
\end{align*}
Since $p-a+1\le p-2$, $p(p-a+1)< p^2-p-1=an$. Let $Q$, $r$ be non-negative integers such that $p(p-a+1)=nQ+r$ and $0\le r<n$. Then $Q<a$. Note that $(ap+Q+1)n \le (p+1)an=(p+1)(p^2-p-1)<p^3$. Hence, there exists $C_3\in \Z\setminus p\Z$ such that
\begin{align*}
	f^{\frac{p^2-a}{p^2}} &\equiv p^{\frac{p-1}{p}+\frac{p-a+1}{p^2}}C_3x_1^{\frac{(ap+Q+1)n}{p^3}}\cdots x_r^{\frac{(ap+Q+1)}{p^3}}x_{r+1}^{\frac{(ap+Q)n}{p^3}}\cdots x_n^{\frac{(ap+Q)n}{p^3}} \\
&\pmod{(p^{\frac{p-1}{p}+\frac{p-a+1}{p^2}+\frac{1}{p^3}},x_1^{\frac{(ap+Q+2)n}{p^3}},\dots,x_r^{\frac{(ap+Q+2)n}{p^3}},x_{r+1}^{\frac{(ap+Q+1)n}{p^3}},\dots,x_n^{\frac{(ap+Q+1)n}{p^3}},\\
& \qquad \qquad x_1,\dots,x_r)S^+}.
\end{align*}
Hence, we have
\[
	f^{\frac{p^2-a}{p^2}}\notin (p,x_1,\dots,x_n)B
\]
for any perfectoid BCM $S^+$-algebra $B$. Therefore, $f^{(n-1)/n}\notin (p,x_1,\dots,x_n)B.$

 Next, suppose that $a\le 2$. Since $p^2-p-1$ is odd, it follows that $a=1$. If $p=3$, then there exists $C_4\in \Z\setminus p\Z$ such that
\begin{align*}
	f^{\frac{7}{9}} &\equiv 3^{\frac{2}{3}+\frac{2}{9}}C_4x_1^{\frac{5}{9}}\dots x_5^{\frac{5}{9}}\phi^{-3}(\delta(f))^2 \pmod{(3^{\frac{2}{3}+\frac{2}{9}+\frac{1}{27}},x_1,\dots,x_5)S^+}
\end{align*}
by a similar argument.
Hence, 
\begin{align*}
	f^{\frac{22}{27}} &\equiv 3^{\frac{2}{3}+\frac{2}{9}}C_4x_1^{\frac{5}{9}}\cdots x_5^{\frac{5}{9}}\phi^{-3}(\delta(f))^2\phi^{-3}(f) \\
	&\equiv 3^{\frac{2}{3}+\frac{2}{9}}C_4x_1^{\frac{5}{9}}\cdots x_5^{\frac{5}{9}}\cdot (-x_1^{\frac{10}{27}}x_2^{\frac{10}{27}}x_3^{\frac{5}{27}}x_4^{\frac{5}{27}}x_5^{\frac{5}{27}}) \\
&=-3^{\frac{2}{3}+\frac{2}{9}}C_4x_1^{\frac{25}{27}}x_2^{\frac{25}{27}}x_3^{\frac{20}{27}}x_4^{\frac{20}{27}}x_5^{\frac{20}{27}}\\
&\pmod{(3^{\frac{2}{3}+\frac{2}{9}+\frac{1}{27}},x_1,x_2, x_3^{\frac{25}{27}},x_4^{\frac{25}{27}},x_5^{\frac{25}{27}})S^+}.
\end{align*}
Hence, for any perfectoid BCM $S^+$-algebra $B$, we have $f^{22/27}\notin (3,x_1,\dots,x_5)B$.
Since $4/5<22/27$, we have $f^{4/5}\notin (3,x_1,\dots,x_5)B$.
Suppose that $p\ge 5$. Using Lemma \ref{refined p-root formula}, we can show that there exist $\gamma'_i\in S^+$ such that
\begin{align*}
 f^{\frac{1}{p^2}}&=\phi^{-2}(f)+p^{\frac{1}{p^2}}\phi^{-3}(\delta(f))-\sum_{i=1}^{p-1}\frac{1}{p}\binom{p}{i}p^{\frac{1}{p^2}+\frac{i}{p^3}}(\phi^{-4}(\delta(f)))^{i}(\phi^{-3}(f))^{p-i}\\
&+\sum_{i=1}^{p-1}p^{\frac{1}{p^2}+\frac{i}{p^3}+\frac{1}{p^4}}(\phi^{-3}(f))^{p-i}\gamma'_i+p^{\frac{2}{p^2}}\gamma'_p,
\end{align*}
and we have
\begin{align*}
f^{\frac{1}{p^3}} &\equiv \phi^{-3}(f)+p^{\frac{1}{p^3}}\phi^{-4}(\delta(f)) \pmod{(p^{\frac{1}{p^3}+\frac{1}{p^4}},x_1,\dots,x_n)S^+}.
\end{align*}
Since $(p+2)n=p^3+p^2-3p-2\ge p^3$ if $p\ge 5$, we have
\begin{align*}
f^{\frac{p^2-2}{p^2}} &= f^{\frac{p-1}{p}}f^{\frac{p-2}{p^2}} \\
& \equiv -\sum_{i=1}^{p-1}\sum_{j=1}^{p-1}\biggl(\frac{p-1}{p^2}\binom{p}{i}\binom{p}{j}\binom{p-2}{i-1}p^{\frac{p-1}{p}+\frac{p-1}{p^2}+\frac{j}{p^3}}\\
& \quad \cdot(\phi^{-2}(\delta(f)))^{p-2}(\phi^{-3}(\delta(f)))^{p-2}(\phi^{-4}(\delta(f)))^{j}(\phi^{-2}(f))^{p-1}(\phi^{-3}(f))^{p-j}\biggr)\\
& \quad +\sum_{i=1}^{p-1}\sum_{j=1}^{p-1}p^{\frac{p-1}{p}+\frac{p-1}{p^2}+\frac{j}{p^3}+\frac{1}{p^4}}(\phi^{-2}(f))^{p-1}(\phi^{-3}(f))^{p-j}\gamma''_{i,j} \\
&\pmod{(p,x_1,\dots,x_n)S^+}
\end{align*}
for some $\gamma''_{i,j}\in S^+$. Hence, 
\begin{align*}
	f^{\frac{p^3-p-2}{p^3}} &= f^{\frac{p^2-2}{p^2}}f^{\frac{p-2}{p^3}}\\
 &\equiv -\sum_{i=1}^{p-1}\sum_{j=1}^{p-1}\left(\frac{p-1}{p^2}\binom{p}{i}\binom{p}{j}\binom{p-2}{i-1}\binom{p-2}{j-1}\right)p^{\frac{p-1}{p}+\frac{p-1}{p^2}+\frac{p-1}{p^3}}\\
& \quad \cdot (\phi^{-2}(\delta(f)))^{p-2}(\phi^{-3}(\delta(f)))^{p-2}(\phi^{-4}(\delta(f)))^{p-1}(\phi^{-2}(f))^{p-1}(\phi^{-3}(f))^{p-1} \\
&\pmod{(p^{\frac{p-1}{p}+\frac{p-1}{p^2}+\frac{p-1}{p^3}+\frac{1}{p^4}},x_1,\dots,x_n)S^+}.
\end{align*}
By a similar argument as above,
\begin{align*}
\frac{p-1}{p^2}\left( \sum_{i=1}^{p-1}\binom{p}{i}\binom{p-2}{i-1}\right)\left(\sum_{j=1}^{p-1}\binom{p}{j}\binom{p-2}{j-1} \right) &=\frac{p-1}{p^2}\left(\binom{2p-2}{p-1}\right)^2 \in \Z\setminus p\Z.
\end{align*}
Since we have
\[
	f^{\frac{1}{p^4}} \equiv \phi^{-4}(f) \pmod{(p^{\frac{1}{p^4}},x_1,\dots,x_n)S^+}
\]
and
\[
(p^2+p+2)n=p^4-3p-2<p^4< p^4+p^2-4p-3=(p^2+p+3)n,
\]
we obtain
\begin{align*}
	f^{\frac{p^4-p^2-p-1}{p^4}} & =f^{\frac{p^3-p-2}{p^3}}f^{\frac{p-1}{p^4}} \\
	& \equiv p^{\frac{p-1}{p}+\frac{p-1}{p^2}+\frac{p-1}{p^3}}C_5x_1^{\frac{(p+1)n}{p^3}}\cdots x_n^{\frac{(p+1)n}{p^3}}(\phi^{-4}(\delta(f)))^{p-1}\phi^{-4}(f)^{p-1}\\
& \equiv p^{\frac{p-1}{p}+\frac{p-1}{p^2}+\frac{p-1}{p^3}}C_5x_1^{\frac{(p+1)n}{p^3}}\cdots x_n^{\frac{(p+1)n}{p^3}}\cdot C_6x_1^{\frac{2n}{p^4}}\cdots x_{p}^{\frac{2n}{p^4}}x_{p+1}^{\frac{n}{p^4}}\cdots x_n^{\frac{n}{p^4}} \\
&\pmod{(p^{\frac{p-1}{p}+\frac{p-1}{p^2}+\frac{p-1}{p^3}+\frac{1}{p^4}},x_1,\dots,x_{p},x_{p+1}^{\frac{(p^2+p+2)n}{p^4}},\dots, x_n^{\frac{(p^2+p+2)n}{p^4}})S^+}
\end{align*}
for some $ C_5, C_6\in \Z\setminus p\Z$. Hence, for any perfectoid BCM $S^+$-algebra $B$, we have
\[
	f^{\frac{p^4-p^2-p-1}{p^4}}\notin (p,x_1,\dots,x_n)B.
\]
Since 
\[
	\frac{p^4-p^2-p-1}{p^4}>\frac{n-1}{n},
\]
we obtain
\[
	f^{\frac{n-1}{n}}\notin (p,x_1,\dots,x_n)B.
\]
\end{proof}

\subsection{Negative results}
This subsection concerns Fermat-type hypersurfaces that are not BCM-regular.
\begin{prop} \label{Fermat type >=p^2}
	Let the notation be as in Setting \ref{setting Fermat type}. If $d\ge p^2$, then $R$ is not $+$-regular.
\end{prop}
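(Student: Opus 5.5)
We will take the route suggested by Remark \ref{remark +-regular and ppt}. By Proposition \ref{proposition index inequality}, applied with $a_i=p^2\le d=b_i$ (note $n+1\ge 3$) in contrapositive form, non-$+$-regularity for exponent $p^2$ implies it for every $d\ge p^2$. So we may assume $d=p^2$.

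Set $S=\Z_p\llbracket x_1,\dots,x_n\rrbracket$ and $f=x_1^{p^2}+\dots+x_n^{p^2}$. By Remark \ref{remark +-regular and ppt}, it suffices to show
\[
	f^{\frac{p^2-1}{p^2}}\in (p,x_1,\dots,x_n)S^+ .
\]

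The key observation is that $f$ is a $p^2$-th power modulo $p$. Put $g:=\phi^{-2}(f)=x_1+\dots+x_n$. Then
\[
	f-g^{p^2}=ph,\qquad h\in (x_1,\dots,x_n)^{p^2}S,
\]
since every mixed multinomial coefficient in $(x_1+\dots+x_n)^{p^2}$ is divisible by $p$, and all monomials have degree $p^2$. Write $u:=f^{1/p^2}$ and $t:=u-g$. Theorem \ref{p^e-root formula} with $e'=2$ already gives $t\in p^{1/p^2}S^+$. Expanding
\[
	u^{p^2-1}=\sum_{k=0}^{p^2-1}\binom{p^2-1}{k}g^{k}t^{p^2-1-k},
\]
every term with $k\ge1$ lies in $(x_1,\dots,x_n)S^+$. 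It therefore remains to show that $t^{p^2-1}\in(p,x_1,\dots,x_n)S^+$.

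The estimate $t\in p^{1/p^2}S^+$ alone only yields $t^{p^2-1}\in p^{(p^2-1)/p^2}S^+$, so one needs a sharper description of $t$. This is the main obstacle. The plan is to prove a refined membership of the form
\[
	t\in p^{\frac{1}{p^2}}(x_1,\dots,x_n)S^+ + p^{\frac{1}{p^2}+\epsilon}S^+
\]
for some $\epsilon>0$, and then iterate to push the $p$-exponent of the pure-$p$ part of $t^{p^2-1}$ up to $1$. To get this, use the equation
\[
	t^{p^2}+\sum_{0<k<p^2}\binom{p^2}{k}g^{p^2-k}t^{k}=ph .
\]
Since $h\in(x)^{p^2}$ and each middle term carries both a power of $p$ and a power of $g\in(x)$, one can read off successively better bounds on $t$. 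Concretely, the first thing to try is the explicit expansion of Theorem \ref{p^e-root formula} with $e'=2$ and $e$ large. For $f=\sum x_i^{p^2}$, all the explicit terms $\phi^{-(i+1)}(f)$ and $\phi^{-(i+2)}(\delta(f))$ are homogeneous of positive degree in the $x_i^{1/p^{\bullet}}$, hence lie in $(x_1,\dots,x_n)S^+$. The error terms are multiplied by $(\phi^{-3}(f))^{p-j}$, which is again in $(x)S^+$, except for $j=p$, whose $p$-exponent is $\frac{1}{p^2}+\frac{p}{p^3}=\frac{2}{p^2}$.

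So $u\equiv g+p^{2/p^2}\gamma \pmod{(x_1,\dots,x_n)S^+}$. Repeating the same analysis on the residual $\gamma$, using that it satisfies an analogous $p$-th power relation, should raise the exponent further. Once the pure-$p$ part of $t$ has exponent at least $1/(p^2-1)$, we get $t^{p^2-1}\in(p,x_1,\dots,x_n)S^+$, and the claim follows. If this iteration stalls, the fallback is to compute $u^{p^2-1}=u^{(p-1)p}\cdot u^{p-1}$ as $f^{\frac{p-1}{p}}\cdot f^{\frac{p-1}{p^2}}$. In the spirit of the proofs of Propositions \ref{Fermat type p^2-1/a}--\ref{Fermat type an=p^2-p}, we would expand the two factors with Proposition \ref{remark p-root formula} and Theorem \ref{p^e-root formula} respectively, and track the $p$-exponent of each surviving term. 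The degree $p^2$ of $f$ forces every term whose $p$-exponent is below $1$ to carry a positive power of some $x_i$.
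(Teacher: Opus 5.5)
\textbf{There is a genuine gap in the step you commit to.} Your reductions are sound. Lowering every exponent to $p^2$ via Proposition \ref{proposition index inequality}, passing to $f^{(p^2-1)/p^2}\in(p,x_1,\dots,x_n)S^+$ via Remark \ref{remark +-regular and ppt}, and discarding the terms with $k\ge 1$ in $(g+t)^{p^2-1}$ are all correct. The target $t^{p^2-1}\in(p,x_1,\dots,x_n)S^+$ is also true.

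The failure is the claim that the explicit terms of the $e'=2$ expansion lie in $(x_1,\dots,x_n)S^+$ because they are ``homogeneous of positive degree.'' A monomial $x_1^{a_1}\cdots x_n^{a_n}$ with rational exponents lies in $(x_1,\dots,x_n)S^+$ only if some $a_i\ge 1$. For $f=\sum x_i^{p^2}$, $\delta(f)$ is a combination of $\prod x_i^{p^2k_i}$ with $\sum k_i=p$ and all $k_i\le p-1$, with unit coefficients (e.g.\ $-1$ for $k=(1,p-1,0,\dots,0)$). So $\phi^{-3}(\delta(f))$ is a combination of monomials all of whose exponents $k_i/p$ are $<1$. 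Likewise $\phi^{-3}(f)=\sum x_i^{1/p}$, which multiplies the error terms. Hence the $i=1$ term $p^{1/p^2}\phi^{-3}(\delta(f))$ is exactly the leading part of $t$, and it survives modulo $(x_1,\dots,x_n)S^+$.

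For $p=n=2$ one gets $t\equiv -2^{1/4}x_1^{1/2}x_2^{1/2}\pmod{2^{3/8}S^+}$. Since $x_1^{1/2}x_2^{1/2}\notin(x_1,x_2,2^{\epsilon})S^+$ (monomial theorem in a finite extension), $t\notin 2^{1/4}(x_1,x_2)S^+ + 2^{1/4+\epsilon}S^+$ for every $\epsilon>0$. So both the congruence $u\equiv g+p^{2/p^2}\gamma \pmod{(x_1,\dots,x_n)S^+}$ and the refined membership you plan to iterate are false. No termwise control of $t$ at level $p^{-2}$ can work.

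\textbf{The fix is to work one level up.} This is your fallback, and it is essentially the paper's proof. Expand $f^{1/p}$ by Proposition \ref{remark p-root formula} with $e=1$:
\[
f^{1/p}=\phi^{-1}(f)+p^{1/p}\phi^{-2}(\delta(f))+\sum_{j=1}^{p}p^{\frac{1}{p}+\frac{j}{p^2}}(\phi^{-2}(f))^{p-j}\gamma_j .
\]
At this level $\phi^{-1}(f)=\sum x_i^{p}$, $\phi^{-2}(\delta(f))$ (integer exponents $k_i$ summing to $p$) and $\phi^{-2}(f)=\sum x_i$ genuinely lie in $(x_1,\dots,x_n)S^+$. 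Hence $f^{1/p}\equiv p^{2/p}\gamma_p$ modulo $(x_1,\dots,x_n)S^+$. It follows that $f^{(p-1)/p}\in(p^{2(p-1)/p},x_1,\dots,x_n)S^+\subseteq(p,x_1,\dots,x_n)S^+$, since $2(p-1)/p\ge 1$. Then $f^{(p^2-1)/p^2}=f^{(p-1)/p}f^{(p-1)/p^2}$ lies in this ideal as well, and the second factor needs no expansion.

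In your notation: $t^p\equiv u^p=f^{1/p}\equiv p^{2/p}\gamma_p$ modulo $(x_1,\dots,x_n)S^+$. Therefore $t^{p^2-1}=(t^p)^{p-1}t^{p-1}\in(p,x_1,\dots,x_n)S^+$. The useful cancellation only appears after raising $t$ to the $p$-th power.

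The paper does this directly. It lowers only the exponent of $x_0$ to $p$, keeps $d\ge p^2$ on $x_1,\dots,x_n$ (which is what puts these three terms in $(x)S^+$), and applies Remark \ref{remark +-regular and ppt} with $m=p$. In your proposal this route appears only as a contingency, phrased with the same incorrect ``positive power of some $x_i$'' criterion. The argument you actually commit to rests on a false claim.
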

\begin{proof}
	Let $f:=x_1^d+\dots+x_n^d$.
	It is enough to show that $\Z_p\llbracket x_0,\dots,x_n\rrbracket/(x_0^{p}+f)$ is not $+$-regular.
		By Proposition \ref{remark p-root formula},  for $1\le j \le p$, there exist $\gamma_j\in S^+$ such that 
	\[
	f^{\frac{1}{p}}=\phi^{-1}(f)+p^{\frac{1}{p}}\phi^{-2}(\delta(f))+\sum_{j=1}^{p}p^{\frac{1}{p}+\frac{j}{p^2}}(\phi^{-2}(f))^{p-j}\gamma_j
	\]
	in $S^+$. Hence, it follows that
\[
f^{\frac{1}{p}}\equiv p^{\frac{2}{p}}\gamma_p \pmod{(p,x_1,\dots,x_n)S^+}.
\]
	Therefore, $f^{(p-1)/p} \in (p,x_1,\dots,x_n)S^+,$ and $\Z_p\llbracket x_0,\dots,x_n\rrbracket/(x_0^{p}+f)$ is not $+$-regular.
\end{proof}

\begin{prop} \label{Fermat type not p^2-p-2}
	Let the notation be as in Setting \ref{setting Fermat type}. Let $a$ be a positive integer such that $1\le a \le p-1$. Suppose that 
\[
	\lceil \frac{p^2}{a+1}\rceil \le d \le n \le \lfloor \frac{p^2-p-1}{a}\rfloor-1.
\]
Then $R$ is not $+$-regular.
\end{prop}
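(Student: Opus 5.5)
We prove that $R$ is not $+$-regular by exhibiting $f^{(p-1)/p}\in (p,x_1,\dots,x_n)S^+$ for a suitable $f$, in the same way as Proposition~\ref{Fermat type >=p^2}. The key facts are a pigeonhole count on the monomials of $\delta(f)^{p-1}$ together with the root formulas of Section~3. The main obstacle is the bookkeeping of the error terms, discussed at the end.

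\emph{Reductions.} The hypothesis gives
\[
d\ge \lceil p^2/(a+1)\rceil \ge p^2/p=p .
\]
So by Proposition~\ref{proposition index inequality}, applied contrapositively (lowering exponents preserves $+$-regularity), it suffices to show that $\Z_p\llbracket x_0,\dots,x_n\rrbracket/(x_0^p+f)$ is not $+$-regular, where $f:=x_1^d+\dots+x_n^d\in S:=\Z_p\llbracket x_1,\dots,x_n\rrbracket$. By Remark~\ref{remark +-regular and ppt}, this amounts to showing
\[
f^{\frac{p-1}{p}}\in (p,x_1,\dots,x_n)S^+ .
\]
Note that $p,f$ is a regular sequence on $S$, so Proposition~\ref{remark p-root formula} applies. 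Since $d\ge p$, we have $\phi^{-1}(f)\in (x_1,\dots,x_n)S^+$. Hence
\[
f^{\frac1p}\equiv p^{\frac1p}\phi^{-2}(\delta(f))+\sum_{j=1}^{p}p^{\frac1p+\frac{j}{p^2}}(\phi^{-2}(f))^{p-j}\gamma_j \pmod{(x_1,\dots,x_n)S^+}.
\]
We then raise this to the $(p-1)$-st power and show that every resulting term lies in $(p,x_1,\dots,x_n)S^+$.

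\emph{Leading term.} This term is $p^{(p-1)/p}(\phi^{-2}(\delta(f)))^{p-1}$. By the coefficient lemma preceding Proposition~\ref{Fermat type <=p} (or directly from $f^{p(p-1)}\equiv p^{p-1}\delta(f)^{p-1}$ modulo $(x_i^{pd})$), every monomial of $\delta(f)^{p-1}$ has the form $x_1^{m_1d}\cdots x_n^{m_nd}$ with $\sum m_i=p(p-1)$. Since
\[
an\le p^2-p-1-a<p(p-1),
\]
pigeonhole gives some $m_i\ge a+1$. Then $m_id\ge (a+1)\lceil p^2/(a+1)\rceil\ge p^2$, so $\phi^{-2}$ of that monomial is divisible by $x_i$. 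Thus the leading term lies in $(x_1,\dots,x_n)S^+$.

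\emph{Mixed terms.} A general term has the shape
\[
p^{\frac{p-1}{p}+\frac{\sum_l j_l}{p^2}}\,(\phi^{-2}\delta(f))^{p-1-k}\prod_{l=1}^{k}(\phi^{-2}f)^{p-j_l}\gamma_{j_l}.
\]
If $\sum_l j_l\ge p$, the $p$-power is at least $1$ and the term vanishes modulo $p$. Otherwise, the same pigeonhole count applies to the explicit part. Its total ``weight'' $\sum m_i$ (each factor $\phi^{-2}f$ contributing $1$, each factor $\phi^{-2}\delta(f)$ contributing $p$) equals
\[
p(p-1-k)+\sum_l(p-j_l)=p(p-1)-\sum_l j_l .
\]
We need this weight to be at least $an+1$ to force an exponent $\ge 1$. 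The hypothesis $n\le\lfloor (p^2-p-1)/a\rfloor-1$ gives $an\le p^2-p-1-a$. This handles all terms with $\sum_l j_l\le a$.

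\emph{Main obstacle.} The difficulty is the remaining terms with $a<\sum_l j_l<p$, where the unknown $\gamma_j$ prevent a direct count. Here I would first replace Proposition~\ref{remark p-root formula} by the refined Lemma~\ref{refined p-root formula} for $p\ge3$ (treating $p=2$ separately, where $a=1$ and the range is tiny). In that lemma the $p^{1/p+i/p^2}$-coefficient is explicit: $-\frac1p\binom pi(\phi^{-3}\delta(f))^i(\phi^{-2}f)^{p-i}$. Since $\phi^{-3}\delta(f)$ has $x$-degree comparable to $\phi^{-2}f$, it restores the lost weight. The unknown remainders then carry $p$-exponent at least $\frac1p+\frac{i}{p^2}+\frac1{p^3}$ or $\frac2p$, which, after multiplying by the other $p-2$ factors, should either reach $p^1$ or again admit the weight count. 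I expect this step to need a careful case analysis on how many factors come from $\gamma$-terms, and it is where the ``$-1$'' in the upper bound for $n$ should be used.
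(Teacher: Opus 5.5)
\textbf{There is a genuine gap: reducing to $x_0^p$ replaces the proposition by a statement that is false in part of the hypothesis range.} The reduced claim $f^{\frac{p-1}{p}}\in(p,x_1,\dots,x_n)S^{+}$ fails, so your ``main obstacle'' cannot be overcome by any refinement of the root formula.

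Take $p=5$, $a=1$, $d=13$, $n=18$. This is allowed, since $\lceil 25/2\rceil=13$ and $\lfloor 19/1\rfloor-1=18$. Put $D_k=\phi^{-k}(\delta(f))$ and $F_k=\phi^{-k}(f)$. Lemma \ref{refined p-root formula} gives $f^{1/5}=F_1+5^{1/5}(D_2+Y)$ with
\[
Y\equiv -5^{\frac{1}{25}}D_3F_2^4-2\cdot 5^{\frac{2}{25}}D_3^2F_2^3 \pmod{5^{\frac{1}{25}+\frac{1}{125}}F_2^4S^{+}+5^{\frac{11}{125}}S^{+}}.
\]
The polynomials $\delta(f)^4$ and $\delta(f)^3f^4$ have degree at least $19$ in the $18$ variables $x_i^{13}$, so their images under $\phi^{-2}$ lie in $(x_1,\dots,x_{18})S^{+}$. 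Also $F_2^5=F_1-5D_2$. Hence everything in $5^{4/5}(D_2+Y)^4$ dies except the $i=2$ term you hoped the refined lemma would kill:
\[
f^{\frac45}\equiv -8\cdot 5^{\frac45+\frac{2}{25}}\,\phi^{-2}\bigl(\delta(f)^3f^3\bigr)\,\phi^{-3}\bigl(\delta(f)^2\bigr)\pmod{(5^{\frac45+\frac{11}{125}},x_1,\dots,x_{18})S^{+}}.
\]
By the coefficient lemma, $\phi^{-2}(\delta(f)^3f^3)\equiv C\,x_1^{13/25}\cdots x_{18}^{13/25}$ modulo $(x_1,\dots,x_{18})S^{+}$, with $5\nmid C$. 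The factor $\phi^{-3}(\delta(f)^2)$ contains $x_1^{52/125}x_2^{52/125}x_3^{26/125}$ with coefficient $126$. Modulo $(x_1,x_2,x_3^{104/125},x_4^{78/125},\dots,x_{18}^{78/125})$, every other surviving monomial vanishes.

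The monomial argument of Proposition \ref{Fermat type an=p^2-p} then gives $f^{4/5}\notin(5,x_1,\dots,x_{18})B$ for every perfectoid BCM $S^{+}$-algebra $B$. By Proposition \ref{BCM-regularity and brt}, the ring $\Z_5\llbracket x_0,\dots,x_{18}\rrbracket/(x_0^5+x_1^{13}+\dots+x_{18}^{13})$ is BCM-regular, hence $+$-regular. The $i=3,4$ terms do the same for $n=17,16$. The weight that $\phi^{-3}(\delta(f))$ ``restores'' sits at the finer level $x_i^{13/125}$, where pigeonhole no longer reaches exponent $1$. Lowering $d$ to $p$ throws away exactly the room the proof needs.

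\textbf{The missing idea is to keep the exponent $d$ and use the gap between $\frac{p-1}{p}$ and $\frac{d-1}{d}$.} The paper proves
\[
f^{\frac{p^2-a-1}{p^2}}=f^{\frac{p-1}{p}}f^{\frac{p-a-1}{p^2}}\in(p,x_1,\dots,x_n)S^{+}.
\]
It uses Proposition \ref{remark p-root formula} for $f^{1/p}$, and only $f^{1/p^2}\equiv\phi^{-2}(f)\pmod{p^{1/p^2}S^{+}}$ for the second factor.

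The extra factor $(\phi^{-2}(f))^{p-a-1}$ raises your weight count by $p-a-1$. Consider a term with $p$-exponent $\frac{p-1}{p}+\frac{m}{p^2}<1$. Here $m$ counts the $j_l$ together with each use of the error $f^{1/p^2}-\phi^{-2}(f)$; each such use trades one factor $\phi^{-2}(f)$ for $p^{1/p^2}$. Such a term has weight $p^2-a-1-m\ge p^2-p-a\ge an+1$. The extreme case is $\delta(f)^{p-2}f^{p-a}$, and this is where $n\le\lfloor (p^2-p-1)/a\rfloor-1$ is used.

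Your pigeonhole with $(a+1)d\ge p^2$ then puts every such term in $(x_1,\dots,x_n)S^{+}$, so the unknown $\gamma_j$ never need to be controlled. Finally, $\frac{p^2-a-1}{p^2}\le\frac{d-1}{d}$ because $d\ge p^2/(a+1)$. Combined with Remark \ref{remark +-regular and ppt}, this shows $R$ is not $+$-regular.
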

\begin{proof}
	Suppose that $p$, $d$ and $n$ satisfy the above condition.
Let $f:=x_1^d+\dots+x_n^d$. By Proposition \ref{remark p-root formula}, there exist $\gamma_j \in S^+$ such that
\[
	f^{\frac{1}{p}}=\phi^{-1}(f)+p^{\frac{1}{p}}\phi^{-2}(\delta(f))+\sum_{j=1}^{p}p^{\frac{1}{p}+\frac{j}{p^2}}(\phi^{-2}(f))^{p-j}\gamma_j
\]
in $S^+$. Since $\phi^{-1}(f) \in (x_1,\dots,x_n)S^+$, we have
\begin{align*}
	f^{\frac{p-1}{p}} &\equiv p^{\frac{p-1}{p}}(\phi^{-2}(\delta(f)))^{p-1}-\sum_{j=1}^{p-1}p^{\frac{p-1}{p}+\frac{j}{p^2}}(\phi^{-2}(\delta(f)))^{p-2}(\phi^{-2}(f))^{p-j}\gamma_j \\
&\pmod {(p,x_1,\dots,x_n)S^+}.
\end{align*}
Since $p(p-1)\ge an+1$ and $d(a+1)\ge p^2$, we have $\delta(f)^{p-1}\in (p,x_1^{p^2},\dots,x_n^{p^2})S$. Hence, we obtain
\begin{align*}
	f^{\frac{p-1}{p}} &\equiv -\sum_{j=1}^{p-1}p^{\frac{p-1}{p}+\frac{j}{p^2}}(\phi^{-2}(\delta(f)))^{p-2}(\phi^{-2}(f))^{p-j}\gamma_j \\
&\pmod {(p,x_1,\dots,x_n)S^+}.
\end{align*}
By Theorem \ref{p^e-root formula}, 
\begin{align*}
	f^{\frac{1}{p^2}} &\equiv \phi^{-2}(f) \pmod{p^{\frac{1}{p^2}}S^+}.
\end{align*}
Since $an+1\le p^2-p-a$, we have $\delta(f)^{p-2}f^{p-a} \in(x_1^{p^2},\dots,x_n^{p^2})$.
Hence,
\begin{align*}
	f^{\frac{p^2-a-1}{p^2}} &= f^{\frac{p-1}{p}} f^{\frac{p-a-1}{p^2}}\in (p,x_1,\dots,x_n)S^+.
\end{align*}
Since 
\[
	\frac{p^2-a-1}{p^2} \le\frac{d-1}{d},
\]
we have
\[
	f^{\frac{d-1}{d}}\in (p,x_1,\dots,x_n)S^+,
\]
which implies that $R$ is not $+$-regular.
\end{proof}

\subsection{Main Theorem}
Summarizing the above propositions, we obtain the main theorem.
\begin{thm} \label{thm Fermat type}
	Let the notation be as in Setting \ref{setting Fermat type}.
\begin{enumerate}
	\item $R$ is BCM-regular if one of the following conditions holds.
		\begin{enumerate}
			\item $d\le \min \{n,  p\}$.
			\item $d\le n$ and there exists a positive integer $a$ such that $1\le a \le p-1$ and one of the following holds:
			\begin{enumerate}
				\item $n=\lfloor (p^2-1)/a\rfloor.$
				\item $an=p^2-p$.
				\item $an=p^2-p-1$.
				\item $an=p^2-a-1$.
			\end{enumerate}
		\end{enumerate}
	\item $R$ is not $+$-regular if one of the following conditions holds.
		\begin{enumerate}
			\item $d>n$.
			\item $d\ge p^2$.
			\item There exists a positive integer such that $1\le a \le p-1$ and
			\[
				\lceil \frac{p^2}{a+1} \rceil \le d \le n \le \lfloor \frac{p^2-p-1}{a}\rfloor-1. 
			\]
		\end{enumerate}
\end{enumerate}
\end{thm}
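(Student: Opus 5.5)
The theorem is essentially a compilation of the propositions of this section, so the plan is to match each clause with the corresponding earlier result. The only work beyond citation is two reductions: passing from the diagonal equation $d=n$ to general $d\le n$, and handling the case $d>n$.

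\textbf{Part (1).} Condition (a), $d\le\min\{n,p\}$, is treated in the proof of Proposition \ref{Fermat type <=p}. That proof reduces to the case $d$ equal to the smaller of $n$ and $p$, and then exhibits a nonvanishing socle term. I would check that the reduction via Proposition \ref{proposition index inequality} covers both subcases: $n\le p$, and $n>p$ with $d\le p$. In the second subcase I would compare with $\Z_p\llbracket x_0,\dots,x_n\rrbracket/(x_0^p+\dots+x_n^p)$; if the argument in Proposition \ref{Fermat type <=p} only covered $n\le p$, I would redo its coefficient count with exponent $p$ in place of $n$.

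Condition (b)(i) is Proposition \ref{Fermat type p^2-1/a}. Conditions (b)(ii) and (b)(iii) are Proposition \ref{Fermat type an=p^2-p}. Condition (b)(iv) is Proposition \ref{Fermat type an=p^2-a-1} when $p\ge3$. For $p=2$, (b)(iv) forces $a=1$ and $n=1$, which is excluded since $n\ge2$. In every case the passage from $d=n$ to $d\le n$ is Proposition \ref{proposition index inequality}, applied with $b_i=n$ and $a_i=d$. That proposition needs at least three variables, which holds because there are $n+1\ge3$ of them.

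\textbf{Part (2).} Condition (b) is Proposition \ref{Fermat type >=p^2}, and condition (c) is Proposition \ref{Fermat type not p^2-p-2}. For condition (a), $d>n$, I would apply Corollary \ref{+-regular and canonical diagonal hypersurface} to the $n+1$ variables $x_0,\dots,x_n$, all with exponent $d$. If $R$ were $+$-regular, the corollary would give
\[
\frac{n+1}{d}>1,
\]
that is, $d\le n$, contradicting $d>n$.

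\textbf{Main obstacle.} The only delicate point is the bookkeeping in (1)(a), checking that the reductions really cover $d\le\min\{n,p\}$ when $n>p$. Everything else is direct citation.
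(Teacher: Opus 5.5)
Your proposal follows the paper's proof, which is exactly this clause-by-clause citation. That includes (2)(a) via Corollary \ref{+-regular and canonical diagonal hypersurface} with $n+1$ exponents all equal to $d$, and the passage from $d=n$ to $d\le n$ via Proposition \ref{proposition index inequality}.

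Your caution in (1)(a) is warranted. The paper only cites Proposition \ref{Fermat type <=p}, whose hypothesis is $d\le n\le p$, so the subcase $d\le p<n$ is not literally covered. Your fallback does close it:
\begin{itemize}
\item Take $f=x_1^p+\dots+x_n^p$. Then $\phi^{-1}(f)\in(x_1,\dots,x_n)S^+$.
\item Lemma \ref{p-root formula} therefore gives $f^{\frac{p-1}{p}}\equiv p^{\frac{p-1}{p}}(\phi^{-2}(\delta(f)))^{p-1}$ modulo $(p^{\frac{p-1}{p}+\frac{1}{p^2}},x_1,\dots,x_n)S^+$.
\item The monomial $x_1^{p(p-1)}\cdots x_p^{p(p-1)}$ occurs in $(\delta(f))^{p-1}$ with coefficient $\frac{(p(p-1))!}{(-p)^{p-1}((p-1)!)^p}$. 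This is a $p$-adic unit by the unlabelled coefficient lemma preceding Proposition \ref{Fermat type <=p}, taken with $s=p-1$, $t=0$.
\item That monomial lies outside $(x_1^{p^2},\dots,x_n^{p^2})$, so Proposition \ref{BCM-regularity and brt} with $m=p$ applies, followed by Proposition \ref{proposition index inequality}.
\end{itemize}
The alternative is to append variables to the case $n=p$, which is how the paper passes to $(\dots,*)$ in Theorem \ref{theorem classification} using \cite[Theorem 6.27]{Ma-Schwede21}. Your computation avoids that input and stays within the $p$-th root machinery of Section 3.

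One slip needs fixing. For $p=2$ in (1)(b)(iv), $a=1$ gives $n=p^2-a-1=2$, not $n=1$. So this case is not vacuous, and it is not covered by Proposition \ref{Fermat type an=p^2-a-1}, which assumes $p\ge 3$. The slip is harmless: $n=2$ and $d\le 2=\min\{n,p\}$ put the case under (1)(a), indeed under Proposition \ref{Fermat type <=p} as stated. The same remark handles the $p=2$ instance ($n=2$) of (1)(b)(ii). Its proof in Proposition \ref{Fermat type an=p^2-p} goes through Lemma \ref{refined p-root formula}, which is stated only for $p\ge 3$.
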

\begin{proof}
	\begin{enumerate}
	\item	\begin{enumerate}
		\item This follows from Proposition \ref{Fermat type <=p}.
		\item \begin{enumerate}
			\item This follows from Proposition \ref{Fermat type p^2-1/a}.
			\item This follows from Proposition \ref{Fermat type an=p^2-p}.
			\item This also follows from Proposition \ref{Fermat type an=p^2-p}.
			\item This follows from Proposition \ref{Fermat type an=p^2-a-1}.
			\end{enumerate}
		\end{enumerate}
		\item
			\begin{enumerate}
			\item This follows from Corollary \ref{+-regular and canonical diagonal hypersurface}.
			\item This follows from Proposition \ref{Fermat type >=p^2}.
			\item This follows from Proposition \ref{Fermat type not p^2-p-2}.
			\end{enumerate}
	\end{enumerate}
\end{proof}

\begin{eg} \label{eg p=3}
With notation as in Setteing \ref{setting Fermat type}, suppose that $p=3$. $R$ is BCM-regular if and only if $d \le \min\{n, 8\}$.

Indeed, if $d \ge p^2=9$, then $R$ is not BCM-regular by (2)(a).	
Hence, we may assume that $d=n<9$. By (1)(a), $R$ is BCM-regular if $n\le 3$. By (1)(b)(i), $R$ is BCM-regular if $n=4,8$. By (1)(b)(ii)and (iii), $R$ is BCM-regular if $n=5,6$. By (1)(b)(iv), $R$ is BCM-regular if $n=7$.
\end{eg}
\begin{eg} \label{eg p=5}
With notation as in Setting \ref{setting Fermat type}, suppose that $p=5$ and $(d,n)\neq (21,21), (21,22),(22,22)$. Then $R$ is BCM-regular if and only if $d$ and $n$ satisfy one of the following conditions:
\begin{enumerate}
	\item $d\le \min\{12,n\}.$
	\item $d\le \min\{24,n\}$ and $19\le n$.
\end{enumerate}
\end{eg}
Since the case $(d,n)=(9,9)$ does not follow from the above theorem, we need the following lemma.
\begin{lem}
	With notation as in Setting \ref{setting Fermat type}, suppose that $p=5$, $d=n=9$. Then $R$ is BCM-regular.
\end{lem}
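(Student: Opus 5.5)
By Proposition \ref{BCM-regularity and brt}, applied with $S=\Z_5\llbracket x_1,\dots,x_9\rrbracket$, $f=x_1^9+\dots+x_9^9$ and $m=9$, it suffices to exhibit an exponent $t>8/9$ such that $f^{t}\notin(5,x_1,\dots,x_9)B$ for every perfectoid BCM $S^+$-algebra $B$. We follow the pattern of Propositions \ref{Fermat type p^2-1/a}--\ref{Fermat type an=p^2-p}: expand $f^{1/5}$, $f^{1/25}$, $f^{1/125},\dots$ via Proposition \ref{remark p-root formula}, Lemma \ref{refined p-root formula} and Theorem \ref{p^e-root formula}, and isolate a single surviving monomial. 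A natural target is $t=\frac{4}{5}+\frac{a'}{25}+\frac{b'}{125}+\cdots$ with leading part slightly above $8/9\approx 0.8889$. For instance $\frac{4}{5}+\frac{2}{25}=0.88<8/9$, so we need $\frac{4}{5}+\frac{2}{25}+\frac{c}{125}+\cdots$ with $c\ge 2$ (since $\frac{4}{5}+\frac{2}{25}+\frac{2}{125}=0.896$). This is the case $n=9$, which lies between $a=2$ ($2n=18<20=p^2-p$) and the range already treated.

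Step 1. From Lemma \ref{refined p-root formula},
\[
f^{4/5}\equiv 5^{4/5}(\phi^{-2}(\delta(f)))^{4}+\sum_i 5^{\frac45+\frac{i}{25}}(\cdots)\pmod{(5,x_1,\dots,x_9)S^+}.
\]
The polynomial $\delta(f)^4$ has degree $20\cdot 9/5\cdot$ in the variables $x_i^9$; by the coefficient lemma (with $s=4$, $t=0$, $\sum m_i=20$, $m_i<5$) it contains a monomial $\prod x_i^{9m_i}$ with unit coefficient. Distributing $20$ among $9$ variables gives $m_i\in\{2,3\}$: two variables get $3$ and seven get $2$. After $\phi^{-2}$, these become exponents $27/25$ and $18/25$. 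The exponent $27/25$ exceeds $1$, so this leading term dies modulo $(x_1,\dots,x_9)$. Hence the leading contribution must come instead from the correction terms of order $5^{\frac45+\frac{i}{25}}$, which is why the refined lemma is needed, exactly as in the case $an=p^2-p-1$ of Proposition \ref{Fermat type an=p^2-p}.

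Step 2. We track the terms $\frac{1}{5}\binom{5}{i}5^{\frac45+\frac{i}{25}}(\phi^{-2}\delta(f))^{3}(\phi^{-3}\delta(f))^{i}(\phi^{-2}f)^{5-i}$ and multiply by $f^{c/25}\equiv(\phi^{-2}f)^{c}+5^{1/25}\phi^{-3}(\delta f)+\cdots$. We choose the power of $5$, say $5^{\frac45+\frac{2}{25}}$ or $5^{\frac45+\frac{3}{25}}$, and collect all contributions at that power. Using Vandermonde identities of the form $\sum_i\binom{5}{i}\binom{k}{i-1}=\binom{5+k}{4}$, we check that the total coefficient is $5\cdot(\text{unit})$, cancelling the $\frac15$. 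The $\phi^{-2}$-part is $\delta(f)^3 f^{5-i+c}$ of total weight $15+5-i+c$, which we want to distribute as $m_j\le 2$ (exponents $\le 18/25$). The $\phi^{-3}$-part is then handled by the same digit-by-digit bookkeeping as in Proposition \ref{Fermat type an=p^2-a-1}, and we continue to the $1/125$ and $1/625$ digits to push $t$ past $8/9$ while keeping every exponent sum below $1$.

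Step 3. As in the preceding proofs, we conclude that the surviving term is $5^{\gamma}\cdot(\text{unit})\cdot x^{\mathbf e}$ with all $e_j<1$, modulo an ideal $(5^{\gamma+\epsilon},x_j^{e'_j})$ with $e'_j>e_j$. Since $B$ is BCM and $5,x_1,\dots,x_9$ is a system of parameters, this element does not lie in $(5,x_1,\dots,x_9)B$. Because $t>8/9$, we get $f^{8/9}\notin(5,x_1,\dots,x_9)B$ as well.

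The main obstacle is Step 2. We must choose the digits $(a',c,\dots)$ of $t$ and the $5$-power level so that exactly one monomial class survives with a $5$-adic unit coefficient. Cancellation among the many cross terms, including the unknown $\gamma_{i,j}$, has to be ruled out by showing that those terms are either of higher $5$-order or lie in $(x_1,\dots,x_9)$. We would first try $t=\frac45+\frac{2}{25}+\frac{4}{125}+\frac{4}{625}$ and verify the digit bookkeeping by hand.
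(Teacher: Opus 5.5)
There is a genuine gap: your reduction via Proposition \ref{BCM-regularity and brt} and your Step~1 are correct, but Step~2 is the entire argument and it is not carried out. You flag it yourself as the main obstacle. No $t$ is fixed and no surviving monomial is exhibited. Write $E:=f^{1/5}-\phi^{-1}(f)$, so that $f^{4/5}\equiv E^4 \bmod (x_1,\dots,x_9)S^+$. You do not show why the cross terms of $E^4$ containing two or more correction terms drop out. The unknown $\gamma$'s of Lemma \ref{refined p-root formula} are never controlled.

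Your tentative choices are also off. Since $\frac45+\frac1{25}+\sum_{k\ge3}\frac4{5^k}=0.88<\frac89$, the $5^{-2}$-digit of $t$ must be at least $2$. But then nothing survives in $f^{22/25}$ at the powers $5^{4/5+2/25}$ or $5^{4/5+3/25}$ that you propose. Likewise, your sum $\sum_i\binom5i\binom k{i-1}$ belongs to the power $5^{(k+1)/25}$, whereas the relevant one is $5^{(k+2)/25}$.

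Your route can nonetheless be completed; the bookkeeping goes as follows.
\begin{itemize}
\item Give $A=\phi^{-2}(\delta(f))$, $F=\phi^{-2}(f)$, $\Delta=\phi^{-3}(\delta(f))$ the weights $5,1,0$. With nine variables, $9m_i/25<1$ forces $m_i\le 2$. So $\phi^{-2}(g)\in(x_1,\dots,x_9)S^+$ whenever $g$ has weight $\ge 19$.
\item In Lemma \ref{refined p-root formula}, the terms $A$ and $5^{i/25}\frac15\binom5i\Delta^iF^{5-i}$ of $E/5^{1/5}$ have weight plus $25\cdot$(exponent of $5$) equal to $5$. The terms $F$ and $5^{1/25}\Delta$ of $f^{1/25}$ have this quantity equal to $1$. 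Every other term, including all $\gamma$'s, has it strictly larger or pushes the total $5$-exponent to $\ge 1$.
\item Hence in $f^{22/25}\equiv E^4(f^{1/25})^2$, every term with $5$-exponent below $24/25$ lies in $(x_1,\dots,x_9)S^+$. At $5^{24/25}$ only these ``tight'' terms contribute, and they land on $x_1^{18/25}\cdots x_9^{18/25}\Delta^4$.
\item The coefficient of $\prod x_i^{18}$ in $\delta(f)^af^{18-5a}$ is $N(-5)^{-a}$ with $N=18!/2^9$ and $v_5(N)=3$. So modulo $5$ only terms with three factors $A$ count. They give $-4\cdot\frac15\binom{7}{3}\cdot N(-5)^{-3}=28N/125$, which is a unit.
\item Multiply by $f^{2/125}\equiv\phi^{-3}(f)^2 \pmod{5^{1/125}S^+}$ and choose $m=(3,3,3,3,2,2,2,2,2)$ in $\delta(f)^4f^2$. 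This leaves $5^{24/25}x_1^{117/125}\cdots x_4^{117/125}x_5^{108/125}\cdots x_9^{108/125}$ times a unit.
\item Since $\frac{112}{125}>\frac89$, this finishes the proof. Your $t=0.9184$ works the same way.
\end{itemize}

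The paper avoids these expansions altogether. It invokes \cite[Theorem 6.4]{Yoshikawa25} with splitting-order sequence $(0,4,4,0)$. It then bounds the lowest degree of the Frobenius kernel on the top local cohomology of $R/5R$ by $-72>-125$, using $(x_0^8,\dots,x_9^8)\subseteq\tau$.
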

\begin{proof}
	We use \cite[Theorem 6.4]{Yoshikawa25}. We can check $(s_0,s_1,s_2,s_3)=(0,p-1,p-1,0)$ (for the definition of $s_i$, see Definition \ref{def of splitting-order sequences}). Let $\overline{R}:=R/pR$, $K:=\operatorname{Ker}(F:H_\m^d(\overline{R})\to H_\m^d(\overline{R}))$ and $t:=\min\{l\in \Z\mid K_l\neq 0\}.$ It is enough to show the following claim.
\begin{cl}
	$t>-125$.
\end{cl}
\begin{clproof}
	Let $S:=\mathbb{F}_5[x_0,\dots,x_9]/(x_0^9+\dots+x_9^9)$. Then $K=\operatorname{Ker}(F:H_\m^d(S)\to H_\m^d(S))$.
	Since $S$ is Gorenstein and $\omega_S\cong S(-1)$, we have an exact sequence
	\[
		 F_*(S(-1)) \to S(-1) \to K^{\vee}\to 0,
	\]
	where $K^{\vee}$ is the Matlis dual of $K$. Let $I:=\operatorname{Im}(\operatorname{Tr}_F:F_*S\to S)$. Then $\tau(S)\subseteq I$, where $\tau(S)$ denotes the test ideal of $S$. By \cite[Example 3.12]{Huneke98}, $(x_0^8,\dots,x_9^8)\subseteq \tau(S)$. Hence, $S_{\ge 71}\subseteq \tau(S)\subseteq I$, and we have
\[
	(K^{\vee})_{\ge 72} \cong [(S/I)(-1)]_{\ge 72} =0.
\]
Therefore, $t> -72$. 
\end{clproof}
\end{proof}

\begin{ques}
Is $R$ BCM-regular when $(d,n)=(21,21),(21,22),(22,22)$ ?
\end{ques}

\section{Plus-pure thresholds of hypersurfaces}
In this section, we use our formula to estimate plus-pure thresholds in several cases.

\subsection{Plus-pure thresholds of diagonal hypersurfaces}
First, we establish lower bounds for plus-pure thresholds of diagonal hypersurfaces in terms of $p$-adic expansions.

The following version of the $p$-th root formula will be needed in the proof of the theorem.
\begin{lem} \label{p-th root formula for ppt}
	Let $p$ be a prime number, $n\ge 1$ be an integer, $S:=\Z_p\llbracket x_1,\dots,x_n\rrbracket$ and $f\in S$. For any $e'\ge1$ and $e\ge 1$, there exist $\beta_{e,e'}, \gamma_{e,e'}\in S^+$ such that
	\begin{align*}
		f^{\frac{1}{p^{e'}}} & \equiv \phi^{-e'}(f)+\sum_{i=1}^{e} (-1)^{i-1}p^{\frac{1}{p^{e'}}+\dots+\frac{1}{p^{e'+i-1}}} (\phi^{-(e'+i-1)}(f))^{p^{i-1}-1}\phi^{-(e'+i)}(\delta(f)) \\
		&+p^{\frac{1}{p^{e'}}+\dots+\frac{1}{p^{e'+e}}}(\phi^{-(e'+e)}(f))^{p^{e'+e-1}-1}\beta_{e,e'} +p^{\frac{1}{p^{e'}}+\dots+\frac{1}{p^{e'+e-1}}+\frac{2}{p^{e'+e}}}\gamma_{e,e'}.
	\end{align*}
\end{lem}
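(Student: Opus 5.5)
The plan is to prove the case $e'=1$ by induction on $e$, refining the Claim in the proof of Lemma \ref{p-root formula}, and then to pass to general $e'$ by the Frobenius-descent argument of Theorem \ref{p^e-root formula}. Write
\[
\alpha_e:=\phi^{-1}(f)+\sum_{i=1}^{e}(-1)^{i-1}p^{\frac1p+\dots+\frac1{p^i}}(\phi^{-i}(f))^{p^{i-1}-1}\phi^{-(i+1)}(\delta(f)),
\]
$\epsilon_e:=f^{1/p}-\alpha_e$, and $s_e:=\frac1p+\dots+\frac1{p^{e+1}}$. For $e'=1$ the goal is
\[
\epsilon_e\in p^{s_e}(\phi^{-(e+1)}(f))^{p^{e}-1}S^+ + p^{s_e+\frac1{p^{e+1}}}S^+ .
\]
(Note that $s_e-\frac1{p^{e+1}}+\frac2{p^{e+1}}=s_e+\frac1{p^{e+1}}$.) The inputs from that Claim are:
\[
\epsilon_e^p\equiv -p(\phi^{-1}(f))^{p-1}\epsilon_{e-1}\pmod{p^{1+\frac2p}S^+},\qquad \epsilon_e\in p^{s_e}S^+ .
\]
The identity $ps_e=1+s_{e-1}$ makes the exponents match.

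For the inductive step (and the base case $e=1$, using $\epsilon_0\in p^{1/p}S^+$ with trivial $f$-factor), assume
\[
\epsilon_{e-1}=p^{s_{e-1}}(\phi^{-e}(f))^{p^{e-1}-1}\beta+p^{s_{e-1}+\frac1{p^e}}\gamma .
\]
For $g\in S$ we have $g^{p^k}\equiv\phi^k(g)\pmod{pS}$; applying $\phi^{-e}$ gives $(\phi^{-e}(f))^{p^{e-1}}\equiv\phi^{-1}(f)\pmod{pS^+}$. Hence
\[
(\phi^{-1}(f))^{p-1}(\phi^{-e}(f))^{p^{e-1}-1}\equiv(\phi^{-e}(f))^{p^e-1}\pmod{pS^+}.
\]
Since $s_{e-1}+\frac1{p^e}\le\frac2p$, the error $p^{1+2/p}S^+$ is absorbed, and we obtain
\[
\epsilon_e^p=p^{ps_e}(\phi^{-e}(f))^{p^e-1}b+p^{ps_e+\frac1{p^e}}c
\]
for some $b,c\in S^+$.

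To extract a $p$-th root, set $y:=p^{s_e}(\phi^{-(e+1)}(f))^{p^e-1}b^{1/p}\in S^+$. Using $(\phi^{-(e+1)}(f))^p\equiv\phi^{-e}(f)\pmod{pS^+}$, we get $y^p\equiv p^{ps_e}(\phi^{-e}(f))^{p^e-1}b\pmod{p^{1+ps_e}S^+}$, so $\epsilon_e^p-y^p\in p^{ps_e+1/p^e}S^+$. Both $\epsilon_e$ and $y$ lie in $p^{s_e}S^+$, so the mixed binomial terms of $(\epsilon_e-y)^p$ lie in $p^{1+ps_e}S^+$. Therefore $(\epsilon_e-y)^p\in p^{ps_e+1/p^e}S^+$. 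Because $S^+$ is absolutely integrally closed, this forces $\epsilon_e-y\in p^{s_e+1/p^{e+1}}S^+$, which gives $\beta_{e,1}=b^{1/p}$ and the required $\gamma_{e,1}$.

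For general $e'$, I would apply $\phi^{-(e'-1)}$ to the $e'=1$ formula (with $\beta,\gamma$ replaced by $p^{e'-1}$-th roots) to get a candidate $\alpha_{e,e'}$. As in Theorem \ref{p^e-root formula}, $(f^{1/p^{e'}}-\alpha_{e,e'})^{p^{e'-1}}$ is congruent mod $pS^+$ to the $e'=1$ error, which already has the prescribed shape; this yields $f^{1/p^{e'}}-\alpha_{e,e'}\in p^{1/p^{e'-1}}S^+$. The main obstacle is precisely this descent: it only gives precision $p^{1/p^{e'-1}}$, so I must check that this precision is at least the target exponent $\frac1{p^{e'}}+\dots+\frac1{p^{e'+e-1}}+\frac2{p^{e'+e}}$. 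Factoring out $\frac1{p^{e'-1}}$, this amounts to
\[
\frac1p+\dots+\frac1{p^{e}}+\frac2{p^{e+1}}\le1 .
\]
For $p=2$ this is an equality, and for $p\ge3$ it is strict. If the boundary case $p=2$ turns out to be delicate, I would instead run the same root-extraction argument directly at level $e'$ rather than descending.
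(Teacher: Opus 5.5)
Your proposal is correct and is essentially the paper's proof. You reduce to $e'=1$ by the Frobenius descent from the proof of Theorem \ref{p^e-root formula}; the case $p=2$ is not delicate, since equality of the exponents already gives $p^{1/p^{e'-1}}S^+\subseteq p^{\frac{1}{p^{e'}}+\dots+\frac{1}{p^{e'+e-1}}+\frac{2}{p^{e'+e}}}S^+$. You then induct on $e$ via the $p$-th power recursion modulo $p^{1+\frac{2}{p}}S^+$, and your use of the consecutive-error congruence from the Claim in Lemma \ref{p-root formula}, with the explicit root extraction, makes precise the step the paper summarizes as ``therefore there exist $\beta_{e+1},\gamma_{e+1}$''.

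One caveat: the descent produces the factor $(\phi^{-(e'+e)}(f))^{p^{e}-1}$, which agrees with the printed $(\phi^{-(e'+e)}(f))^{p^{e'+e-1}-1}$ only for $e'=1$, so your descended formula does not literally ``already have the prescribed shape''. The printed exponent is a typo, and it already fails for $e'=2$, $e=1$, $f=x_1+x_2$. The exponent $p^{e}-1$ is exactly what the paper's own reduction gives and what Theorem \ref{diagonal hypersurface brt} uses.
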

\begin{proof}
	As in the proof of Theorem \ref{p^e-root formula}, it is enough to show the case where $e'=1$. We show the theorem by induction on $e$. If $e=1$, then there exist $\beta_1, \gamma_1\in S^+$ such that
	\begin{align*}
		f^{\frac{1}{p}}=\phi^{-1}(f)+p^{\frac{1}{p}}\phi^{-2}(\delta(f))+p^{\frac{1}{p}+\frac{1}{p^2}}(\phi^{-2}(f))^{p-1}\beta_1+p^{\frac{1}{p}+\frac{2}{p^2}}\gamma_1
	\end{align*}
	by Lemma \ref{refined p-root formula} and a similar argument when $p=2$. Suppose that there exist $\beta_e, \gamma_e\in S^+$ such that 
	\begin{align} \label{align 6.1.4}
		f^{\frac{1}{p}} &= \phi^{-1}(f)+\sum_{i=1}^{e}(-1)^{i-1}p^{\frac{1}{p}+\dots+\frac{1}{p^i}}(\phi^{-i}(f))^{p^{i-1}-1}\phi^{-(i+1)}(\delta(f))\\
		&\quad +p^{\frac{1}{p}+\dots+\frac{1}{p^{e+1}}}(\phi^{-(e+1)}(f))^{p^{e}-1}\beta_e+p^{\frac{1}{p}+\dots+\frac{1}{p^e}+\frac{2}{p^{e+1}}}\gamma_e. \notag
	\end{align}
	By the proof of Lemma \ref{refined p-root formula}, 
	\begin{align} \label{align 6.1.5}
		(f^{\frac{1}{p}}-\phi^{-1}(f))^{p}\equiv p\phi^{-1}(\delta(f))-p(f^{\frac{1}{p}}-\phi^{-1}(f))(\phi^{-1}(f))^{p-1} \pmod{p^{1+\frac{2}{p}}S^+}.
	\end{align}
	Substituting \ref{align 6.1.4} into \ref{align 6.1.5}, we have
	\begin{align*}
		(f^{\frac{1}{p}}-\phi^{-1}(f))^p &\equiv p\phi^{-1}(\delta(f))+\sum_{i=1}^{e}(-1)^ip^{1+\frac{1}{p}+\dots+\frac{1}{p^i}}(\phi^{-i}(f))^{p^i-1}\phi^{-(i+1)}(\delta(f))\\
		&\quad -p^{1+\frac{1}{p}+\dots+\frac{1}{p^{e+1}}}(\phi^{-(e+1)}(f))^{p^{e+1}-1}\beta_e \pmod{p^{1+\frac{1}{p}+\dots+\frac{1}{p^e}+\frac{2}{p^{e+1}}}S^+}.
	\end{align*}
	Therefore, there exist $\beta_{e+1}, \gamma_{e+1}\in S^+$ such that
	\begin{align*}
		f^{\frac{1}{p}} &= \phi^{-1}(f)+\sum_{i=1}^{e+1}(-1)^{i-1}p^{\frac{1}{p}+\dots+\frac{1}{p^{i+1}}}(\phi^{-i}(f))^{p^{i-1}-1}\phi^{-(i+1)}(\delta(f))\\
		& \quad +p^{\frac{1}{p}+\dots+\frac{1}{p^{e+2}}}(\phi^{-(e+2)})^{p^{e+1}-1}\beta_{e+1}+p^{\frac{1}{p}+\dots+\frac{1}{p^{e+1}}+\frac{2}{p^{e+2}}}\gamma_{e+1}.
	\end{align*}
\end{proof}

In the proof, we use the notation from \cite[Section 2]{Hernandez15}.

\begin{notation} \label{notation p-adic expansion}
For $\gamma\in (0,1]\subseteq \R$, let $\gamma^{(i)}$ be the unique integer $\gamma^{(i)}\in \{0,1,\dots, p-1\}$ such that
\[
	\alpha=\sum_{i= 1}^{\infty}\frac{\gamma^{(i)}}{p^i}
\] 
and $\gamma^{(i)}$ is not eventually zero. We adopt the convention that $0^{(e)}=0$ for $e\ge 1$. 
For $e\ge 1$, we use $\langle \gamma \rangle_{e}$ to denote 
\[
\sum_{i=1}^e\frac{\gamma^{(i)}}{p^i}.
\]
\end{notation}

\begin{thm} \label{diagonal hypersurface brt}
	Let $p$ be a prime number, $n, d_1,\dots,d_n\ge 2$ be positive integers, $S:=\Z_p\llbracket x_1,\dots,x_n \rrbracket$ and $f:=x_1^{d_1}+\dots+x_n^{d_n}$. Suppose that there exist non-negative rational numbers $\alpha, \alpha_1,\dots, \alpha_n$ such that $\alpha=\alpha_1+\dots+\alpha_n\le 1$ and $d_i\alpha_i \le 1$ for $i=1,\dots,n$. Moreover, assume that there is at most one carry at each digit when adding the $p$-adic expansions of $\alpha_1, \dots, \alpha_n$ as in Notation \ref{notation p-adic expansion}. Then $\ppt(f)\ge \alpha$.
\end{thm}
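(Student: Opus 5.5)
We aim to show that $(S,f^{t})$ is $+$-regular for every rational $t<\alpha$ of a convenient form, namely $t=\langle\alpha\rangle_e$ minus a small correction. Since $S$ is regular and Gorenstein, purity of $S\xrightarrow{f^t}S^+$ is equivalent to
\[
f^{t}\notin (p,x_1,\dots,x_n)S^+ .
\]
Equivalently, $f^t$ must not be killed in $H^{n+1}_{\m}(S^+)$ against the socle element $[1/(px_1\cdots x_n)]$. Since $\ppt$ is a supremum, it suffices to find exponents $t_e\to\alpha$ with this property. We take $t_e:=\sum_{k=1}^{e}\frac{c_k}{p^k}$ with $c_k:=\sum_i\alpha_i^{(k)}-p\,\epsilon_{k}$. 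Here $\epsilon_k\in\{0,1\}$ is the carry out of digit $k$ when adding the $p$-adic expansions, so each $c_k$ lies in $\{0,\dots,p-1\}$. Thus $t_e$ is essentially $\langle\alpha\rangle_e$, and it tends to $\alpha$ (with a harmless adjustment if $\alpha=1$).

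\textbf{Step 1: expand $f^{t_e}=\prod_k (f^{1/p^k})^{c_k}$.} We use Lemma~\ref{p-th root formula for ppt}, and Theorem~\ref{p^e-root formula} where it is more convenient. The leading term of $f^{1/p^k}$ is $\phi^{-k}(f)=\sum_i x_i^{d_i/p^k}$. When there is no carry at digit $k$, we select from $(\phi^{-k}(f))^{c_k}$ the monomial $\prod_i x_i^{\alpha_i^{(k)}d_i/p^k}$ with multinomial coefficient $c_k!/\prod_i\alpha_i^{(k)}!$. This coefficient is prime to $p$ because $c_k<p$. When there is a carry, the digit sum equals $c_k+p$. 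The needed extra factor $p$ in the exponent comes from the $\delta$-term: the term $p^{1/p^{k-1}}\phi^{-k}(\delta(f))$ in $f^{1/p^{k-1}}$ replaces one factor of $\phi^{-(k-1)}(f)$. The monomials of $\delta(f)\cdot f^{c}$ of the form $\prod x_i^{m_id_i}$ with $\sum m_i=p+c$ and $m_i<p$ have coefficients prime to $p$, by the lemma preceding Proposition~\ref{Fermat type <=p}. This is exactly the carry condition, since the individual digits are at most $p-1$. The cost is a power $p^{1/p^{k-1}}$ per carry. Because there is at most one carry per digit, the total $p$-power is $\sum_{k:\epsilon_k=1}p^{-(k-1)}$, which is less than $1$ after one borrowed unit is absorbed into the digit bookkeeping. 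The resulting monomial is $p^{\lambda}\prod_i x_i^{d_i\langle\alpha_i\rangle_e}$, where $\lambda<1$. Since $d_i\alpha_i\le1$, each exponent $d_i\langle\alpha_i\rangle_e$ is strictly less than $1$.

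\textbf{Step 2: show every other term lands in a strictly larger ideal.} Following the pattern of Propositions~\ref{Fermat type an=p^2-a-1} and~\ref{Fermat type an=p^2-p}, we show that all other terms lie in
\[
(p^{\lambda+\epsilon},\ x_1^{d_1\langle\alpha_1\rangle_e+\eta},\dots,x_n^{d_n\langle\alpha_n\rangle_e+\eta},\ x_1,\dots,x_n)S^+ .
\]
The other terms come in three kinds. Error terms $\beta,\gamma$ carry a higher $p$-power. Other multinomial choices either give a higher $x_i$-exponent or need more $p$-factors. Cross terms between digits are ordered lexicographically in the $p$-adic digits. Once this is done, the argument of the earlier propositions applies: $p,x_1,\dots,x_n$ form a regular sequence on the BCM algebra $\widehat{S^+}$ (via the monomial conjecture / colon-capturing argument), and the leading monomial $p^\lambda\prod x_i^{<1}$ is nonzero modulo that ideal. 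Hence $f^{t_e}\notin(p,x_1,\dots,x_n)S^+$, so $\ppt(f)\ge t_e$ for all $e$, and therefore $\ppt(f)\ge\alpha$.

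\textbf{Main obstacle.} The hard part is Step 2. We must order the terms by the pair (exponent of $p$, exponents of the $x_i$) and verify that no other product of expansion terms across different digits matches the chosen monomial with the same $p$-exponent. It is here that the hypothesis of at most one carry per digit must be used: two carries at one digit would need $\delta^2$ or a $p^{2/p^{k-1}}$ term, and these collide with the error terms $\gamma_{e,e'}$. We would first handle digitwise induction on $e$, multiplying by $(f^{1/p^{e}})^{c_e}$ at each step. The $\beta$-error term, which carries the factor $(\phi^{-(e'+e)}(f))^{p^{e'+e-1}-1}$, is then killed by the $x$-exponent bound $d_i\alpha_i\le1$.
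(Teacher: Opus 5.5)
Your overall plan is the paper's: raise $f$ to a truncation of $\alpha$ and expand factor by factor with the $p$-th root formulae, isolate one term $p^{\lambda}\prod_i x_i^{<1}$ with unit coefficient, and conclude using the BCM property of $\widehat{S^+}$. However, the proposal has a genuine gap exactly at the carries.

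\textbf{The exponent and Step~1.} As written, your exponent is wrong. The digits of $\alpha$ are $\alpha^{(k)}=\sum_i\alpha_i^{(k)}+\epsilon_{k+1}-p\epsilon_k$, and your $c_k$ drops the incoming carry $\epsilon_{k+1}$. Consequently $t_e\to\alpha-\sum_k\epsilon_k p^{1-k}$ rather than $\alpha$; for $p=2$, $\alpha_1=\alpha_2=\tfrac12$ you get $t_e=0$. Also $c_k=-1$ can occur.

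With the intended exponent $\langle\alpha\rangle_e$, Step~1 still fails on carry chains. Suppose a carry starts at digit $t$ (so $\sum_i\alpha_i^{(t)}\ge p$) and passes through digits $s+1,\dots,t-1$, each with $\sum_i\alpha_i^{(k)}=p-1$, to reach the nonzero digit $s$ of $\alpha$, where $t\ge s+2$. This already happens at the fifth binary digit for the weights used for $f_{(3,3,6,7)}$ in Lemma~\ref{lemma classification}.
\begin{itemize}
\item The digits $s+1,\dots,t-1$ of $\alpha$ are $0$, so there is no factor $f^{1/p^{t-1}}$ whose first-order term $p^{1/p^{t-1}}\phi^{-t}(\delta(f))$ your Step~1 calls for.
\item The first-order term $p^{1/p^{s}}\phi^{-(s+1)}(\delta(f))$ of a factor $f^{1/p^{s}}$ puts $p$ units at digit $s+1$, where only $p-1$ are available, and nothing at digit $t$.
\end{itemize}
What is needed is the higher-order part of Lemma~\ref{p-th root formula for ppt}. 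One factor $f^{1/p^{s}}$ must contribute $\pm p^{1/p^{s}+\cdots+1/p^{t-1}}(\phi^{-(t-1)}(f))^{p^{t-s-1}-1}\phi^{-t}(\delta(f))$. Modulo $p$, the middle factor equals $\prod_{k=s+1}^{t-1}(\phi^{-k}(f))^{p-1}$, which supplies exactly the $p-1$ units at each intermediate digit. The $p$-power you predict, $p^{1-k}$ per carry out of digit $k$, is in fact what comes out. But it is produced by one higher-order term per chain, not one first-order term per carry; this is what the paper's $s_m,t_m$ bookkeeping encodes.

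Relatedly, your leading monomial $\prod_i x_i^{d_i\langle\alpha_i\rangle_e}$ has the wrong weighted degree whenever a carry enters digit $e$, since then $\langle\alpha\rangle_e=\sum_i\langle\alpha_i\rangle_e+p^{-e}$. The paper avoids this by truncating only at nonzero digits $s_m$ of $\alpha$. When $t_m=s_{m+1}$, it carries the unexpanded factor $\phi^{-t_m}(\delta(f))$ into the next step (case (2) of its Claim).

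\textbf{Step~2.} This is where the proof actually lives, and you leave it as a list of intentions. The paper's device is to compute modulo $(x_1^{d_1\alpha_1},\dots,x_n^{d_n\alpha_n})S^+$, which contains $(x_1,\dots,x_n)S^+$ because $d_i\alpha_i\le 1$, and to induct over the nonzero digits of $\alpha$. Once the earlier digits match, any choice exceeding some $\alpha_i^{(k)}$ at digit $k$ lands in this ideal. Since the total at each digit is forced, only the prescribed monomial survives, and its coefficient is a unit by the lemma preceding Proposition~\ref{Fermat type <=p}. Your diagnosis of the one-carry hypothesis is right: it keeps the $\delta$-exponent at $1$ and the $p^{\cdots+2/p^{t}}\gamma$-terms out of reach.

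Your claim that the $\beta$-term is killed by $d_i\alpha_i\le 1$ is not accurate.
\begin{itemize}
\item If the next nonzero digit of $\alpha$ lies beyond the chain end $t$, or equals $t$ but receives no carry, the $\beta$-term is beaten by its extra factor $p^{1/p^{t}}$.
\item If $t$ is itself the next nonzero digit and receives a carry, the $\beta$-term's $p$-power is no larger than that of the next main term. It dies only because its factor $(\phi^{-t}(f))^{p-1}$ combines with the next digit's $(\phi^{-t}(f))^{\alpha^{(t)}}$ into $(\phi^{-t}(f))^{\sigma}$ with $\sigma=\sum_i\alpha_i^{(t)}\ge p$. The only monomial of this that survives modulo $(x_i^{d_i\alpha_i})$ has coefficient $\sigma!/\prod_i\alpha_i^{(t)}!\equiv 0 \pmod p$.
\end{itemize}
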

\begin{proof}
We may assume that $\alpha>0$. Moreover, since we can omit the variable $x_i$ whenever $\alpha_i=0$, we may assume that $\alpha_i>0$ for all $i$.
For $i \ge 1$, let $s_i$ be the $i$-th member of the set of positive integers $e$ such that $\alpha^{(e)}>0$. We define $t_i$ as follows:
\begin{enumerate}
	\item If $\alpha_{1}^{(s_i)}+\dots+\alpha_{n}^{(s_i)} \equiv \alpha^{(s_i)} \pmod{p}$, then $t_i:=s_i$.
	\item If $\alpha_{1}^{(s_i)}+\dots+\alpha_{n}^{(s_i)} \not\equiv \alpha^{(s_i)} \pmod{p}$, then
\[
	t_i:=\min\{e \in \N\mid \text{$e> s_i$ and $\alpha_{1}^{(e)}+\dots + \alpha_{n}^{(e)}\ge p$} \}.
\]
\end{enumerate}
Note that in case (2), since a carry must occur, $t_i$ is well defined and $t_i\le s_{i+1}$. By Lemma \ref{p-th root formula for ppt}, for any $m\ge 1$, there exist $\beta_m, \gamma_m\in S^+$ such that
\begin{align*}
	f^{\frac{1}{p^{s_m}}}= &\phi^{-s_m}(f)+\sum_{i=1}^{t_m-s_m} (-1)^{i-1}p^{\frac{1}{p^{s_m}}+\dots+\frac{1}{p^{s_m+i-1}}} (\phi^{-(i+s_m-1)}(f))^{p^{i-1}-1}\phi^{-(i+s_m)}(\delta(f))\\
&+p^{\frac{1}{p^{s_m}}+\dots+\frac{1}{p^{t_m}}}(\phi^{-t_m}(f))^{p^{t_m-s_m}-1}\beta_{m}+p^{\frac{1}{p^{s_m}}+\dots+ \frac{1}{p^{t_m-1}}+\frac{2}{p^{t_m}}}\gamma_m.
\end{align*}

\begin{cl}
For any $m\ge 1$, there exists an integer $C_m$ coprime to $p$ such that
\begin{enumerate}
	\item If $t_m<s_{m+1}$, then
	\begin{align*}
		f^{\langle \alpha \rangle_{s_m}} &= f^{\frac{\alpha^{(s_1)}}{p^{s_1}}}\cdots f^{\frac{\alpha^{(s_m)}}{p^{s_m}}} \\
& \equiv
C_m p^{\sum_{i=1}^{m}\frac{1}{p^{s_i}}+\dots+\frac{1}{p^{t_i-1}}}x_1^{d_1\langle \alpha_1\rangle_{t_m}}\cdots x_n^{d_n\langle \alpha_n\rangle_{t_m}} \\
	&\pmod{(p^{\sum_{i=1}^{m}(\frac{1}{p^{s_i}}+\dots+\frac{1}{p^{t_i-1}})+\frac{1}{p^{t_m}}},x_1^{d_1\alpha_1},\dots, x_n^{d_n\alpha_n})S^+}.
	\end{align*}
	\item If $t_m=s_{m+1}$, then
	\begin{align*}
		f^{\langle \alpha \rangle_{s_m}} &= f^{\frac{\alpha^{(s_1)}}{p^{s_1}}}\cdots f^{\frac{\alpha^{(s_m)}}{p^{s_m}}}\\
& \equiv
C_m p^{\sum_{i=1}^{m}\frac{1}{p^{s_i}}+\dots+\frac{1}{p^{t_i-1}}}x_1^{d_1\langle \alpha_1\rangle_{t_m-1}}\cdots x_n^{d_n\langle \alpha_n \rangle_{t_m-1}}\phi^{-t_m}(\delta(f)) \\
&\pmod{(p^{\sum_{i=1}^{m}(\frac{1}{p^{s_i}}+\dots+\frac{1}{p^{t_i-1}})+\frac{1}{p^{t_m}}} x_1^{d_1\langle \alpha_1 \rangle_{t_m-1}}\cdots x_n^{d_n\langle \alpha_n\rangle_{t_m-1}}(\phi^{-t_{m}}(f))^{p-1},\\
&\qquad  p^{\sum_{i=1}^{m}(\frac{1}{p^{s_i}}+\dots+\frac{1}{p^{t_i-1}})+\frac{2}{p^{t_{m}}}}, x_1^{d_1\alpha_1},\dots, x_n^{d_n\alpha_n})S^+}.
\end{align*}
\end{enumerate}
\end{cl}
\begin{clproof}
	We show the claim by induction on $m$.
	First, suppose that $m=1$. 
If $t_1=s_1$, then $\alpha^{(s_1)}=\alpha_1^{(s_1)}+\cdots + \alpha_n^{(s_1)}$. Hence,
	\[
		f^{\langle \alpha \rangle_{s_1}}=f^{\frac{\alpha^{(s_1)}}{p^{s_1}}}\equiv \frac{\alpha^{(s_1)}!}{\alpha_1^{(s_1)}!\dots \alpha_n^{(s_1)}!}x_1^{\frac{d_1 \alpha_{1}^{(s_1)}}{p^{s_1}}}\cdots x_n^{\frac{d_n \alpha_{n}^{(s_1)}}{p^{s_1}}} \pmod{(p^{\frac{1}{p^{s_1}}},x_1^{d_1\alpha_1},\dots, x_n^{d_n\alpha_n})S^+}.
	\]
	If $s_1<t_1<s_2$, then $\langle \alpha \rangle_{s_1}=\langle \alpha_1 \rangle_{t_1}+\cdots +\langle \alpha_n \rangle_{t_1}$. Hence, there exists an integer $C$ coprime to $p$ such that 
	\begin{align*}
		&f^{\langle \alpha \rangle_{s_1}} \\
&\equiv \alpha^{(s_1)}(-1)^{t_1-s_1-1}p^{\frac{1}{p^{s^1}}+\dots+\frac{1}{p^{t_1-1}}}\\
& \quad \cdot (\phi^{-s_1}(f))^{\alpha^{(s_1)}-1}(\phi^{-(s_1+1)}(f))^{p-1}\dots (\phi^{-(t_1-1)}(f))^{p-1}\phi^{-t_1}(\delta(f)) \\
&\equiv Cp^{\frac{1}{p^{s^1}}+\dots+\frac{1}{p^{t_1-1}}}x_1^{d_1 \langle \alpha \rangle_{t_1}}\cdots x_n^{d_n\langle \alpha_n \rangle_{t_1}} \pmod{(p^{\frac{1}{p^{s^1}}+\dots+\frac{1}{p^{t_1}}}, x_1^{d_1\alpha_1},\dots, x_n^{d_n\alpha_n})S^+}.
\end{align*}
If $t_1=s_2$, then $\langle \alpha \rangle_{s_1}=\langle \alpha_1\rangle_{t_1-1}+\cdots+\langle \alpha_n \rangle_{t_1-1}+p/p^{t_1}$.
Hence, there exists an integer $C$ coprime to $p$ such that 
	\begin{align*}
		&f^{\langle \alpha \rangle_{s_1}} \\
&\equiv \alpha^{(s_1)}(-1)^{t_1-s_1-1}p^{\frac{1}{p^{s^1}}+\dots+\frac{1}{p^{t_1-1}}}\\
& \quad \cdot (\phi^{-s_1}(f))^{\alpha^{(s_1)}-1}(\phi^{-(s_1+1)}(f))^{p-1}\dots (\phi^{-(t_1-1)}(f))^{p-1}\phi^{-t_1}(\delta(f)) \\
&\equiv Cp^{\frac{1}{p^{s^1}}+\dots+\frac{1}{p^{t_1-1}}}x_1^{d_1 \langle \alpha_1\rangle_{t_1-1}}\cdots x_n^{d_n\langle \alpha_n \rangle_{t_1-1}}\phi^{-t_1}(\delta(f)) \\
&\pmod{(p^{\frac{1}{p^{s^1}}+\dots+\frac{1}{p^{t_1}}}x_1^{d_1\langle \alpha_1\rangle_{t_1-1}}\cdots x_n^{d_n\langle \alpha_n \rangle_{t_1-1}}(\phi^{-t_{1}}(f))^{p-1}, p^{\frac{1}{p^{s^1}}+\dots+\frac{1}{p^{t_1-1}}+\frac{2}{p^{t_1}}}, \\
& \qquad \qquad x_1^{d_1\alpha_1},\dots, x_n^{d_n\alpha_n})S^+},
\end{align*}
which shows the case where $m=1$. Next, suppose that the claim holds for $m$. If $t_m<s_{m+1}$ and $t_{m+1}<s_{m+2}$, then there exists an integer $C$ coprime to $p$ such that
\begin{align*}
	f^{\frac{\alpha^{(s_{m+1})}}{p^{s_{m+1}}}} & \equiv Cp^{\frac{1}{p^{s_{m+1}}}+\dots+\frac{1}{p^{t_{m+1}-1}}}x_1^{d_1\left(\frac{\alpha_1^{(s_{m+1})}}{p^{s_{m+1}}}+\dots +\frac{\alpha_1^{(t_{m+1})}}{p^{t_{m+1}}}\right)}\cdots x_n^{d_n\left(\frac{\alpha_n^{(s_{m+1})}}{p^{s_{m+1}}}+\dots +\frac{\alpha_{n}^{(t_{m+1})}}{p^{t_{m+1}}}\right)} \\
&\pmod{(p^{\frac{1}{p^{s_{m+1}}}+\dots+\frac{1}{p^{t_{m+1}}}},x_1^{d_1\alpha_1-d_1\langle \alpha_1 \rangle_{t_m}}, \dots, x_n^{d_n\alpha_n-d_n\langle \alpha_n \rangle_{t_m}})S^+}.
\end{align*}
Hence, there exists an integer $C'$ coprime to $p$ such that
\begin{align*}
	f^{\langle \alpha \rangle_{s_{m+1}}} &= f^{\frac{\alpha^{(s_1)}}{p^{s_1}}}\cdots f^{\frac{\alpha^{(s_{m+1})}}{p^{s_{m+1}}}} \\
& \equiv C' p^{\sum_{i=1}^{m+1}\frac{1}{p^{s_i}}+\dots+\frac{1}{p^{t_i-1}}}x_1^{d_1\langle \alpha_1 \rangle_{t_{m+1}}}\cdots x_{n}^{d_n\langle \alpha_n\rangle_{t_{m+1}}} \\
&\pmod{(p^{\sum_{i=1}^{m+1}(\frac{1}{p^{s_i}}+\dots+\frac{1}{p^{t_i-1}})+\frac{1}{p^{t_{m+1}}}},x_1^{d_1\alpha_1},\dots, x_n^{d_n\alpha_n})S^+}.
\end{align*}
If $t_m<s_{m+1}$ and $t_{m+1}=s_{m+2}$, then there exists an integer coprime to $p$ such that
\begin{align*}
	&f^{\frac{\alpha^{(s_{m+1})}}{p^{s_{m+1}}}} \\
    &\equiv Cp^{\frac{1}{p^{s_{m+1}}}+\dots+\frac{1}{p^{t_{m+1}-1}}}\\
    & \quad \cdot x_1^{d_1\left(\frac{\alpha_1^{(s_{m+1})}}{p^{s_{m+1}}}+\dots +\frac{\alpha_1^{(t_{m+1}-1)}}{p^{t_{m+1}-1}}\right)}\cdots x_n^{d_n\left(\frac{\alpha_n^{(s_{m+1})}}{p^{s_{m+1}}}+\dots +\frac{\alpha_{n}^{(t_{m+1}-1)}}{p^{t_{m+1}-1}}\right)}\phi^{-t_{m+1}}(\delta(f)) \\
&\pmod{(p^{\frac{1}{p^{s_{m+1}}}+\dots+\frac{1}{p^{t_{m+1}}}} x_1^{d_1\left(\frac{\alpha_1^{(s_{m+1})}}{p^{s_{m+1}}}+\dots +\frac{\alpha_1^{(t_{m+1}-1)}}{p^{t_{m+1}-1}}\right)}\\
& \qquad \cdots x_n^{d_n\left(\frac{\alpha_n^{(s_{m+1})}}{p^{s_{m+1}}}+\dots +\frac{\alpha_{n}^{(t_{m+1}-1)}}{p^{t_{m+1}-1}}\right)}(\phi^{-{t_{m+1}}}(f))^{p-1}, p^{\frac{1}{p^{s_{m+1}}}+\dots+\frac{1}{p^{t_{m+1}-1}}+\frac{2}{p^{t_{m+1}}}}, \\
& \qquad \qquad x_1^{d_1\alpha_1-d_1\langle \alpha_1 \rangle_{t_m}}, \dots, x_n^{d_n\alpha_n-d_n\langle \alpha_n \rangle_{t_m}})S^+},
\end{align*}
whence the claim for $m+1$ follows if $t_m<s_{m+1}$. Next, suppose that $t_{m}=s_{m+1}$.
Since $f^{\frac{\alpha^{(s_{m+1})}}{p^{s_{m+1}}}}\equiv (\phi^{-s_{m+1}}(f))^{\alpha^{(s_{m+1})}} \pmod{p^{\frac{1}{p^{s_{m+1}}}}S^+}$, we have
\begin{align*}
	&f^{\frac{\alpha^{(s_{m+1})}}{p^{s_{m+1}}}}(p^{\sum_{i=1}^{m}(\frac{1}{p^{s_i}}+\dots+\frac{1}{p^{t_i-1}})+\frac{1}{p^{t_m}}} x_1^{d_1\langle \alpha_1 \rangle_{t_m-1}}\cdots x_n^{d_n\langle \alpha_n\rangle_{t_m-1}}(\phi^{-t_{m}}(f))^{p-1},\\
	&\qquad  p^{\sum_{i=1}^{m}(\frac{1}{p^{s_i}}+\dots+\frac{1}{p^{t_i-1}})+\frac{2}{p^{t_{m}}}}, x_1^{d_1\alpha_1},\dots, x_n^{d_n\alpha_n})S^+\\
	& \subseteq (p^{\sum_{i=1}^{m+1}(\frac{1}{p^{s_i}}+\dots+\frac{1}{p^{t_i-1}})+\frac{2}{p^{t_{m+1}}}},x_1^{d_1\alpha_1},\dots, x_n^{d_n\alpha_n})S^+.
\end{align*}
Note that we have
\begin{align*}
	\alpha^{(s_{m+1})}=
	\begin{cases}
		\alpha_1^{(s_{m+1})}+\dots +\alpha_m^{(s_{m+1})}-p & \text{if $s_{m+1}=t_{m+1}$,}\\
		\alpha_1^{(s_{m+1})}+\dots +\alpha_m^{(s_{m+1})}-p+1 & \text{if $s_{m+1}<t_{m+1}$.}
	\end{cases}
\end{align*}
If $t_{m+1}<s_{m+2}$, then there exists an integer $C$ coprime to $p$ such that
\begin{align*}
	& \phi^{-t_m}(\delta(f))f^{\frac{\alpha^{(s_{m+1})}}{p^{s_{m+1}}}} \\
	& \equiv Cp^{\frac{1}{p^{s_{m+1}}}+\dots+\frac{1}{p^{t_{m+1}-1}}}x_1^{d_1\left(\frac{\alpha_1^{(s_{m+1})}}{p^{s_{m+1}}}+\dots +\frac{\alpha_{1}^{(t_{m+1})}}{p^{t_{m+1}}}\right)}\cdots x_{n}^{d_n\left(\frac{\alpha_n^{(s_{m+1})}}{p^{s_{m+1}}}+\dots +\frac{\alpha_{n}^{(t_{m+1})}}{p^{t_{m+1}}}\right)}\\
	& \pmod{(p^{\frac{1}{p^{s_{m+1}}}+\dots+\frac{1}{p^{t_{m+1}}}},x_1^{d_1\alpha_1-d_1\langle \alpha_1 \rangle_{t_m-1}}, \dots, x_n^{d_n\alpha_n-d_n\langle \alpha_n \rangle_{t_m-1}})S^+}.
\end{align*}
If $t_{m+1}=s_{m+2}$, there exists an integer $C$ coprime to $p$ such that
\begin{align*}
	& \phi^{-t_m}(\delta(f))f^{\frac{\alpha^{(s_{m+1})}}{p^{s_{m+1}}}} \\
	& \equiv Cp^{\frac{1}{p^{s_{m+1}}}+\dots+\frac{1}{p^{t_{m+1}-1}}}\\
    & \quad \cdot x_1^{d_1\left(\frac{\alpha_1^{(s_{m+1})}}{p^{s_{m+1}}}+\dots +\frac{\alpha_{1}^{(t_{m+1}-1)}}{p^{t_{m+1}-1}}\right)}\cdots x_{n}^{d_n\left(\frac{\alpha_n^{(s_{m+1})}}{p^{s_{m+1}}}+\dots +\frac{\alpha_{n}^{(t_{m+1}-1)}}{p^{t_{m+1}-1}}\right)}\phi^{-t_{m+1}}(\delta(f))\\
	& \pmod{(p^{\frac{1}{p^{s_{m+1}}}+\dots+\frac{1}{p^{t_{m+1}}}}\\
    &\qquad \qquad \cdot x_1^{d_1\left(\frac{\alpha_1^{(s_{m+1})}}{p^{s_{m+1}}}+\dots +\frac{\alpha_{1}^{(t_{m+1}-1)}}{p^{t_{m+1}-1}}\right)}\cdots x_{n}^{d_n\left(\frac{\alpha_n^{(s_{m+1})}}{p^{s_{m+1}}}+\dots +\frac{\alpha_{n}^{(t_{m+1}-1)}}{p^{t_{m+1}-1}}\right)}(\phi^{-t_{m+1}}(f))^{p-1},\\
		&\qquad \qquad p^{\frac{1}{p^{s_{m+1}}}+\dots+\frac{1}{p^{t_{m+1}-1}}+\frac{2}{p^{t_{m+1}}}}, x_1^{d_1\alpha_1-d_1\langle \alpha_1 \rangle_{t_m-1}}, \dots, x_n^{d_n\alpha_n-d_n\langle \alpha_n \rangle_{t_m-1}})S^+}.
\end{align*}

Hence, in either case, the claim holds for $m+1$.
\end{clproof}

By the above claim,
\[
f^{\langle \alpha \rangle_{s_m}} \notin (p,x_1,\dots,x_n)S^+
\]
for any $m\ge 1$. Since $\lim_{m\to \infty}\langle \alpha \rangle_{s_m}=\alpha$, we obtain
$\ppt(f)\ge \alpha$.

\end{proof}

\begin{eg}
	Let $S:=\Z_p\llbracket x,y,z \rrbracket$ and $f=x^2+y^3+z^6$. Then $\ppt(f)=1$. Indeed, let $c_i$ be the carry at the $i$-th fractional digit when adding the $p$-adic expansions of $1/2$, $1/3$ and $1/6$. By Theorem \ref{diagonal hypersurface brt}, it is enough to show that $c_i\le 1$ for any $i\ge 2$.
First, suppose that $p=2$. Then
\begin{align*}
	\frac{1}{2} &= 0.011111\dots_{(2)}, \\
	\frac{1}{3} &= 0.010101\dots_{(2)}, \\
	\frac{1}{6} &= 0.001010\dots_{(2)},
\end{align*}
where $(-)_{(p)}$ denotes the $p$-adic expansion. Hence, $c_i=1$ for any $i\ge 2$.

Next, suppose that $p=3$. Then
\begin{align*}
	\frac{1}{2} = 0.111\dots_{(3)}, \quad \frac{1}{3} = 0.022\dots_{(3)}, \quad \frac{1}{6} &= 0.011\dots_{(3)}.
\end{align*}
Hence, $c_i=1$ for any $i \ge 2$.

Thirdly, suppose that $p\equiv 1 \pmod{6}$. Let $n:=(p-1)/6$. Then
\begin{align*}
	\frac{1}{2} = \sum_{i=1}^\infty \frac{3n}{p^i},\quad \frac{1}{3} = \sum_{i=1}^\infty \frac{2n}{p^i}, \quad \frac{1}{6} = \sum_{i=1}^\infty \frac{n}{p^i}.
\end{align*}
Hence, $c_i=0$ for any $i\ge 2$.

Lastly, suppose that $p \equiv 5 \pmod{6}$. Let $n:=(p^2-1)/6$. Then
\begin{align*}
	\frac{1}{2}= \sum_{i=1}^{\infty}\frac{3n}{p^{2i}} \quad \frac{1}{3}=\sum_{i=1}^{\infty}\frac{2n}{p^{2i}}, \quad \frac{1}{6}=\sum_{i=1}^{\infty}\frac{n}{p^{2i}}.
\end{align*}
We see that
\[
	n \equiv \frac{5p-1}{6}, \quad 2n \equiv \frac{2p-1}{3}, \quad 3n \equiv \frac{p-1}{2} \pmod{p}.
\]
Since $6n<p^2$, $c_i=0$ if $i$ is odd.
Since
\[
	\frac{5p-1}{6}+\frac{2p-1}{3}+\frac{p-1}{2}=2p-1,
\]
$c_i=1$ if $i$ is even.
\end{eg}

The following corollary gives an affirmative answer to \cite[Question 4.4]{CPQGST25}.

\begin{cor} \label{binomial ppt}
Let $p$ be a prime number, $a, b\ge 2$ be positive integers, $S:=\Z_p\llbracket x,y \rrbracket$ and $f:=x^a+y^b$.
Then
\[
	\ppt(f)=\frac{1}{a}+\frac{1}{b}.
\]
\end{cor}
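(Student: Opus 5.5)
The lower bound $\ppt(f)\ge 1/a+1/b$ will come from Theorem \ref{diagonal hypersurface brt}, and the upper bound from comparing with the log canonical threshold in characteristic zero, following the route of Lemma \ref{+-regular and log terminal}.

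\emph{Lower bound.} Apply Theorem \ref{diagonal hypersurface brt} with $n=2$, $d_1=a$, $d_2=b$, $\alpha_1=1/a$, $\alpha_2=1/b$ and $\alpha=1/a+1/b$. Since $a,b\ge 2$, we have $\alpha\le 1$ and $d_i\alpha_i=1$. The carry hypothesis holds automatically for two summands. At each digit we add two $p$-adic digits, each at most $p-1$, together with an incoming carry $c\in\{0,1\}$. The total is then at most $2p-1<2p$, so the outgoing carry is again at most $1$. By induction from the far digits, with the ``not eventually zero'' convention of Notation \ref{notation p-adic expansion}, every carry is at most one. Theorem \ref{diagonal hypersurface brt} then gives $\ppt(f)\ge 1/a+1/b$.

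\emph{Upper bound.} If $a=b=2$, then $1/a+1/b=1$, and $\ppt(f)\le 1$ holds trivially: $f\in(p,x,y)S$, so $S\xrightarrow{f}S^+$ is not pure. In general, suppose $t\in\Q$ is such that $(S,f^t)$ is $+$-regular. I would argue as in Lemma \ref{+-regular and log terminal}.
\begin{enumerate}
\item Purity of $S\to S^+$ twisted by $f^t$ descends along the completion to the localization of $\Z_p[x,y]$ at $(p,x,y)$.
\item By \cite[Corollary 7.18]{BMPSTWW25}, which relates $+$-regularity of pairs to multiplier ideals after inverting $p$, the multiplier ideal $\mathcal{J}(S[1/p],f^{t})$ is trivial.
\item Localizing at the origin of the generic fiber and base changing to $\C$, as in the proof of Lemma \ref{+-regular and log terminal}, gives $\mathcal{J}(\C[x,y]_{(x,y)},f^{t})=\C[x,y]_{(x,y)}$.
\item Hence $t<\operatorname{lct}_0(x^a+y^b)=\min\{1,1/a+1/b\}=1/a+1/b$.
\end{enumerate}
Taking the supremum over such $t$ gives $\ppt(f)\le 1/a+1/b$.

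\emph{Main obstacle.} The delicate step is the upper bound. One must check that \cite[Corollary 7.18]{BMPSTWW25} applies to pairs $(S,f^t)$ and not only to $R$ itself, and that triviality of the multiplier ideal survives the passage from $\Q_p$ to $\C$. The latter is harmless, since multiplier ideals of polynomials with integer coefficients commute with field extension in characteristic zero. If the pair version is not directly available, I would instead use a direct valuation argument. Take the monomial valuation on $S^+$ with $v(x)=1/a$ and $v(y)=1/b$, extended so that $v(p)\ge 1/a+1/b$. For $t>1/a+1/b$, one checks $v(f^t)\ge t>1/a+1/b=v(xy)$. This should force $f^t\in(p,x,y)\widehat{S^+}$, via the standard almost-mathematics/$p$-adic completion argument used in \cite{CPQGST25} for upper bounds of plus-pure thresholds.
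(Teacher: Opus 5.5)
Your lower bound is the paper's argument: Theorem \ref{diagonal hypersurface brt} with $\alpha_1=1/a$, $\alpha_2=1/b$, where the carry condition is automatic because two digits sum to at most $2p-2$.

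For the upper bound you share the paper's strategy: force klt-ness of the pair on the characteristic-zero generic fiber, then compare with $\operatorname{lct}(x^a+y^b)=1/a+1/b$. The chain of results is different, though.
\begin{itemize}
\item You first descend purity of $R\xrightarrow{f^t}R^+$ from the completion to $R=(\Z_p[x,y])_{(p,x,y)}$, by the elementary argument of Lemma \ref{+-regular and log terminal}. You then need a pair version of \cite[Corollary 7.18]{BMPSTWW25}, applied to $R[1/p]$ (not $S[1/p]$ as you wrote).
\item The paper stays on the complete ring, where \cite[Theorem 6.21]{Ma-Schwede21} already handles pairs. If $\ppt(f)>\alpha$, then $(S,f^{\alpha})$ is klt in the sense of \cite[Definition 2.6]{Ma-Schwede21}. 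Next, \cite[Lemma 2.6]{Sato-Takagi25} descends this to $(\Z_p[x,y])_{(p,x,y)}$. Localizing to $(\Q_p[x,y])_{(x,y)}$ then contradicts $\operatorname{lct}=\alpha$ directly over $\Q_p$, with no base change to $\C$.
\end{itemize}
So the paper's route removes exactly the point you flag as the main obstacle, at the cost of a klt-descent lemma. Your route makes the descent step elementary, at the cost of needing the pair statement from \cite{BMPSTWW25}.

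Do not fall back on your valuation argument, however: it is wrong as stated. The implication ``$v(g)\ge v(xy)$ and $v(p)\ge v(xy)$ $\Rightarrow$ $g\in(p,x,y)$'' does hold on $S_\infty$, where $v$ is computed termwise on monomials. It fails in $S^+$. For example, if $v(p)\ge 2v(xy)$, then $g=p^{1/2}$ satisfies $v(g)\ge v(xy)$. Yet under a map $S^+\to\Z_p^+$ extending $x,y\mapsto 0$, it goes to a square root of $p$, which does not lie in $p\Z_p^+$. This matters because $f^t$ genuinely involves fractional powers of $p$, as the $p$-th root formulae of Section 3 show. The valuative obstruction that is actually true is the klt one. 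There the threshold is the log discrepancy $v(p)+v(x)+v(y)$, not $v(xy)$. It equals $1/a+1/b$ only when $v(p)=0$, that is, for a valuation centered on the generic fiber, and that is precisely the characteristic-zero comparison above.
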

\begin{proof}
	Let $\alpha=1/a+1/b$, $\alpha_1=1/a$ and $\alpha_2=1/b$. 

First, we show that $\ppt(f)\ge \alpha.$ By Theorem \ref{diagonal hypersurface brt}, it is enough to show that there is at most one carry at each digit when adding the $p$-adic expansions of $\alpha_1$ and $\alpha_2$. This follows from the observation that $\alpha_1^{(e)}+\alpha_2^{(e)}\le 2p-2$ for any $e\ge 1$.

Next, we show that $\operatorname{ppt}(f)\le \alpha$.
Suppose that $\operatorname{ppt}(f)>\alpha$. The pair $(\Z_p\llbracket x,y\rrbracket,f^{\alpha})$ has klt singularities in the sense of \cite[Definition 2.6]{Ma-Schwede21} by \cite[Theorem 6.21]{Ma-Schwede21} since it is $+$-regular. Then the pair $((\Z_p[x,y])_{(p,x,y)},f^{\alpha})$ has klt singularities by \cite[Lemma 2.6]{Sato-Takagi25}. Since $(\Q_p[x,y])_{(x,y)}$ is a localization of $(\Z_p[x,y])_{(p,x,y)}$, the pair $((\Q_p[x,y])_{(x,y)},f^{\alpha})$ has klt singularities. Since $\operatorname{lct}((\Q_p[x,y])_{(x,y)},f)=\alpha$, this is a contradiction.
\end{proof}

As an application, we can give a stricter lower bound than the one given in \cite[Theorem C]{BJPRMSS25}.
\begin{eg}
	Let $S:=\Z_2\llbracket x, y \rrbracket$ and $f:=x^3+y^3+2^3$. Then $\ppt(f) \ge 2/3$. Indeed, since $f\equiv x^3+y^3 \pmod{2^2S}$, $\delta(f)\equiv \delta(x^3+y^3) \pmod{2S}$. Hence, we can perform the same calculation as Theorem \ref{diagonal hypersurface brt} and Corollary \ref{binomial ppt}.
\end{eg}

\subsection{Plus-pure thresholds and splitting-order sequences in mixed characteristic \texorpdfstring{$(0,2)$}{(0,2)}}
In this subsection, we study the relationship between our computation and splitting-order sequences, introduced by Yoshikawa \cite{Yoshikawa25b}, in mixed characteristic (0,2).
\begin{setting} \label{setting ppt p=2}
	Let $n$ be a positive integer, $S:=\Z_2\llbracket x_1,\dots, x_n \rrbracket$, $\n$ be the maximal ideal of $S$ and $f\in \n\setminus \{0\}$. We use $\overline{S}$ and $\overline{\n}$ to denote $S/pS$ and $\n\overline{S}$, respectively.
\end{setting}
First, we derive bounds on the BCM-regular and plus-pure thresholds using the p-th root formula.
\begin{defn}
	Let the notation be as in Setting \ref{setting ppt p=2}. For any $e\ge 1$, $s=(s_1,\dots,s_e)\in \{0,1\}^{e}$, we define 
\[
	\tau_{s}:=\prod_{i=1}^{e}\left((\phi^{-i}(f))^{1-s_i}(\phi^{-(i+1)}(\delta(f))\right)^{s_i})
\]
and
$2^s:=2^{\frac{s_1}{2}+\dots+\frac{s_e}{2^e}}$.
 For any $e\ge 1$, we define
	\[
		\mathcal{S}_e:=\{s\in \{0,1\}^{e}\mid \tau_s\notin \n S^+\}.
	\]
	We regard $\mathcal{S}_e$ as a totally ordered set by the lexicographic order.
\end{defn}
\begin{rem}
	The condition $\tau_s\notin \n S^+$ is equivalent to
	\[
		\prod_{i=1}^e\left( f^{(1-s_i) 2^{e+1-i}}(\delta(f))^{s_i2^{e-i}}\right)\notin \overline{\n}^{[2^{e+1}]}.
	\]
\end{rem}

\begin{prop} \label{prop set of sequences}
	With notation as in Setting \ref{setting ppt p=2}, let $1\le e_1\le e_2$.
The following statements hold:
	\begin{enumerate}
		\item If $(s_1,\dots, s_{e_2})\in \mathcal{S}_{e_2}$, then $(s_1,\dots,s_{e_1})\in \mathcal{S}_{e_1}$.
\item Suppose that $\mathcal{S}_{e_2}\neq \emptyset$. Let $(s_1,\dots,s_{e_2})=\min \mathcal{S}_{e_2}$. Then $(s_1,\dots,s_{e_1})=\min \mathcal{S}_{e_1}$.
	\end{enumerate}
\end{prop}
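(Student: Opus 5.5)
Part (1) is a direct consequence of the definition of $\tau_s$. For $s=(s_1,\dots,s_{e_2})$ we have
\[
	\tau_{(s_1,\dots,s_{e_2})}=\tau_{(s_1,\dots,s_{e_1})}\cdot \prod_{i=e_1+1}^{e_2}(\phi^{-i}(f))^{1-s_i}(\phi^{-(i+1)}(\delta(f)))^{s_i}.
\]
Each factor in the product lies in $S^+$. Since $\n S^+$ is an ideal, $\tau_{(s_1,\dots,s_{e_1})}\in \n S^+$ would force $\tau_{(s_1,\dots,s_{e_2})}\in \n S^+$. Taking the contrapositive gives (1). No other input is needed beyond $\phi^{-i}(f),\phi^{-(i+1)}(\delta(f))\in S^+$, which holds because they lie in $S_\infty\subseteq S^+$.

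For part (2), write $s=(s_1,\dots,s_{e_2})=\min \mathcal{S}_{e_2}$ and $u:=(s_1,\dots,s_{e_1})$. By (1), $u\in \mathcal{S}_{e_1}$, so $\mathcal{S}_{e_1}\neq\emptyset$ and $\min\mathcal{S}_{e_1}\le u$. Suppose for contradiction that $t=(t_1,\dots,t_{e_1})\in \mathcal{S}_{e_1}$ with $t<u$ lexicographically. The goal is to extend $t$ to some $t'\in \mathcal{S}_{e_2}$. Since $t'$ then agrees with $t$ in the first $e_1$ entries, we get $t'<s$ lexicographically, contradicting minimality of $s$.

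The natural extension is by zeros, $t'=(t_1,\dots,t_{e_1},0,\dots,0)$. Then
\[
	\tau_{t'}=\tau_t\cdot\phi^{-(e_1+1)}(f)\cdots\phi^{-e_2}(f),
\]
and we must show this is not in $\n S^+$. This is the main obstacle: multiplying by elements of $\n S^+$ (note $f\in\n$) could in principle push us into $\n S^+$. I would work in the equivalent characteristic-$2$ formulation of the Remark, where nonmembership in $\n S^+$ becomes nonmembership in a Frobenius power of $\overline{\n}$ in the regular ring $\overline{S}$. There, the check is that $g\notin\overline{\n}^{[2^{e+1}]}$ should imply
\[
	g^2 f^{2}\notin\overline{\n}^{[2^{e+2}]} \quad\text{(for appending $0$)}, \qquad g^2\delta(f)\notin\overline{\n}^{[2^{e+2}]} \quad\text{(for appending $1$)},
\]
at least for one of the two choices. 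Flatness of Frobenius over the regular ring $\overline S$ gives $g^2\notin \overline{\n}^{[2^{e+2}]}$, but multiplying by $f^2$ may destroy this, so appending zeros need not work.

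I would therefore replace the zero extension by a more flexible one. Starting from $t\in\mathcal{S}_{e_1}$, extend one step at a time, at each step choosing whichever of $0,1$ keeps the sequence in $\mathcal{S}$. Any such extension of $t$ is lexicographically smaller than $s$, so it suffices to show that every element of $\mathcal{S}_e$ has at least one extension in $\mathcal{S}_{e+1}$. In the formulation of the Remark this reads: if $g\notin\overline{\n}^{[2^{e+1}]}$, then
\[
	g^2f^2\notin \overline{\n}^{[2^{e+2}]} \quad\text{or}\quad g^2\delta(f)\notin\overline{\n}^{[2^{e+2}]}.
\]
I would prove this by contradiction. Assume both memberships hold, and combine them using $f^2=\phi(f)-2\delta(f)$ together with a $2$-th root argument (analogous to the $p$-th root formula, Lemma \ref{p-root formula}, applied to $\tau_t\cdot\phi^{-(e+1)}(f)$). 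The aim is to derive $\tau_t\in(2^{1/2^{e+1}},\n)S^+$. Reducing modulo $2$ and using the BCM property of $\widehat{S^+}^2$ would then contradict $\tau_t\notin\n S^+$.
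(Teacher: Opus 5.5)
Your argument for (1) is correct; it is what the paper means by ``clear by definition''.

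Part (2) has a genuine gap. You reduce it to the claim that every element of $\mathcal{S}_e$ has an extension in $\mathcal{S}_{e+1}$, and that claim is false. Take the paper's own example $f=x^2+y^3\in\Z_2\llbracket x,y\rrbracket$, so $\delta(f)=-x^2y^3$.
\begin{itemize}
\item $\tau_{(0)}=\phi^{-1}(f)\in\n S^+$ and $\tau_{(1)}=-x^{1/2}y^{3/4}\notin\n S^+$, so $\mathcal{S}_1=\{(1)\}$.
\item $\tau_{(1,0)}=-(xy^{3/4}+x^{1/2}y^{3/2})$ and $\tau_{(1,1)}=x^{3/4}y^{9/8}$ both lie in $\n S^+$, so $\mathcal{S}_2=\emptyset$.
\end{itemize}
In your characteristic-$2$ formulation, your $g$ is $\delta(f)\equiv x^2y^3\notin\overline{\n}^{[4]}$. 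Put $J=(\overline{\n}^{[4]}:x^2y^3)=(x^2,y)$. Then $f\in J$ and $\delta(f)\in J^{[2]}=(x^4,y^2)$, so both $g^2f^2$ and $g^2\delta(f)$ lie in $\overline{\n}^{[8]}$.

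Hence no argument, via $f^2=\phi(f)-2\delta(f)$ and root formulas or otherwise, can give the contradiction you want. Even your intermediate target fails here: $\tau_{(1)}\notin(2^{1/4},\n)S^+$. One checks this in the regular ring $A=\Z_2[2^{1/4}]\llbracket x^{1/4},y^{1/4}\rrbracket$, for which $A\to S^+$ is pure. Greedy step-by-step extension ignores the one thing you know about $t$: it lies lexicographically below the prefix of an element of $\mathcal{S}_{e_2}$.

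The missing idea is to extend periodically rather than greedily.
\begin{enumerate}
\item Let $j\le e_1$ be the first index with $t_j\ne s_j$. Then $t_j=0$, $s_j=1$, and $(t_1,\dots,t_j)\in\mathcal{S}_j$ by (1).
\item Because $t_j=0$, the element $h:=\prod_{i=1}^{j}f^{(1-t_i)2^{j-i}}\delta(f)^{t_i2^{j-i-1}}$ has integer exponents, so it lies in $\overline{S}$. Its square is the element of the Remark for $(t_1,\dots,t_j)$. By flatness of Frobenius, $h\notin\overline{\n}^{[2^j]}$.
\item Pick $\Phi\colon F^j_*\overline{S}\to\overline{S}$ with $\Phi(F^j_*h)=1$. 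The composite $\Phi\circ F^j_*\Phi\circ\cdots\circ F^{(m-1)j}_*\Phi$ sends $F^{mj}_*(h^{1+2^j+\cdots+2^{(m-1)j}})$ to $1$. Hence $h^{1+2^j+\cdots+2^{(m-1)j}}\notin\overline{\n}^{[2^{mj}]}$.
\item This says exactly that the $j$-periodic sequence $v$ with $v_{kj+i}=t_i$ ($1\le i\le j$, $k\ge 0$) satisfies $(v_1,\dots,v_{mj})\in\mathcal{S}_{mj}$.
\item Choose $mj\ge e_2$ and apply (1) to get $(v_1,\dots,v_{e_2})\in\mathcal{S}_{e_2}$. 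This sequence agrees with $s$ before position $j$ and has $0<1$ at position $j$, so it is smaller than $s$, contradicting minimality.
\end{enumerate}
This is the paper's argument. The point is that a prefix ending in $0$ corresponds to an honest element of $\overline{S}$, to which Frobenius-splitting iteration applies. An arbitrary element of $\mathcal{S}_e$, as the example shows, need not extend at all.
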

\begin{proof}
	The statement (1) is clear by definition. For (2), suppose that $s=(s_1,\dots,s_{e_2}):=\min \mathcal{S}_{e_2}$. By (1), we obtain $\mathcal{S}_{e_1}\neq \emptyset$. Let $t=(t_1,\dots,t_{e_1}):=\min \mathcal{S}_{e_1}$. Assume that $t<(s_1,\dots,s_{e_1})$. Then there exists $1\le j \le e_1$ such that $t_j=0<s_j=1$ and $s_i= t_i$ for $i<j$. Let 
\[
	g:=\prod_{i=1}^j\left( f^{(1-t_i) 2^{j-i}}(\delta(f))^{t_i2^{j-i-1}}\right).
\]
Then $g\notin \overline{\n}^{[2^{j}]}$.
Therefore, for any $m\ge 1$, we have
\[
	g^{1+2^{j}+\dots+2^{j(m-1)}} \notin \overline{\n}^{[2^{mj}]}.
\]
Indeed, there exists an $\overline{S}$-linear map $\Phi:F^{j}_*\overline{S}\to \overline{S}$ such that $F^{j}_*g \mapsto 1$. Then
\[
	\Phi\circ F^{j}_* \Phi \circ \dots \circ F^{(m-1)j}_*\Phi: F^{mj}_*\overline{S} \to \overline{S}
\]
maps $F^{mj}_*(g^{1+2^{j}+\dots+2^{j(m-1)}})$ to $1$, whence $g^{1+2^{j}+\dots+2^{j(m-1)}} \notin \overline{\n}^{[2^{mj}]}.$ Take $m\ge 1$ such that $mj\ge e_2$. For $1\le i \le j$ and $k \ge 1$, let $u_{kj+i}:=t_i$. Then $(u_1,\dots,u_{mj})\in \mathcal{S}_{mj}$ since $g^{1+2^{j}+\dots+2^{j(m-1)}} \notin \overline{\n}^{[2^{mj}]}.$ By (1), $u:=(u_1,\dots,u_{e_2})\in \mathcal{S}_{e_2}$. Since $u_i=t_i=s_i$ for $i<j$ and $u_j=t_j<s_j$, we obtain $u<s$, which is a contradiction.
\end{proof}

\begin{prop} \label{quasi-F-split case}
Let the notation be as in Setting \ref{setting ppt p=2}. Suppose that
\[
	f\delta(f)^{2^{e-1}-1}\notin \overline{\n}^{[2^{e}]}
\]
for some $e\ge 1$. Then $\ppt(f)=1$.
\end{prop}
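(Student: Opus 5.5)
We prove $\ppt(f)\ge 1$ and $\ppt(f)\le 1$. The upper bound is formal: $f\in\n$, so $f^t\in\n S^+$ for every $t\ge 1$, and $S\xrightarrow{f^t}S^+$ is then not pure. For the lower bound we show that $f^{t}\notin (2,x_1,\dots,x_n)S^+$ for a sequence of dyadic rationals $t<1$ with $t\to 1$. Purity of $S\xrightarrow{f^t}S^+$ is equivalent to this non-containment, because $S$ is regular with socle generator $[1/(2x_1\cdots x_n)]$ of $H^{n+1}_{\n}(S)$.

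\emph{Step 1 (periodic sequences from the hypothesis).} Set $g:=f\delta(f)^{2^{e-1}-1}$. Since $g\notin\overline{\n}^{[2^e]}$, Fedder's criterion gives an $\overline{S}$-linear map $\Phi:F^e_*\overline{S}\to\overline{S}$ with $\Phi(F^e_*g)=1$. Composing $\Phi\circ F^e_*\Phi\circ\cdots$ exactly as in the proof of Proposition \ref{prop set of sequences} gives
\[
g^{1+2^e+\cdots+2^{e(m-1)}}\notin\overline{\n}^{[2^{me}]}\quad\text{for every } m\ge 1 .
\]
By the Remark after the definition of $\mathcal{S}_e$, this says that a periodic $0/1$ sequence $s^{(m)}$ of length about $me$ lies in $\mathcal{S}_{me}$. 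In each block of $e$ digits the sequence has one $0$ (coming from the factor $f$) and $e-1$ ones (coming from $\delta(f)^{2^{e-1}-1}=\prod_{i}\delta(f)^{2^{e-i}}$). The block should be $(0,1,\dots,1)$, possibly after an index shift by one so that the exponents of $f$ and $\delta(f)$ match the Remark, which is normalized modulo $\overline{\n}^{[2^{e+1}]}$; this bookkeeping has to be fixed first. The output of this step is $\tau_{s^{(m)}}\notin\n S^+$.

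\emph{Step 2 (identifying the leading term of $f^{t_m}$).} Take $t_m:=1-2^{-me}$, or the dyadic truncation of $1$ adapted to the block structure, and write
\[
f^{t_m}=\prod_{j}f^{1/2^{j}} .
\]
Expand each factor with Lemma \ref{p-th root formula for ppt} (for $p=2$). For each $j$, choose the term $\phi^{-j}(f)$ when $s_j=0$. When $s_j=1$, the $\delta$-terms $2^{1/2^{j}+\cdots}(\phi^{-j}(f))^{2^{i-1}-1}\phi^{-(j+i)}(\delta(f))$ chain together across a run of ones, exactly as in the proof of Theorem \ref{diagonal hypersurface brt}. The resulting product is $\pm 2^{s^{(m)}}\tau_{s^{(m)}}$. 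Its exponent of $2$ is $\sum_j s_j/2^j<1$, so the term is nonzero modulo $(2,\n)S^+$ by Step 1: if $2^{c}\tau\in(2,\n)S^+$ with $c<1$, then $\tau\in\n S^+$, using that $\widehat{S^+}$ is BCM so $2^{c}$ is a nonzerodivisor modulo $\n$.

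\emph{Step 3 (the remaining terms; main obstacle).} We must show that every other product of terms lies in
\[
\bigl(2^{\,c_m+\epsilon},\ \n\bigr)S^+ ,\qquad c_m:=\textstyle\sum_j s_j/2^j ,
\]
and that any term with $2$-exponent exactly $c_m$ is either $2^{s}\tau_s$ for some $s<s^{(m)}$ or lies in $\n S^+$. This is where minimality is needed. I would first prove that $s^{(m)}$ is the lexicographic minimum of $\mathcal{S}_{me}$ in the relevant range, using Proposition \ref{prop set of sequences}(2) together with the hypothesis. Smaller sequences have an extra $0$ earlier, which corresponds to a higher power of $f$. I expect these to give products lying in $\overline{\n}^{[2^{k}]}$, since $f^{2}\cdot(\text{stuff})$ would exceed the single $f$ allowed by the periodic structure; this needs checking. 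The error terms $\beta$ carry a factor $(\phi^{-(e'+e)}(f))^{2^{e'+e-1}-1}$ and push into $\n S^+$, and the $\gamma$ terms have strictly larger $2$-exponent. Hence $f^{t_m}\notin(2,\n)S^+$ for all $m$, and letting $m\to\infty$ gives $\ppt(f)\ge 1$.
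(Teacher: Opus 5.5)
Your Step 1 is sound as an input. It is exactly how the paper shows $\mathcal{S}_i\neq\emptyset$ for all $i$. The block is $(1,\dots,1,0)$, because the Remark reads $g^2=f^2\prod_{i<e}\delta(f)^{2^{e-i}}$ as $s_i=1$ for $i<e$ and $s_e=0$.

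The gap is in Steps 2--3. The periodic sequence is in general \emph{not} the lexicographic minimum of $\mathcal{S}_{me}$. The claim you intend to prove there---that lexicographically smaller sequences, having an earlier $0$, give products in $\n S^+$---is false. Lexicographic order on $\{0,1\}^i$ is the same as the order of the $2$-exponents $\sum_j s_j/2^j$. So every $s<s^{(m)}$ in $\mathcal{S}_{me}$ contributes a term $2^{s}\tau_s$ with $\tau_s\notin\n S^+$ and strictly smaller $2$-exponent than your chosen term, and nothing in your argument controls it.

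For instance, take $f=x^2+y^2+z^2\in\Z_2\llbracket x,y,z\rrbracket$. Then $\delta(f)=-(x^2y^2+x^2z^2+y^2z^2)$, $f\in\overline{\n}^{[2]}$ and $f\delta(f)\equiv x^2y^2z^2\pmod{\overline{\n}^{[4]}}$. So the minimal $e$ is $2$ and your sequence is $(1,0,1,0,\dots)$. But $(1,0,0)\in\mathcal{S}_3$, because $f^6\delta(f)^4=f^2(f\delta(f))^4\equiv(x^4+y^4+z^4)x^8y^8z^8\not\equiv 0\pmod{\overline{\n}^{[16]}}$. By Proposition~\ref{prop set of sequences}(2), every $\min\mathcal{S}_{2m}$ with $m\ge 2$ then begins with $(1,0,0)$, so the periodic sequence is never the minimum.

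The missing idea is to decouple the two roles of the hypothesis. Use the periodic sequences only to get nonemptiness. Then use Proposition~\ref{prop set of sequences}(2) to take a single, generally non-periodic, sequence $(s_1,s_2,\dots)$ with $(s_1,\dots,s_i)=\min\mathcal{S}_i$ for all $i$. Prove by induction on $i$ that $f^{1/2+\dots+1/2^i}\equiv 2^{s^{(i)}}\tau_{s^{(i)}}+(\text{a }\beta\text{-term of larger }2\text{-exponent})$ modulo $(2^{s^{(i)}}2^{1/2^i},\n)S^+$.

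Minimality is used precisely when $s_i=1$. The competing term $2^{s^{(i-1)}}\tau_{s^{(i-1)}}\phi^{-i}(f)=2^{s^{(i-1)}}\tau_{(s_1,\dots,s_{i-1},0)}$ lies in $2^{s^{(i-1)}}\n S^+$, because $(s_1,\dots,s_{i-1},0)\notin\mathcal{S}_i$.

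The same minimality also disposes of the $\beta$-terms. It is not the factor $\phi^{-j}(f)$ that does this, since that factor alone need not lie in $\n S^+$. The $\beta$-term surviving from step $i-1$ (present when $s_{i-1}=1$), multiplied by $\phi^{-i}(f)$, has the same $2$-exponent as the new leading term. Using $(\phi^{-i}(f))^2=\phi^{-(i-1)}(f)-2\phi^{-i}(\delta(f))$, it becomes, up to a higher $2$-power, a multiple of $\tau_{(s_1,\dots,s_{i-2},0)}$. This lies in $\n S^+$ because $s_{i-1}=1$ is minimal.
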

\begin{proof}
	Let $e$ be the minimum positive integer $i$ such that $f\delta(f)^{2^{i-1}-1}\notin \overline{\n}^{[2^i]}$. Let $s_1=s_2=\dots =s_{e-1}=1$, $s_e=0$ and $s=(s_1,\dots,s_e)$. Then $s=\min \mathcal{S}_{e}$.
By the proof of Proposition \ref{prop set of sequences}, we have $\mathcal{S}_i\neq \emptyset$ for any $i\ge 1$. By Proposition \ref{prop set of sequences} (2), there exists $s=(s_1,s_2,\dots)\in \{0,1\}^{\N_{>0}}$ such that $(s_1,\dots,s_i)=\min \mathcal{S}_{i}$ for any $i\ge 1$.
By Lemma \ref{p-th root formula for ppt}, for any $i\ge 1$, there exists $\beta_i\in S^+$ such that
\begin{align*}
	f^{\frac{1}{2^i}} \equiv \phi^{-i}(f)+2^{\frac{1}{2^e}}\phi^{-(i+1)}(\delta(f))+2^{\frac{1}{2^{e}}+\frac{1}{2^{e+1}}}\phi^{-(i+1)}(f)\beta_i \pmod{2^{\frac{1}{2^{i-1}}}S^+}.
\end{align*}
Let $s^{(i)}:=(s_1,\dots,s_i)\in \mathcal{S}_i$ for any $i\ge 1$. 
It is enough to show the following claim.
\begin{cl}
	We have
	\begin{align*}
		f^{\frac{1}{2}+\dots+\frac{1}{2^{i}}} &\equiv 2^{s^{(i)}}\tau_{s^{(i)}} +2^{s^{(i-1)}}\cdot 2^{\frac{1}{2^i}+\frac{1}{2^{i+1}}}\tau_{s^{(i-1)}}\phi^{-(i+1)}(f)\beta_i \\
        & \pmod{(2^{s^{(i)}}\cdot2^{\frac{1}{2^i}},x_1,\dots.x_n) S^+}
	\end{align*}
	for any $i\ge 1$.
\end{cl}
\begin{clproof} 
We show the claim by induction on $i$.
Suppose that $i=1$. If $s_1=0$, then
\[
	f^{\frac{1}{2}} \equiv \phi^{-1}(f) \pmod{2^\frac{1}{2}S^+}.
\]
If $s_1=1$, then
\[
	f^{\frac{1}{2}} \equiv 2^{\frac{1}{2}}\phi^{-2}(\delta(f)) +2^{\frac{1}{2}+\frac{1}{4}}\phi^{-2}(f)\beta_1 \pmod{(2,x_1,\dots,x_n)S^+}.
\]
Hence, the case where $i=1$ follows. Suppose that $i\ge 2$. First, suppose that $s_i=0$.
Then we have
\begin{align*}
	f^{\frac{1}{2}+\dots+\frac{1}{2^i}} &= f^{\frac{1}{2}+\dots+\frac{1}{2^{i-1}}}f^{\frac{1}{2^{i}}} \\
 & \equiv 2^{s^{(i-1)}}\tau_{s^{(i-1)}}\phi^{-i}(f)\\
& \equiv 2^{s^{(i)}}\tau_{s^{(i)}} \pmod{(2^{s^{(i-1)}}\cdot 2^{\frac{1}{2^{i}}},x_1,\dots,x_n)S^+}.
\end{align*}
Next, suppose that $s_i=1$. Then we have
\begin{align*}
	f^{\frac{1}{2}+\dots+\frac{1}{2^i}} &= f^{\frac{1}{2}+\dots+\frac{1}{2^{i-1}}}f^{\frac{1}{2^{i}}} \\
	& \equiv 2^{s^{(i-1)}}\cdot 2^{\frac{1}{2^i}}\tau_{s^{(i-1)}}\phi^{-(i+1)}(\delta(f))+2^{s^{(i-1)}}\cdot 2^{\frac{1}{2^i}+\frac{1}{2^{i+1}}}\tau_{s^{(i-1)}} \phi^{-(i+1)}(f)\beta_i \\
&\equiv2^{s^{(i)}}\tau_{s^{(i)}}+2^{s^{(i-1)}}\cdot 2^{\frac{1}{2^i}+\frac{1}{2^{i+1}}}\tau_{s^{(i-1)}} \phi^{-(i+1)}(f)\beta_i\\
&\pmod{(2^{s^{(i)}}\cdot 2^{\frac{1}{2^{i}}},x_1,\dots,x_n)S^+}.
\end{align*}
Here note that $2^{s^{(i)}}\cdot 2^{\frac{1}{2^i}}=2^{s^{(i-1)}}\cdot 2^{\frac{1}{2^{i-1}}}$.
\end{clproof}
By the above cliam, we have
\begin{align*}
	f^{\frac{1}{2}+\dots+\frac{1}{2^{i}}} \notin \n S^+
\end{align*}
for any $i\ge 1$, which completes the proof. 
\end{proof}

\begin{prop} \label{prop perfectoid pure not quasi-F-split}
	Let the notation be as in Setting \ref{setting ppt p=2}. Suppose that
\[
	\delta(f)^{2^{i}-1}\notin \overline{\n}^{[2^{i+1}]}
\]
for any $1\le i \le e$. Then $\ppt(f)>1-1/2^e$.
\end{prop}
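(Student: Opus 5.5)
The plan is to show the slightly stronger, BCM-flavoured statement that $f^{1-\frac{1}{2^e}}\notin (2,x_1,\dots,x_n)B$ for every perfectoid BCM $S^+$-algebra $B$, and then get the strict inequality for free. Indeed, exactly as in the proof of Proposition \ref{BCM-regularity and brt}, \cite[Proposition 6.10]{Ma-Schwede21} makes $(S,f^{t+\epsilon})$ BCM-regular for $0<\epsilon\ll 1$ once $f^{t}\notin \n B$ for all such $B$. In particular $(S,f^{t+\epsilon})$ is $+$-regular, hence $\ppt(f)>t$. So we take $t=1-1/2^e=\frac12+\dots+\frac1{2^e}$ and work with the sequence $s=(1,1,\dots,1)\in\{0,1\}^e$. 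For this sequence
\[
	\tau_s=\prod_{i=1}^{e}\phi^{-(i+1)}(\delta(f)),\qquad 2^s=2^{\frac12+\dots+\frac1{2^e}}.
\]
By the Remark after the definition of $\mathcal{S}_e$, the condition $\tau_s\notin \n S^+$ is exactly $\delta(f)^{2^e-1}\notin\overline{\n}^{[2^{e+1}]}$, which is the hypothesis for $i=e$.

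The first step is to run the inductive computation of the Claim in Proposition \ref{quasi-F-split case} with all $s_i=1$. We write $f^{\frac1{2^i}}$ using Lemma \ref{p-th root formula for ppt} as
\[
	\phi^{-i}(f)+2^{\frac1{2^i}}\phi^{-(i+1)}(\delta(f))+2^{\frac1{2^i}+\frac1{2^{i+1}}}\phi^{-(i+1)}(f)\beta_i
\]
modulo $2^{\frac1{2^{i-1}}}$. Multiplying these together for $i=1,\dots,e$ gives
\[
	f^{1-\frac1{2^e}}\equiv 2^s\tau_s+\sum_{i=1}^{e}E_i \pmod{(2^s\cdot 2^{\frac1{2^e}},x_1,\dots,x_n)S^+}.
\]
Here $E_i$ is the error created at step $i$, namely $2^{s^{(i-1)}}\cdot 2^{\frac1{2^i}+\frac1{2^{i+1}}}\tau_{s^{(i-1)}}\phi^{-(i+1)}(f)\beta_i$, multiplied by the later factors $f^{\frac1{2^j}}$ for $j>i$. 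Where a factor $\phi^{-j}(f)$ is not already forced into $\n S^+$, we expand it further rather than discarding it.

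The second step, which I expect to be the main obstacle, is to show that each $E_i$ lies in $(2^s\cdot 2^{\frac1{2^e}},x_1,\dots,x_n)B$. Compared with the main term, $E_i$ carries an extra factor $\phi^{-(i+1)}(f)$ but only the $2$-power $2^{\frac1{2^i}+\frac1{2^{i+1}}}$ in place of $2^{\frac1{2^i}}\cdot 2^{\frac1{2^{i+1}}}\cdots$. After absorbing the later factors, the part of $E_i$ not already in the ideal has the form $2^{c}\cdot(\text{monomial in }\phi^{-j}(f),\phi^{-j}(\delta(f)))\cdot\beta$ with $c<s+\frac1{2^e}$. I would try to show that the monomial part lies in $\n S^+$ by a Frobenius-root/bracket-power computation in $\overline S$. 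Where this is not immediate, I would split $\beta_i$ once more via Lemma \ref{p-th root formula for ppt}, so that each surviving piece has $2$-adic order at least $s+\frac1{2^e}$. The hypotheses for the smaller indices $i<e$ should enter here. They guarantee that at every intermediate stage the ``all ones'' branch is the one of minimal $2$-adic order that survives modulo $\n$, in the sense of Proposition \ref{prop set of sequences}, so no error term can overtake it. I do not yet see that the $i<e$ conditions follow from the $i=e$ one, so I will use them as given.

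Finally, granting that $f^{1-\frac1{2^e}}\equiv 2^s\tau_s$ modulo $(2^s\cdot 2^{\frac1{2^e}},x_1,\dots,x_n)B$, suppose for contradiction that $f^{1-\frac1{2^e}}\in\n B$. Then $2^s\tau_s\in(2^s\cdot2^{\frac1{2^e}},x_1,\dots,x_n)B$. Since $2^{1/2^N},x_1,\dots,x_n$ is a regular sequence on $B$ (BCM, perfectoid), we can cancel the power $2^s$ and obtain $\tau_s\in(2^{\frac1{2^e}},x_1,\dots,x_n)B$. Reducing modulo $2^{1/2^e}$ moves the problem to characteristic $2$, where $B/2^{1/2^e}$ is a big Cohen--Macaulay algebra over a ring containing the perfection of $\overline S$. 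By flatness of Frobenius on the regular ring $\overline S$, membership of $\tau_s$ in $\n$ there is equivalent to $\delta(f)^{2^e-1}\in\overline{\n}^{[2^{e+1}]}$. This contradicts the hypothesis for $i=e$, so $f^{1-\frac1{2^e}}\notin\n B$, and the first paragraph then gives $\ppt(f)>1-1/2^e$.
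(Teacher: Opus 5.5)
\textbf{There is a genuine gap: your Step 2 is false as stated, and your proposal misses the reduction that makes the ``all ones'' term the leading one.}

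\textbf{First gap: the congruence modulo $\n B$ fails.} The congruence $f^{1-\frac{1}{2^e}}\equiv 2^s\tau_s \pmod{\n B}$ is false in general, even in the non-quasi-$F$-split regime where the paper works. The last error term is
\[
E_e=2^{1-\frac{1}{2^{e+1}}}\tau_{(1,\dots,1)}\phi^{-(e+1)}(f)\beta_e \quad (\text{with } e-1 \text{ ones}),
\]
and it carries only the power $2^{1-1/2^{e+1}}$. Re-expanding $\beta_e$ by Lemma \ref{p-th root formula for ppt} just produces pieces of $2$-adic order $1-1/2^{e+j}$ for $j\ge 1$, and none of these need vanish modulo $\n$.

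For example, take $f=x^2+y^3$ and $e=1$. Then $\delta(f)=-x^2y^3\notin\overline{\n}^{[4]}$, and Lemma \ref{p-root formula} with $e=2$ gives
\[
f^{\frac12}\equiv -2^{\frac12}x^{\frac12}y^{\frac34}+2^{\frac34}x^{\frac34}y^{\frac38}\pmod{(2^{\frac78},x,y)S^+}.
\]
Hence $f^{1/2}-2^{1/2}\tau_{(1)}\notin\n B$.

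The remedy is to aim lower. It suffices to have the congruence modulo $(2^{1-\frac{1}{2^{e+1}}},x_1,\dots,x_n)B$. This ideal contains $\n B$ and absorbs $E_e$ for free. Your cancellation argument then works verbatim with $2^{1/2^{e+1}}$ in place of $2^{1/2^e}$. It gives $f^{1-1/2^e}\notin(2^{1-\frac{1}{2^{e+1}}},x_1,\dots,x_n)B\supseteq\n B$, which is exactly the paper's final step.

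\textbf{Second gap: the wrong hypotheses control the intermediate stages.} The conditions $\delta(f)^{2^i-1}\notin\overline{\n}^{[2^{i+1}]}$ for $i<e$ are redundant. Indeed, if $\delta(f)^{2^i-1}\in\overline{\n}^{[2^{i+1}]}$, then $\delta(f)^{2^{i+1}-1}=\delta(f)\,(\delta(f)^{2^i-1})^2\in\overline{\n}^{[2^{i+2}]}$. Moreover, these conditions only say that $(1,\dots,1)\in\mathcal{S}_i$, not that it is the minimum.

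Multiplying by the leading part $\phi^{-i}(f)$ of $f^{1/2^i}$ creates cross terms:
\begin{itemize}
\item $2^{1-\frac{1}{2^{i-1}}}\tau_{(1,\dots,1)}\phi^{-i}(f)$ (with $i-1$ ones), whose $2$-adic order is strictly smaller than the main term's;
\item $E_{i-1}\phi^{-i}(f)$, whose $2$-adic order equals the main term's.
\end{itemize}
These are killed modulo $\n$ by the conditions $f\delta(f)^{2^{i-1}-1}\in\overline{\n}^{[2^i]}$ for all $i\ge 1$, which your hypotheses do not provide.

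For instance, take $f=x^2+xy$ and $e=1$. Then $\delta(f)=-x^3y\notin\overline{\n}^{[4]}$, yet $f^{1/2}\equiv x^{1/2}y^{1/2}\pmod{(2^{1/2},x,y)S^+}$. So $2^{1/2}\tau_{(1)}$ is not the leading term, and your contradiction argument does not apply.

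The paper first disposes of this situation with Proposition \ref{quasi-F-split case}, which gives $\ppt(f)=1$. It then assumes $f\delta(f)^{2^{i-1}-1}\in\overline{\n}^{[2^i]}$ for all $i$ and runs the Claim of that proposition with $s=(1,\dots,1)$. This yields $f^{1-1/2^e}\equiv 2^s\tau_s+E_e\pmod{\n S^+}$, with no other surviving terms.

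\textbf{What is sound.} Your outer framework is correct: working in an arbitrary perfectoid BCM $B$ and invoking \cite[Proposition 6.10]{Ma-Schwede21} for strictness. It in fact spells out what the paper's last sentence leaves implicit.
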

\begin{proof}
	By Proposition \ref{quasi-F-split case}, we may assume that	
\[
	f\delta(f)^{2^{i-1}-1}\in \overline{\n}^{[2^{i}]}
\]
for any $i\ge 1$. By an argument similar to Proposition \ref{quasi-F-split case}, we have
\begin{align*}
	f^{\frac{1}{2}+\cdots+\frac{1}{2^e}} &\equiv 2^{\frac{1}{2}+\cdots+\frac{1}{2^e}}\phi^{-2}(\delta(f))\cdots\phi^{-(e+1)}(\delta(f)) \\
& \quad +2^{\frac{1}{2}+\dots+\frac{1}{2^{e+1}}}\phi^{-2}(\delta(f))\cdots\phi^{-e}(\delta(f))\phi^{-(e+1)}(f)\beta_e \\
&\pmod{(2,x_1,\dots,x_n)S^+}.
\end{align*}
Since $\delta(f)^{2^{e}-1}\notin \overline{\n}^{[2^{e+1}]}$, we obtain
\[
	2^{\frac{1}{2}+\dots+\frac{1}{2^e}}\phi^{-2}(\delta(f))\cdots\phi^{-(e+1)}(\delta(f)) \notin (2^{\frac{1}{2}+\dots+\frac{1}{2^{e+1}}},x_1,\dots,x_n) S^+,
\]
which shows that $\ppt(f)> 1-1/2^e$.
\end{proof}

\begin{prop} \label{ppt splitting-order sequences}
	Let the notation be as in Setting \ref{setting ppt p=2}. Suppose that 
\[
	f\delta(f)^{2^{i-1}-1}\in \overline{\n}^{[2^{i}]}
\]
for any $i\ge 1$, and there exists $i\ge 1$ such that
\[
	\delta(f)^{2^{i}-1}\in \overline{\n}^{[2^{i+1}]}.
\]
Let $e$ be the minimum positive integer $i$ such that $\delta(f)^{2^{i}-1}\in \overline{\n}^{[2^{i+1}]}$ and $f\delta(f)^{2^{i}-2}\in \overline{\n}^{[2^{i+1}]}$. Then $(S,f^{1-1/2^e})$ is not $+$-regular.
\end{prop}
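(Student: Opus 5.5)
We need to show that $f^{1/2+\cdots+1/2^e}\in \n S^+=(2,x_1,\dots,x_n)S^+$. Since $\frac12+\dots+\frac1{2^e}=1-\frac1{2^e}$, this says exactly that $S\xrightarrow{f^{1-1/2^e}}S^+$ is not pure, i.e. $(S,f^{1-1/2^e})$ is not $+$-regular.

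The plan is to write $f^{1-1/2^e}=\prod_{i=1}^e f^{1/2^i}$ and expand each factor using Lemma \ref{p-th root formula for ppt} with $p=2$. That lemma gives, for each $i$,
\[
f^{\frac{1}{2^i}}=\phi^{-i}(f)+2^{\frac1{2^i}}\phi^{-(i+1)}(\delta(f))+2^{\frac1{2^i}+\frac1{2^{i+1}}}\phi^{-(i+1)}(f)\beta_i+2^{\frac{2}{2^i}}\gamma_i ,
\]
which has four kinds of pieces: $A_i=\phi^{-i}(f)$, $B_i=2^{1/2^i}\phi^{-(i+1)}(\delta(f))$, $C_i$ (the $\beta_i$-term) and $D_i$ (the $\gamma_i$-term). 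Multiplying out, we must show every product $P_1\cdots P_e$ with $P_i\in\{A_i,B_i,C_i,D_i\}$ lies in $\n S^+$. I organize the products by their first index $j$ with $P_j\neq B_j$, so that $P_1=B_1,\dots,P_{j-1}=B_{j-1}$. The $2$-power of such a product is at least $\frac12+\dots+\frac1{2^{j-1}}$ plus the contribution of $P_j$.

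Throughout I use the following translation. Because $\phi(g)\equiv g^2 \pmod{2S}$, we have
\[
\phi^{-2}(\delta(f))\cdots\phi^{-j}(\delta(f))\equiv\phi^{-j}\bigl(\delta(f)^{2^{j-1}-1}\bigr)
\]
modulo $2^{c}S^+$ for some $c>0$. So each remaining factor is a $\phi^{-k}$ of an element of $S$, and a condition $g\in\overline{\n}^{[2^k]}$ gives $\phi^{-k}(g)\in\n S^+$. The error terms carry an extra positive power of $2$, and I push them into the same bookkeeping by induction on the $2$-exponent. This is just as in the Claim in the proof of Proposition \ref{quasi-F-split case}.

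The cases are then as follows.
\begin{itemize}
\item \emph{$P_j=A_j$.} The product contains $\phi^{-j}(f\,\delta(f)^{2^{j-1}-1})$. This lies in $\n S^+$ by the first hypothesis $f\delta(f)^{2^{j-1}-1}\in\overline{\n}^{[2^j]}$ (for $j=1$ it just uses $\phi^{-1}(f)\in \n S^+$).
\item \emph{$P_j=D_j$.} The $2$-exponent is already $\frac12+\dots+\frac1{2^{j-1}}+\frac2{2^j}=1$, so the product lies in $2S^+$.
\item \emph{All $P_i=B_i$.} The product is, up to higher-order error, $2^{1-1/2^e}\phi^{-(e+1)}(\delta(f)^{2^e-1})$, which lies in $\n S^+$ by the choice of $e$.
\item \emph{$P_j=C_j$ with $j=e$.} The product is $2^{1-1/2^e+1/2^{e+1}}\phi^{-(e+1)}(f\,\delta(f)^{2^e-2})\beta_e$, which is killed by the second defining condition of $e$.
\item \emph{$P_j=C_j$ with $j<e$.} This is where I expect the main obstacle. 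Minimality of $e$ does not directly give $f\delta(f)^{2^j-2}\in\overline{\n}^{[2^{j+1}]}$, so I use the next factor instead. Its leading piece $A_{j+1}=\phi^{-(j+1)}(f)$ combines with $C_j$ to give $\phi^{-(j+1)}(f^2\delta(f)^{2^j-2})$. This lies in $\n S^+$ because $f\delta(f)^{2^{j-1}-1}\in\overline{\n}^{[2^j]}$ implies $(f\delta(f)^{2^{j-1}-1})^2\in\overline{\n}^{[2^{j+1}]}$. Every other choice of $P_{j+1}$ adds at least $2^{1/2^{j+1}}$, which brings the total exponent to at least $\frac12+\dots+\frac1{2^{j}}+\frac{2}{2^{j+1}}=1$, so those products lie in $2S^+$.
\end{itemize}

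The delicate part will be verifying carefully that all error terms from the ``$\equiv$ modulo $2^{c}$'' Frobenius approximations, and from the higher-order parts of Lemma \ref{p-th root formula for ppt}, really land in total $2$-exponent $\ge 1$. If the expansion to first order is too coarse, I would apply Lemma \ref{p-th root formula for ppt} with a larger precision parameter and track the $2$-exponents inductively in $i$, as in the Claim of Proposition \ref{quasi-F-split case}. Together these cases give $f^{1-1/2^e}\in\n S^+$, hence $(S,f^{1-1/2^e})$ is not $+$-regular.
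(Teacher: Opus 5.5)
Your proposal is correct and is essentially the paper's argument. The paper invokes the inductive claim from Propositions \ref{quasi-F-split case} and \ref{prop perfectoid pure not quasi-F-split} to reduce $f^{1-1/2^e}$ modulo $\n S^+$ to the all-$B$ term plus the $B_1\cdots B_{e-1}C_e$ term, then kills both using the two defining conditions of $e$; your case split by first non-$B$ index, with the earlier $\beta_j$-terms absorbed through the next factor $\phi^{-(j+1)}(f)$, is exactly that induction written out. The bookkeeping you worry about is harmless: $\phi(h)\equiv h^2 \pmod{2S_\infty}$ gives the Frobenius approximations with $c=1$, and the first-order expansion with error $2^{2/2^i}\gamma_i$ already suffices. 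Also, $\phi^{-1}(f)\in\n S^+$ is the $i=1$ case of the hypothesis, not automatic.
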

\begin{proof}
	By an argument similar to Proposition \ref{prop perfectoid pure not quasi-F-split}, we have
\begin{align*}
	f^{\frac{1}{2}+\cdots+\frac{1}{2^e}}
&\equiv 2^{\frac{1}{2}+\cdots+\frac{1}{2^e}}\phi^{-2}(\delta(f))\cdots\phi^{-(e+1)}(\delta(f))\\
& + 2^{\frac{1}{2}+\dots+\frac{1}{2^{e+1}}}\phi^{-2}(\delta(f))\cdots\phi^{-e}(\delta(f))\phi^{-(e+1)}(f)\beta_e \\
&\pmod{\n S^+}.
\end{align*}
Since $\delta(f)^{2^{e}-1}\in \overline{\n}^{[2^{e+1}]}$ and $f\delta(f)^{2^{e}-2}\in \overline{\n}^{[2^{e+1}]}$, we have
	$\phi^{-2}(\delta(f))\cdots\phi^{-(e+1)}(\delta(f))\in \n S^+$ and $\phi^{-2}(\delta(f))\cdots\phi^{-e}(\delta(f))\phi^{-(e+1)}(f)\in \n S^+$.
Hence,
\[
	f^{\frac{1}{2}+\cdots+\frac{1}{2^e}}\in \n S^+.
\]
Therefore, $(S,f^{1-1/2^e})$ is not $+$-regular.

\end{proof}

Next, we relate the above results to the notion of splitting-order sequences.

\begin{setting} \label{setting ppt p=2 b}
	With notation as in Setting \ref{setting ppt p=2}, let $u$ be a generator of $\Hom_{S}(F_*\overline{S},\overline{S})$ as an $F_*\overline{S}$-module.
\end{setting}

\begin{defn}[\cite{Yoshikawa25b}] \label{def of splitting-order sequences}
	Let the notation be as in Setting \ref{setting ppt p=2 b}. For integers $0\le l_1,\dots,l_{i-1}\le p-1$ and $0\le l_i\le p$, the ideal $I(l_1,\dots,l_i)$ of $\overline{S}$ is defined inductively by
\begin{itemize}
	\item $I(l_i):=f^{p-l_i}\overline{S}$,
	\item $I(l_1,\dots,l_i):=f^{p-l_1-1}u(F_*((\delta(f))^{l_1}I(l_2,\dots,l_i)))+f^{p-l_1}\overline{S}$.
\end{itemize}
The \textit{splitting-order sequence} $\mathbf{s}(f)=(s_0,s_1,\dots)$ of $f$ is defined inductively as follows:
\begin{itemize}
	\item $s_0:=0$,
	\item If $s_i=p$ for some $i$, then $s_j=p$ for all $j>i$,
	\item Suppose that $s_1,\dots,s_{i-1}\le p-1$. Then we define $s_i$ by
	\[
		s_i:=\max\{0\le s \le p \mid I(s_1,\dots,s_{i-1},s)\subseteq \m^{[p]}\}.
	\]
\end{itemize}
\end{defn}

\begin{lem} \label{lemma trace}
	Let $k$ be a perfect field and $(R,\m):=k\llbracket x_1,\dots,x_n\rrbracket$. Let $u$ be a generator of $\Hom_R(F_*R,R)$ as an $F_*R$-module,  $I$ be an ideal of $R$ and $i\ge 0$ be an integer. Then $u(F_*I)\subseteq \m^{[p^i]}$ if and only if $I\subseteq \m^{[p^{i+1}]}$.
\end{lem}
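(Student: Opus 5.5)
We reduce everything to the structure of the Frobenius trace on a regular local ring over a perfect field. Since $R=k\llbracket x_1,\dots,x_n\rrbracket$ with $k$ perfect, $F_*R$ is a free $R$-module with basis $\{F_*x^{\mathbf a}\mid 0\le a_j\le p-1\}$. We may take $u$ to be the map that sends $F_*x^{\mathbf a}$ to $1$ when $\mathbf a=(p-1,\dots,p-1)$ and to $0$ for the other basis elements. This is legitimate because any other generator differs from this one by precomposition with multiplication by a unit of $F_*R$, and precomposing with a unit does not change $u(F_*I)$ for an ideal $I$. For a monomial $x^{\mathbf b}$ with $b_j=pq_j+r_j$ and $0\le r_j\le p-1$, we then get $u(F_*(c\,x^{\mathbf b}))=c^{1/p}x^{\mathbf q}$ when all $r_j=p-1$, and $0$ otherwise.

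The reverse implication follows directly from $R$-linearity. If $I\subseteq \m^{[p^{i+1}]}$, then $I$ is generated by elements $x_j^{p^{i+1}}g=(x_j^{p^i})^p g$. Hence $u(F_*(x_j^{p^{i+1}}g))=x_j^{p^i}u(F_*g)\in \m^{[p^i]}$.

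For the forward implication, we argue by contraposition. Suppose $g\in I$ with $g\notin \m^{[p^{i+1}]}$. Since $\m^{[p^{i+1}]}$ is a monomial ideal, some monomial $x^{\mathbf b}$ appears in $g$ with a unit coefficient and with $b_j\le p^{i+1}-1$ for all $j$. Write $b_j=pq_j+r_j$ and set $h:=x^{\mathbf c}g\in I$, where $c_j=p-1-r_j$. The monomial $x^{\mathbf b+\mathbf c}$ then has all exponents of the form $pq_j+p-1$, with $q_j\le p^i-1$. Applying $u$ gives $u(F_*h)=\sum_{\mathbf m}a_{\mathbf m}^{1/p}x^{\mathbf m}$, where the sum runs over the monomials $x^{p\mathbf m+(p-1)\mathbf 1}$ of $h$ and $a_{\mathbf m}$ is the coefficient of that monomial in $h$.

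The step that needs care is that distinct $\mathbf m$ come from distinct monomials of $h$. Because of this, no cancellation occurs, and the coefficient of $x^{\mathbf q}$ in $u(F_*h)$ is the $p$-th root of a unit. Since $q_j\le p^i-1$ for every $j$, the monomial $x^{\mathbf q}$ lies outside $\m^{[p^i]}$. Again using that $\m^{[p^i]}$ is a monomial ideal and that membership can be tested monomial by monomial (including for power series), we conclude $u(F_*h)\notin \m^{[p^i]}$, and therefore $u(F_*I)\not\subseteq \m^{[p^i]}$.

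The main technical obstacle is making this monomial bookkeeping rigorous for power series, that is, justifying that $u$ commutes with infinite sums. This holds because $u$ is continuous in the $\m$-adic topology, being $R$-linear on the finite free module $F_*R$. An alternative is to use the standard identity $u(F_*(J^{[p]}\cdot\,))$-style duality, $u(F_*I)\subseteq J \iff I\subseteq J^{[p]}$ for ideals $J$, applied with $J=\m^{[p^i]}$, since $(\m^{[p^i]})^{[p]}=\m^{[p^{i+1}]}$. I would cite this as the well-known Fedder-type fact if a reference is acceptable.
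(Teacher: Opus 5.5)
Your proposal is correct and is essentially the paper's argument: the ``if'' direction uses $F_*\m^{[p^{i+1}]}=\m^{[p^i]}F_*R$, and the ``only if'' direction multiplies an element of $I\setminus\m^{[p^{i+1}]}$ by a monomial and applies $u$. The only difference is in that multiplication step. The paper multiplies up to the socle monomial $x_1^{p^{i+1}-1}\cdots x_n^{p^{i+1}-1}$ modulo $\m^{[p^{i+1}]}$, which tacitly requires choosing a divisibility-minimal monomial of $f$, and then uses that $u(F_*(c\,x_1^{p-1}\cdots x_n^{p-1}))$ is a unit. You instead normalize $u$, only pad the exponents to residue $p-1$, and read off the coefficient of $x^{\mathbf q}$ directly, which avoids that choice.
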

\begin{proof}
	If $I\subseteq \m^{[p^{i+1}]}$, then $F_*I\subseteq F_*\m^{[p^{i+1}]}=\m^{[p^i]}F_*R$. Hence, we have 
\[
u(F_*I)\subseteq \m^{[p^i]}\cdot u(F_*R)=\m^{[p^i]}.
\]
Suppose that $I\not \subseteq \m^{[p^{i+1}]}$. Take an element $f \in I\setminus \m^{[p^{i+1}]}$. There exists a monomial $g$ such that $gf=cx_1^{p^{i+1}-1}\cdots x_n^{p^{i+1}-1}+h$, where $c\in k \setminus \{0\}$ and $h\in \m^{[p^{i+1}]}$. Then we have
\begin{align*}
	u(F_*(gf)) &\equiv u(F_*(cx_1^{p^{i+1}-1}\cdots x_{n}^{p^{i+1}-1}))=x_1^{p^{i}-1}\cdots x_n^{p^{i}-1}u(F_*(cx_1^{p-1}\dots x_n^{p-1})) \\
&\pmod{\m^{[p^i]}}.
\end{align*}
Since $u(F_*(cx_1^{p-1}\dots x_n^{p-1}))$ is a unit of $R$, $u(F_*(gf))\notin \m^{[p^i]}$. Therefore, $u(F_*I)\not \subseteq \m^{[p^i]}$.
\end{proof}

\begin{prop} \label{prop splitting-order sequences}
	Let the notation be as in Setting \ref{setting ppt p=2 b} and $\mathbf{s}(f)=(s_0,s_1,\dots)$ be the splitting-order sequence of $f$.
\begin{enumerate}
	\item Suppose that $s_i \le 1$ for any $i\ge 1$. Then $(s_1,\dots,s_i)$ is the minimum element of $\mathcal{S}_i$ for any $i\ge 1$.
	\item Suppose that there exists an integer $i$ such that $s_i=2$. Let $e$ be the minimum integer such that $s_e=2$. Then $(\delta(f))^{2^{i-1}-1} \notin \overline{\n}^{[2^{i}]}$ for $i<e$ and $(\delta(f))^{2^{e-1}-1} \in \overline{\n}^{[2^{e}]}$.
\end{enumerate}
\end{prop}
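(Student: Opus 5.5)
The plan is to turn the inductive definition of $I(l_1,\dots,l_i)$ into an explicit description in terms of iterated traces. Then I will apply Lemma \ref{lemma trace} repeatedly, so that each containment $I(\dots)\subseteq\overline{\n}^{[2]}$ becomes a Frobenius-power condition of exactly the type in the Remark following the definition of $\mathcal{S}_e$.

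\emph{Unwinding the ideals.} Since $p=2$, the projection formula gives $f^{a}u(F_*J)=u(F_*(f^{2a}J))$. Hence
\[
I(l_1,\dots,l_i)=u\bigl(F_*(f^{2(1-l_1)}\delta(f)^{l_1}I(l_2,\dots,l_i))\bigr)+f^{2-l_1}\overline{S}.
\]
Iterating this, $I(l_1,\dots,l_i)$ is a sum of terms $u^{j-1}(F^{j-1}_*(g_j))\overline{S}$ for $j=1,\dots,i$, where $u^{j-1}$ denotes the composite $u\circ F_*u\circ\cdots$. Here $g_j=\prod_{k<j}f^{(1-l_k)2^{k}}\delta(f)^{l_k2^{k-1}}\cdot f^{(2-l_j)2^{j-1}}$, with exponents doubling at each step. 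Applying Lemma \ref{lemma trace} $j-1$ times, the condition $I(l_1,\dots,l_i)\subseteq\overline{\n}^{[2]}$ is equivalent to $g_j\in\overline{\n}^{[2^{j}]}$ for all $j\le i$. Since a term with larger $l_j$ divides one with smaller $l_j$, the conditions for $j<i$ are automatic once $s_1,\dots,s_{i-1}$ were chosen maximal. So the only new condition in defining $s_i$ is the one for $g_i$. I will then match $g_i$ with the Remark's criterion for $\tau_s$, keeping track of the index reversal and of the fact that $l_i=1$ in the last slot contributes $f^{2^{i-1}}$, which corresponds to a last entry $0$ of $s$, while $l_i=0$ contributes $f^{2^i}$.

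\emph{Part (1).} Assume $s_i\le1$ for all $i$. By maximality of $s_i$, we have $g_i(s_1,\dots,s_{i-1},s_i+1)\notin\overline{\n}^{[2^{i}]}$, and this product is exactly the one attached to the sequence with the $i$-th entry flipped. I will prove by induction on $i$ that $(s_1,\dots,s_i)\in\mathcal{S}_i$ and that no lexicographically smaller sequence lies in $\mathcal{S}_i$. Using Proposition \ref{prop set of sequences}(2), $\min\mathcal{S}_i$ restricts to $\min\mathcal{S}_{i-1}=(s_1,\dots,s_{i-1})$. So it remains to decide the last coordinate, and the dichotomy "$0\in$ or not" is precisely the containment condition defining $s_i$ under the dictionary above.

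\emph{Part (2).} Let $e$ be minimal with $s_e=2$. I first want to show $s_1=\dots=s_{e-1}=1$. The idea is that if some $s_j=0$, then a factor $f^{2\cdot(\cdot)}$ persists in all later $g$'s, and the ideal with last entry $2$ contains the $f^{2-l}$ summand that already failed. I expect this step to be the main obstacle, and I may need Proposition \ref{quasi-F-split case}-style minimality to rule it out. Granting this, $I(1,\dots,1,l)$ has top generator $\delta(f)^{2^{i-1}-1}f^{2-l}$ up to the trace identification. Then $s_i=1<2$ for $i<e$ gives $\delta(f)^{2^{i-1}-1}\notin\overline{\n}^{[2^i]}$, via $l=2$ failing; here the $2^{i-1}-1$ comes from summing $2^{k-1}$ over $k<i$. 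Likewise $s_e=2$ gives $\delta(f)^{2^{e-1}-1}\in\overline{\n}^{[2^e]}$.
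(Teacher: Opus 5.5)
There is a genuine gap. Your overall plan of unwinding $I(l_1,\dots,l_i)$ via $a\,u(F_*b)=u(F_*(a^2b))$ and Lemma~\ref{lemma trace} is the right one, but your formula for $g_j$ is wrong, and this gives a wrong dictionary in Part (1). The level-$j$ summand is $u^{j-1}(F^{j-1}_*(g_j\overline{S}))$ with
\[
g_j=\prod_{k<j}f^{(1-l_k)2^{j-k}}\delta(f)^{l_k2^{j-1-k}}\cdot f^{2-l_j}.
\]
So the first index carries the largest exponent, exactly as in the Remark (there is no reversal), and the last slot enters as $f^{2-l_j}$.

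With this formula, $g_i(s_1,\dots,s_{i-1},1)\notin\overline{\n}^{[2^i]}$ is equivalent, after squaring, to $(s_1,\dots,s_{i-1},0)\in\mathcal{S}_i$. So your ``$0\in$ or not'' dichotomy is fine. However, $g_i(s_1,\dots,s_{i-1},2)$ is just the Remark's product for $(s_1,\dots,s_{i-1})$. Hence, when $s_i=1$, maximality of $s_i$ only gives $(s_1,\dots,s_{i-1})\in\mathcal{S}_{i-1}$, not $(s_1,\dots,s_{i-1},1)\in\mathcal{S}_i$.

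A final entry $1$ involves $\phi^{-(i+1)}(\delta(f))$, i.e.\ the next level. Given $I(s_1,\dots,s_i)\subseteq\overline{\n}^{[2]}$, one has $(s_1,\dots,s_i)\in\mathcal{S}_i$ iff $g_{i+1}(s_1,\dots,s_i,2)\notin\overline{\n}^{[2^{i+1}]}$ iff $s_{i+1}\neq 2$. So you must use the hypothesis at index $i+1$. Your induction never does this, and it is exactly what the paper uses (``since $s_{i+1}\le 1$, \dots\ $I(s_1,\dots,s_i,2)\not\subseteq\overline{\n}^{[p]}$''). This is not cosmetic: if $s_i=1$ and $s_{i+1}=2$ then $\mathcal{S}_i=\emptyset$. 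Your appeal to Proposition~\ref{prop set of sequences}(2) presupposes $\mathcal{S}_i\neq\emptyset$, which is precisely the missing membership.

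In Part (2), you flag the step $s_1=\dots=s_{e-1}=1$ as the main obstacle, and it is not proved; the proposed mechanism also fails. If $s_j=0$, the level-$j$ summand of $I(s_1,\dots,s_{k-1},2)$ is $g_j(s_1,\dots,s_j)$, with $f^2$ in the last slot, and it does lie in $\overline{\n}^{[2^j]}$. The element that failed, $g_j(s_1,\dots,s_{j-1},1)$, is not a summand.

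Two ingredients are missing:
\begin{enumerate}
\item The identity $s_k=2\iff(s_1,\dots,s_{k-1})\notin\mathcal{S}_{k-1}$ when $s_1,\dots,s_{k-1}\le 1$. This is the $l=2$ case of the corrected dictionary.
\item Nonemptiness. If $j<e$ is the first index with $s_j=0$, then $f\delta(f)^{2^{j-1}-1}\notin\overline{\n}^{[2^j]}$. The Frobenius-iteration argument in the proof of Proposition~\ref{prop set of sequences} (periodic extension of $(1,\dots,1,0)$) then gives $\mathcal{S}_k\neq\emptyset$ for all $k$.
\end{enumerate}
With these, your reduction to the last coordinate goes through for $k\le e-1$. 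It yields $(s_1,\dots,s_{e-1})=\min\mathcal{S}_{e-1}\in\mathcal{S}_{e-1}$, contradicting $s_e=2$. This is what the paper means by ``a similar argument and Proposition~\ref{prop set of sequences}''. Once $s_1=\dots=s_{e-1}=1$ is known, your computation $g_i(1,\dots,1,2)=\delta(f)^{2^{i-1}-1}$ finishes (2) correctly.
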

\begin{proof}
For (1), firstly, suppose that $s_i=0$. Then $I(s_1,\dots,s_{i-1},1)\not\subseteq \overline{\n}^{[p]}$ by the definition of splitting-order sequences. Hence, by lemma \ref{lemma trace}, we have
\[
	\prod_{j=1}^{i}f^{(1-s_j)2^{i+1-j}}(\delta(f))^{s_j2^{i-j}}\not \in \overline{\n}^{[2^{i+1}]}.
\]
Hence, $(s_1,\dots,s_i)\in \mathcal{S}_i$.
Next, suppose that $s_i=1$. Since $s_{i+1}\le 1$, we have $I(s_1,\dots,s_i) \subseteq \overline{\n}^{[p]}$ and $I(s_1,\dots,s_{i},2)\not \subseteq \overline{\n}^{[p]}$ by the definition of splitting-order sequences. By lemma \ref{lemma trace}, we have
\[
	f^2\prod_{j=1}^{i-1}f^{(1-s_j)2^{i+1-j}}(\delta(f))^{s_j2^{i-j}} \in \overline{\n}^{[2^{i+1}]}
\]
and
\[
	\prod_{j=1}^{i}f^{(1-s_j)2^{i+1-j}}(\delta(f))^{s_j2^{i-j}} \not\in \overline{\n}^{[2^{i+1}]}.
\]
Hence, $(s_1,\dots,s_{i-1},0)\not \in \mathcal{S}_{i}$ and $(s_1,\dots,s_{i})\in \mathcal{S}_{i}$, which shows (1). For (2), by a similar argument and Proposition \ref{prop set of sequences},  it follows that $s_1=s_2=\dots=s_{e-1}=1$ and $s_e=2$. By the definition of splitting-order sequences, we have $I(s_1,\dots,s_{i-1},2)\not\subseteq \overline{\n}^{[p]}$ for $i<e$ and $I(s_1,\dots,s_{e-1},2)\subseteq \overline{\n}^{[p]}$. This implies that $(\delta(f))^{2^{i-1}-1}\not \in \overline{\n}^{[2^{i}]}$ for $i<e$ and $(\delta(f))^{2^{e-1}-1}\in \overline{\n}^{[2^e]}$.
\end{proof}

The following theorem is a refinement of \cite[Theorem B]{Yoshikawa25b} in mixed characteristic (0,2).
\begin{thm} \label{theorem splitting-order sequences and ppt}
	Let the notation be as in Setting \ref{setting ppt p=2 b} and $\mathbf{s}(f)=(s_0,s_1,\dots)$ be the splitting-order sequence of $f$.
\begin{enumerate}
	\item If $s_i \le 1$ for any $i\ge 1$, then $\operatorname{ppt}(f)= 1$.
	\item Suppose that there exists $i\ge 1$ such that $s_i=2$. Let $e\ge 1$ be the minimum integer $i$ such that $s_i=2$. Then
	\[
		1-\frac{1}{2^{e-2}}< \ppt(f)\le 1-\frac{1}{2^{e}}.
	\]
	Moreover, if $f(\delta(f))^{2^{e-1}-2} \in \overline{\n}^{[2^e]}$, then 
	\[
		1-\frac{1}{2^{e-2}}< \ppt(f) \le 1-\frac{1}{2^{e-1}}.
	\]
	\end{enumerate}
\end{thm}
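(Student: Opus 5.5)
Part (1) and part (2) are both assembled from the preceding propositions. We translate the splitting-order data into the ideal-membership conditions used in Propositions \ref{quasi-F-split case}, \ref{prop perfectoid pure not quasi-F-split} and \ref{ppt splitting-order sequences}.

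\emph{Part (1).} If $s_i\le 1$ for all $i$, then by Proposition \ref{prop splitting-order sequences}(1) every $\mathcal{S}_i$ is nonempty, and $(s_1,\dots,s_i)=\min\mathcal{S}_i$. We then rerun the claim in the proof of Proposition \ref{quasi-F-split case} verbatim with this infinite sequence $(s_1,s_2,\dots)$. That claim only uses that the truncations are the lexicographic minima of $\mathcal{S}_i$, together with Lemma \ref{p-th root formula for ppt}. Minimality is what kills the error term $2^{s^{(i-1)}}2^{1/2^i+1/2^{i+1}}\tau_{s^{(i-1)}}\phi^{-(i+1)}(f)\beta_i$ modulo the relevant ideal. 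The argument yields $f^{1/2+\dots+1/2^i}\notin \n S^+$ for all $i$, hence $\ppt(f)\ge 1$. Conversely $\ppt(f)\le 1$, since $f\in\n$ gives $f^{1}\in \n S^+$. Hence $\ppt(f)=1$.

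\emph{Lower bound in part (2).} Let $e$ be minimal with $s_e=2$. By Proposition \ref{prop splitting-order sequences}(2), $\delta(f)^{2^{i-1}-1}\notin\overline{\n}^{[2^i]}$ for $i<e$. Reindexing, $\delta(f)^{2^i-1}\notin\overline{\n}^{[2^{i+1}]}$ for $1\le i\le e-2$. Proposition \ref{prop perfectoid pure not quasi-F-split}, applied with $e-2$ in place of $e$, then gives $\ppt(f)>1-1/2^{e-2}$. This is trivial when $e\le 2$.

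\emph{Upper bounds in part (2).} If $f\delta(f)^{2^{i-1}-1}\notin\overline{\n}^{[2^i]}$ for some $i$, then Proposition \ref{quasi-F-split case} would give $\ppt(f)=1$. We must rule this out using $s_e=2$. In the splitting-order recursion, $s_j=1$ for $j<e$, and by Lemma \ref{lemma trace} the condition $I(1,\dots,1,s)\subseteq\overline{\n}^{[2]}$ unwinds to $f^{2-s}\delta(f)^{2^{j-1}-1}$-type memberships in $\overline{\n}^{[2^j]}$. Hence $s_e=2$ forces $I(1,\dots,1,1)\subseteq\overline{\n}^{[2]}$ at every level, i.e.\ $f\delta(f)^{2^{i-1}-1}\in\overline{\n}^{[2^i]}$ for all $i\le e$. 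For $i>e$ it then follows because the sequence stays at $2$. We are therefore in the hypotheses of Proposition \ref{ppt splitting-order sequences}.

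We then identify its index $e'$, the minimal $i$ with $\delta(f)^{2^i-1}\in\overline{\n}^{[2^{i+1}]}$ and $f\delta(f)^{2^i-2}\in\overline{\n}^{[2^{i+1}]}$. By Proposition \ref{prop splitting-order sequences}(2), $\delta(f)^{2^{e-1}-1}\in\overline{\n}^{[2^e]}$. Raising to a square and multiplying by $\delta(f)$ gives $\delta(f)^{2^e-1}\in\overline{\n}^{[2^{e+1}]}$. Similarly $f\delta(f)^{2^e-2}=f\cdot(\delta(f)^{2^{e-1}-1})^2\in\overline{\n}^{[2^{e+1}]}$. So $e'\le e$, and Proposition \ref{ppt splitting-order sequences} gives that $(S,f^{1-1/2^{e'}})$ is not $+$-regular. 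Hence $\ppt(f)\le 1-1/2^{e'}\le 1-1/2^e$.

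Under the extra hypothesis $f\delta(f)^{2^{e-1}-2}\in\overline{\n}^{[2^e]}$, both conditions hold already at $i=e-1$, using $\delta(f)^{2^{e-1}-1}\in\overline{\n}^{[2^e]}$. So $e'\le e-1$ and $\ppt(f)\le 1-1/2^{e-1}$.

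\emph{Main obstacle.} The delicate point is the translation via Lemma \ref{lemma trace} between the nested ideals $I(1,\dots,1,s)$ and the Frobenius-power memberships, in particular verifying that $f\delta(f)^{2^{i-1}-1}\in\overline{\n}^{[2^i]}$ holds for all $i$, including $i>e$. I would handle this by iterating Lemma \ref{lemma trace} as in the proof of Proposition \ref{prop splitting-order sequences}. For $i>e$ I would argue as follows: if it failed at some larger $i$, then the Frobenius-splitting composition argument of Proposition \ref{prop set of sequences} would produce an element of $\mathcal{S}$ lexicographically below $(1,\dots,1)$ at level $e$, contradicting the structure forced by $s_e=2$.
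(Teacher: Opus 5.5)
Your proposal is essentially the paper's proof. Part (1) goes through Proposition \ref{prop splitting-order sequences}(1) and the claim underlying Propositions \ref{quasi-F-split case} and \ref{prop perfectoid pure not quasi-F-split}; the paper merely splits into the cases ``first $s_e=0$'' and ``all $s_i=1$''. Part (2) goes through Propositions \ref{prop perfectoid pure not quasi-F-split} and \ref{ppt splitting-order sequences} using $\delta(f)^{2^{e-1}-1}\in\overline{\n}^{[2^e]}$. Your explicit check of the standing hypothesis $f\delta(f)^{2^{i-1}-1}\in\overline{\n}^{[2^i]}$ for all $i$ is a welcome addition, since the paper leaves it implicit.

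Only your treatment of $i>e$ needs repair. ``The sequence stays at $2$'' is just a convention, and the composition argument produces no lexicographically smaller element. Instead, for $i\ge e$ use
\[
\delta(f)^{2^{i-1}-1}=\bigl(\delta(f)^{2^{e-1}-1}\bigr)^{2^{i-e}}\delta(f)^{2^{i-e}-1}\in\overline{\n}^{[2^i]},
\]
which is the same Frobenius-power trick from your next paragraph. Also note that $e\ge 2$ automatically, since $I(2)=\overline{S}$; so $i=e-1$ is a legitimate index in your final step.
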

\begin{proof}
	For (1), suppose that $s_i(f)\le 1$ for any $i\ge 1$. First, assume that $s_1=\dots=s_{e-1}=1$ and $s_e=0$. Then, by Proposition \ref{prop splitting-order sequences}, $(s_1,\dots,s_e)=\min \mathcal{S}_e$. Hence, $f\delta(f)^{2^{e-1}-1}\not \in \overline{\n}^{[2^e]}$. By Proposition \ref{quasi-F-split case}, $\ppt(f)=1$.
	Next, assume that $s_1=s_2=\dots=1$. By the proof of Proposition \ref{prop splitting-order sequences}, $(\delta(f))^{2^i-1}\not \in \overline{\n}^{[2^{i+1}]}$ for any $i\ge 1$. Hence, $\ppt(f)=1$ by Proposition \ref{prop perfectoid pure not quasi-F-split}.
	For (2), let $e$ be the minimum integer $i$ such that $s_i=2$. By Proposition \ref{prop splitting-order sequences}, $(\delta(f))^{2^{i-1}-1}\not \in \overline{\n}^{[2^i]}$ for $i < e$ and $(\delta(f))^{2^{e-1}-1}\in \overline{\n}^{[2^e]}$. Hence, by Proposition \ref{prop perfectoid pure not quasi-F-split}, we have
\[
	\ppt(f) > 1-\frac{1}{2^{e-2}}.
\]
Since $(\delta(f))^{2^{e-1}-1}\in \overline{\n}^{[2^e]}$, we see that $(\delta(f))^{2^{e}-2}\in \overline{\n}^{[2^{e+1}]}$. By Proposition \ref{ppt splitting-order sequences}, we have
\[
	\operatorname{ppt}(f)\le 1-\frac{1}{2^{e}}.
\]
Moreover, if $f(\delta(f))^{2^{e-1}-2}\in \overline{\n}^{[2^e]}$, then, by Proposition \ref{ppt splitting-order sequences},
\[
	\operatorname{ppt}(f)\le 1-\frac{1}{2^{e-1}}.
\]
\end{proof}

\begin{eg}
	In the above theorem, the condition $f(\delta(f))^{2^{e-1}-1}\in \overline{\n}^{[2^e]}$ is necessary. Indeed, let $S:=\Z_2\llbracket x, y \rrbracket$, $\n=(2,x,y)S$, $f=x^2+y^3$ and $g=x^3+y^3$. Then $\mathbf{s}(f)=\mathbf{s}(g)=(0,1,1,2,\dots)$. On the other hand,
$f(\delta(f))^2=(x^2+y^3)x^4y^6 \not \in \overline{\n}^{[2^3]}$, but 
$g(\delta(g))^2=(x^3+y^3)x^6y^6 \in \overline{\n}^{[2^3]}$. By Corollary \ref{binomial ppt}, we see that $\operatorname{ppt}(f)=5/6$ and $\operatorname{ppt}(g)=2/3$. Here
\[
	1-\frac{1}{2}<\operatorname{ppt}(g)\le 1-\frac{1}{4},
\]
but $\operatorname{ppt}(f)> 3/4$.
\end{eg}

\subsection{BCM-regularity of diagonal hypersurfaces in mixed characteristic (0,2)}
In this subsection, as an application of the above theorems, we determine which diagonal hypersurfaces are BCM-regular in mixed characteristic (0,2).
\begin{setting} \label{setting p=2}
	Let $n\ge 2$, $2\le d_0\le \dots \le d_n$ and $\mathbf{d}=(d_0,\dots,d_n)$. Let $f_{\mathbf{d}}:=x_0^{d_0}+\dots +x_n^{d_n}$ and $R_\mathbf{d}:=\Z_{2}\llbracket x_0,\dots, x_n\rrbracket/(f_{\mathbf{d}})$. 
\end{setting}

\begin{lem} \label{lemma classification}
	Let the notation be as in Setting \ref{setting p=2}.
\begin{enumerate}
\item The ring $R_{\mathbf{d}}$ is BCM-regular if one of the following conditions holds.
\begin{enumerate}[label=(\roman*)]
	\item $\mathbf{d}=(2,2,m)$ and $m\ge 2$.
	\item $\mathbf{d}=(2,3,5)$.
	\item $\mathbf{d}=(2,3,6,m)$ and $m\ge 6$.
	\item $\mathbf{d}=(2,3,7,31)$.
	\item $\mathbf{d}=(3,3,3,m)$ and $m\ge 3$.
	\item $\mathbf{d}=(3,3,6,6,m)$ and $m\ge 6$.
	\item $\mathbf{d}=(3,3,5,7)$.
	\item $\mathbf{d}=(3,3,6,7,31)$.
	\item $\mathbf{d}=(3,3,7,7,15)$.
	\item $\mathbf{d}=(3,3,7,7,31,31)$.
\end{enumerate}
	\item The ring $R_{\mathbf{d}}$ is not perfectoid pure if one of the following conditions holds.
	\begin{enumerate}[label=(\roman*)]
		\item $(d_0,d_1)=(2,3)$ and $d_2\ge 8$.
		\item $(d_0,d_1,d_2)=(2,3,7)$ and $d_3\ge 32$.
		\item $(d_0,d_1,d_2)=(3,3,4)$ and $d_3\ge 8$.
		\item $\mathbf{d}=(3,3,7,7,16)$.
		\item $(d_0,d_1,d_2,d_3,d_4)=(3,3,7,7,16)$ and $d_5\ge 32$.
		\item $(d_0,d_1,d_2,d_3)=(3,3,6,7)$ and $d_4\ge 32$.
	\end{enumerate}
\end{enumerate}
\end{lem}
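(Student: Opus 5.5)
Both parts reduce to one-variable-removed statements about $S=\Z_2\llbracket x_1,\dots,x_n\rrbracket$ and $f=x_1^{d_1}+\dots+x_n^{d_n}$, with $x_0^{d_0}$ split off. For (1), Proposition \ref{BCM-regularity and brt} says $R_{\mathbf d}$ is BCM-regular if and only if $\ppt(S,f)>(d_0-1)/d_0$. In every listed case $d_0\in\{2,3\}$, so we need $\ppt(f)>1/2$ when $d_0=2$ and $\ppt(f)>2/3$ when $d_0=3$.

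For perfectoid purity in (2), I would use the analogous splitting criterion for $x_0^{d_0}+f$ phrased through $p$-th roots. In the $d_0=2$ case this is exactly \cite[Theorem B]{Yoshikawa25b}, which the paper refines: perfectoid purity is read off from the splitting-order sequence $\mathbf{s}(f_{\mathbf d})$. Alternatively, Proposition \ref{ppt splitting-order sequences} produces $f^{t}\in\n S^+$, which is the failure we need.

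\textbf{Positive cases (1).} For each listed $\mathbf d$ I would compute the relevant quantities in $\overline S=\mathbb F_2\llbracket x_1,\dots,x_n\rrbracket$. These are the membership of $f\delta(f)^{2^{i-1}-1}$ and of $\delta(f)^{2^{i}-1}$ in $\overline{\n}^{[2^{i}]}$, resp.\ $\overline{\n}^{[2^{i+1}]}$. Since $f$ is diagonal, $\delta(f)=\sum_{i<j}x_i^{d_i}x_j^{d_j}$ modulo $2$ plus higher terms. So these are monomial-counting problems: a term survives iff some multinomial coefficient is odd (Lucas) and every exponent stays below $2^{e}$.

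When some $f\delta(f)^{2^{e-1}-1}\notin\overline{\n}^{[2^e]}$, Proposition \ref{quasi-F-split case} gives $\ppt(f)=1$, which suffices. Otherwise, if $\delta(f)^{2^i-1}\notin\overline{\n}^{[2^{i+1}]}$ for $i\le e$, Proposition \ref{prop perfectoid pure not quasi-F-split} gives $\ppt(f)>1-1/2^e$, and $e=1$ resp.\ $e=2$ already exceeds $1/2$ resp.\ $2/3$.

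Where these criteria are inconvenient, I would use Theorem \ref{diagonal hypersurface brt} with $\alpha_i\le 1/d_i$ chosen so that the binary expansions have at most one carry per digit and $\sum\alpha_i>(d_0-1)/d_0$. For example, for $(2,3,6,m)$ take $\alpha=(1/3,1/6,\varepsilon)$ as in the $x^2+y^3+z^6$ example, and for $(2,2,m)$ use Corollary \ref{binomial ppt}-type bounds. Families in $m$, such as $(2,3,6,m)$ and $(3,3,6,6,m)$, follow from the smallest $m$ allowed by monotonicity, Proposition \ref{proposition index inequality}, applied in reverse: large exponents reduce to computing the limit case in which $x_n^{m}$ contributes nothing below the relevant Frobenius power, so the computation is independent of $m$.

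\textbf{Negative cases (2).} First reduce with Proposition \ref{proposition index inequality}: it suffices to treat the minimal listed exponents, after noting the perfectoid-pure analogue of monotonicity (same proof with $\sigma'$). Then show $\mathbf{s}(f_{\mathbf d})$ reaches $p=2$ at a step making the ring non-perfectoid-pure, following Proposition \ref{ppt splitting-order sequences} and Theorem \ref{theorem splitting-order sequences and ppt}(2). Concretely, one checks $\delta(f)^{2^e-1}$ and $f\delta(f)^{2^e-2}$ lie in $\overline{\n}^{[2^{e+1}]}$ for the appropriate $e$, which is why thresholds like $8,16,32$ appear. For example, with $d_2\ge8$ the variable $x_2$ can no longer contribute below $2^3$, which kills the surviving monomial.

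\textbf{Main obstacle.} The real difficulty is the bookkeeping in cases like $(3,3,7,7,31,31)$ and $(3,3,7,7,16)$, where the decisive $e$ is $4$ or $5$. There one must certify, via Lucas' theorem, that a specific monomial of $f\delta(f)^{2^{e-1}-1}$ or $\delta(f)^{2^e-1}$ has odd coefficient with all exponents $<2^{e}$, or that none does. I would organise this by writing exponent vectors in binary and choosing the surviving monomial explicitly. If that fails for some case, I would fall back on the finer Lemma \ref{p-th root formula for ppt} expansion as in the proof of Proposition \ref{Fermat type an=p^2-a-1}.
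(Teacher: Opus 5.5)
\paragraph{The gap: the infinite families in part (1).}
This concerns in particular $(3,3,3,m)$ and $(3,3,6,6,m)$. You split off $x_0^{d_0}$ and then claim that these families follow from the smallest $m$ by Proposition \ref{proposition index inequality} ``applied in reverse'', because the computation is independent of $m$. Neither half holds.

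\begin{itemize}
\item Proposition \ref{proposition index inequality} only passes BCM-regularity from larger exponents to smaller ones, and the converse is false. For example, $R_{(2,3,5)}$ is BCM-regular while $R_{(2,3,6)}$ is not even $+$-regular (Corollary \ref{+-regular and canonical diagonal hypersurface}).
\item Once $x_0^{d_0}$ is removed, the $m$-independent limit lies exactly on the bound you must beat strictly. Corollary \ref{binomial ppt} gives $\ppt(x_1^3+x_2^3)=2/3$. Similarly $\ppt(x_1^3+x_2^6+x_3^6)=2/3$: the lower bound comes from Theorem \ref{diagonal hypersurface brt}, the upper bound from the klt/lct argument in the proof of Corollary \ref{binomial ppt}.
\item That same argument gives $\ppt(x_1^3+x_2^3+x_3^m)\le 2/3+1/m$. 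So the margin shrinks with $m$ and must come from an $m$-dependent contribution of $x_3^m$.
\item Your Frobenius-power criteria cannot supply that contribution. For $m\ge 4$ these $f$ have lct $<1$, so Proposition \ref{quasi-F-split case} never applies, and Proposition \ref{prop perfectoid pure not quasi-F-split} gives at most $\ppt(f)>1/2$.
\end{itemize}

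\paragraph{How to fix it.}
The paper instead splits off the variable of largest exponent $m$. Proposition \ref{BCM-regularity and brt} then only requires $\ppt(f_{(2,2)})=\ppt(f_{(2,3,6)})=\ppt(f_{(3,3,3)})=\ppt(f_{(3,3,6,6)})=1$. This follows from Theorem \ref{diagonal hypersurface brt} with $\alpha_i=1/d_i$, each of which has at most one carry, and $1>(m-1)/m$ for every $m$.

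If you keep $x_0^{d_0}$ split off, you must place the binary digits of $\varepsilon\le 1/m$ with care.
\begin{itemize}
\item Your choice $(1/3,1/6,\varepsilon)$ for $(2,3,6,m)$ works for every $\varepsilon$, because $1/3$ and $1/6$ have disjoint binary supports.
\item For $(3,3,3,m)$, the two copies of $1/3$ already give digit sum $2$ at every even position. So the natural choice $\varepsilon=2^{-k}$ forces carries equal to $2$, and Theorem \ref{diagonal hypersurface brt} no longer applies. Taking $\varepsilon$ supported on odd digits works.
\item For $(3,3,6,6,m)$, take $\varepsilon$ supported on even digits instead.
\end{itemize}

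The finite cases (vii)--(x) need the same care. In your normalization they rest entirely on explicit carry-compatible $\alpha_i$, which you do not give. For instance, $(1/3,1/5,1/7)$ works for $(3,3,5,7)$. But $(1/3,1/7,1/7,1/15)$ has digit sum $4$ at position $12$, so it fails for $(3,3,7,7,15)$.

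\paragraph{Part (2).}
This agrees with the paper, provided you work with the full $f_{\mathbf d}$ as you eventually do. Every $x_i^{d_i}$ with $d_i\ge 2^{e+1}$ drops out of $\delta(f_{\mathbf d})=-\sum_{i<j}x_i^{d_i}x_j^{d_j}$ modulo $\overline{\n}^{[2^{e+1}]}$. This handles all $n$ at once, so your monotonicity reduction there is unnecessary.
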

\begin{proof}
	(1)(i) By Corollary \ref{binomial ppt}, $\ppt(f_{(2,2)})=1$. Hence, $R_{(2,2,m)}$ is BCM-regular for any $m\ge 1$ by Proposition \ref{BCM-regularity and brt}.

(ii) By Corollary \ref{binomial ppt}, $\ppt(f_{(2,3)})=5/6$. Hence, by Proposition \ref{BCM-regularity and brt}, $R_{(2,3,5)}$ is BCM-regular.

(iii) Since we can take $\alpha_1=1/2$, $\alpha_2=1/3$ and $\alpha_{3}=1/6$ in Theorem \ref{diagonal hypersurface brt}, it follows that $\ppt(f_{(2,3,6)})=1$. Hence, $R_{(2,3,6,m)}$ is BCM-regular for any $m\ge 1$ by Proposition \ref{BCM-regularity and brt}.

(iv) Similarly, 
	\[
		\ppt(f_{(3,7,31)})=\frac{1}{3}+\frac{1}{7}+\frac{1}{31}=\frac{331}{651}>\frac{1}{2}.
	\]
	Hence, $R_{(2,3,7,31)}$ is BCM-regular.

(v) Similarly, $\ppt(f_{(3,3,3)})=1$. Hence, $R_{(3,3,3,m)}$ is BCM-regular for any $m\ge 3$.

(vi) Similarly, $\ppt(f_{(3,3,6,6)})=1$. Hence, $R_{(3,3,6,6,m)}$ is BCM-regular for any $m\ge 6$.

(vii) Similarly, 
\[
	\ppt(f_{(3,3,5)})=\frac{1}{3}+\frac{1}{3}+\frac{1}{5}=\frac{13}{15}>\frac{6}{7}.
\]
Hence, $R_{(3,3,5,7)}$ is BCM-regular.

(viii) For $f_{(3,3,6,7)}$, we can take $\alpha_1=1/3$, $\alpha_2=1/3$, $\alpha_4=1/7$ and
\[
	\alpha_3=\frac{1}{2^3}+\sum_{i=1}^{\infty} \left(\frac{1}{2^{6i-1}}+\frac{1}{2^{6i+3}}\right)=\frac{10}{63}
\]
in Theorem \ref{diagonal hypersurface brt}.
Hence, 
\[
	\ppt(f_{(3,3,6,7)})\ge \frac{1}{3}+\frac{1}{3}+\frac{10}{63}+\frac{1}{7}=\frac{61}{63}>\frac{30}{31}.
\]
Therefore, $R_{(3,3,6,7,31)}$ is BCM-regular.

(ix) For $f_{(3,3,7,7)}$, we can take $\alpha_1=1/3$, $\alpha_2=1/3$, $\alpha_3=1/7$ and
\[
	\alpha_4=\sum_{i=1}^\infty \frac{1}{2^{6i-3}}= \frac{8}{63}
\]
in Theorem \ref{diagonal hypersurface brt}. Hence,
\[
	\ppt(f_{(3,3,7,7)}) \ge \frac{1}{3}+\frac{1}{3}+\frac{1}{7}+\frac{8}{63}=\frac{59}{63}>\frac{14}{15}.
\]
Therefore, $R_{(3,3,7,7,15)}$ is BCM-regular.

(x) For $f_{(3,3,7,7,31)}$, we can take $\alpha_1=1/3$, $\alpha_2=1/3$, $\alpha_3=1/7$,
\[
	\alpha_4=\sum_{i=1}^\infty \frac{1}{2^{6i-3}}=\frac{8}{63}
\]
and
\[
	\alpha_5=\sum_{i=0}^\infty \frac{1}{2^{30i+5}}=\frac{2^{25}}{2^{30}-1}.
\]
Hence, 
\[
	\ppt(f_{(3,3,7,7,31)}) \ge \frac{1}{3}+\frac{1}{3}+\frac{1}{7}+\frac{8}{63}+\frac{2^{25}}{2^{30}-1}>\frac{30}{31}.
\]
Therefore, $R_{(3,3,7,7,31,31)}$ is BCM-regular.

(2)(i) Since $\delta(f_{\mathbf{d}})\equiv x_0^2x_1^3 \pmod{\overline{\n}^{[8]}}$, we have $\delta(f_{\mathbf{d}})\not \in \overline{\n}^{[4]}$, $f_{\mathbf{d}}\delta(f_{\mathbf{d}})\in \overline{\n}^{[4]}$ and $(\delta(f_{\mathbf{d}}))^3\in \overline{\n}^{[8]}$. Therefore, $\mathbf{s}(f_{\mathbf{d}})=(0,1,1,2,\dots)$. Hence, $R_{\mathbf{d}}$ is not perfectoid pure by Theorem \ref{theorem splitting-order sequences and ppt} (2).

(ii) Since $\delta(f_{\mathbf{d}}) \equiv x_0^2x_1^3+x_0^2x_2^7+x_1^3x_2^7 \pmod{\overline{\n}^{[32]}}$, we have $(\delta(f_{\mathbf{d}}))^{2^i-1}\not \in \overline{\n}^{[2^{i+1}]}$, $f_{\mathbf{d}}(\delta(f_{\mathbf{d}}))^{2^i-1} \in \overline{\n}^{[2^{i+1}]}$ for $i=1, 2, 3$ and $(\delta(f_{\mathbf{d}}))^{15} \in \overline{\n}^{[32]}$. Hence, $\mathbf{s}(f_{\mathbf{d}})=(0,1,1,1,1,2,\dots)$. Hence, $R_{\mathbf{d}}$ is not perfectoid pure by Theorem \ref{theorem splitting-order sequences and ppt} (2).

(iii) Similarly, $\mathbf{s}(f_{\mathbf{d}})=(0,1,1,2,\dots)$. Hence, $R_\mathbf{d}$ is not perfectoid pure.

(iv)(v)(vi) Similarly, $\mathbf{s}(f_{\mathbf{d}})=(0,1,1,1,1,2,\dots)$. Hence, $R_{\mathbf{d}}$ is not perfectoid pure.
\end{proof}

\begin{notation}
	Let $i, m_0, \dots, m_i\in \N$.
	We use the notation $(m_0,\dots,m_i,*)$ to denote an arbitrary element of the set
\[
\bigcup_{n=i}^{\infty}\{ (d_0,\dots,d_n)\in \N^{n+1}\mid \text{$(d_0,\dots,d_i)=(m_0,\dots,m_i)$ and $d_0\le \dots \le d_n$}\}.
\]
\end{notation}

\begin{thm} \label{theorem classification}
	Let the notation be as in Setting \ref{setting p=2}. Then $R_{\mathbf{d}}$ is $+$-regular if and only if it is BCM-regular, and this holds if and only if $\mathbf{d}$ is in the following list.
	\begin{enumerate}
		\item $(2,2,m,*)$ for some $m\ge 2$.
		\item $(2,3,m,*)$. where $3\le m \le 5$.
		\item $(2,3,6,m,*)$, where $m\ge 6$.
		\item $(2,3,7,m,*)$, where $7 \le m \le 31$.
		\item $(3,3,3,m,*)$, where $m\ge 3$.
		\item $(3,3,m_1,m_2,*)$, where $m_1\le 5$ and $m_2\le 7$
		\item $(3,3,6,6,m,*)$, where $m\ge 6$.
		\item $(3,3,6,7,m,*)$, where $7 \le m \le 31$.
		\item $(3,3,7,7,m,*)$, where $7\le m \le 15$.
		\item $(3,3,7,7,m_1,m_2,*)$, where $7\le m_1\le m_2\le 31$.
	\end{enumerate}
\end{thm}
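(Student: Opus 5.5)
The statement has two directions. One direction (\emph{sufficiency}) shows that every $\mathbf{d}$ in the list gives a BCM-regular ring. The other (\emph{necessity}) shows that every $\mathbf{d}$ outside the list gives a ring that is not even $+$-regular, and in most cases not even perfectoid pure. Both directions use the same two facts:
\[
\text{BCM-regular} \Rightarrow \text{$+$-regular} \Rightarrow \text{perfectoid pure},
\]
which hold since a BCM-regular ring is $+$-regular ($R^+$ maps to a perfectoid BCM algebra) and $+$-regular implies perfectoid pure; and monotonicity under lowering exponents, Proposition \ref{proposition index inequality}, which lets us pass from a given $\mathbf{d}$ to a coordinatewise larger or smaller one. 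I also use that adding variables preserves BCM-regularity. This follows from Proposition \ref{BCM-regularity and brt} and the fact that $\ppt$ of a sum in disjoint variables only grows when a summand is added. Concretely, I would show $f^{\gamma}\notin \n B$ implies $(f+x^m)^{\gamma}\notin$ the larger ideal, or reduce to the case $x_n$ absent via specialization $x_n\mapsto 0$.

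\emph{Sufficiency.} For each list item I exhibit a minimal exponent vector from Lemma \ref{lemma classification}(1) that dominates it coordinatewise on a prefix. Then I apply Proposition \ref{proposition index inequality}, together with the extra-variables remark, to lower the exponents. The correspondence is as follows.
\begin{itemize}
\item Item (1) is (1)(i).
\item Item (2) is dominated by $(2,3,5)$.
\item Item (3) is (1)(iii).
\item Item (4) is dominated by $(2,3,7,31)$.
\item Item (5) is (1)(v).
\item Item (6) is dominated by $(3,3,5,7)$.
\item Item (7) is (1)(vi).
\item Item (8) is dominated by $(3,3,6,7,31)$.
\item Item (9) is dominated by $(3,3,7,7,15)$.
\item Item (10) is dominated by $(3,3,7,7,31,31)$.
\end{itemize}
Since the $d_i$ are sorted, lowering an entry may break monotonicity. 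Proposition \ref{proposition index inequality} does not require sorting, however, so this causes no problem.

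\emph{Necessity.} Suppose $\mathbf{d}$ is not in the list. First apply Corollary \ref{+-regular and canonical diagonal hypersurface}: if $\sum 1/d_i\le 1$, the ring is not $+$-regular. This disposes of every case with $d_0\ge 4$. It also disposes of $(3,3,m,\ldots)$ with too few further variables, for example $(3,3,m)$ with $m\ge 3$, and of $(2,4,\ldots)$-type cases such as $(2,4,4)$. The remaining cases, which is where the case analysis happens, have $d_0\in\{2,3\}$.
\begin{itemize}
\item For $(2,2,\ldots)$, everything is listed.
\item For $(2,3,m,\ldots)$, if $m\ge 8$ use (2)(i). If $m=6$ or $m=7$ with only three variables, use the canonical criterion. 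For $(2,3,7,m)$ with $m\ge 32$, use (2)(ii).
\item For $(2,d_1,\ldots)$ with $d_1\ge 4$: if only three variables remain, use the canonical criterion when applicable. Otherwise compare with $(3,3,4,8)$-type obstructions via Proposition \ref{proposition index inequality} in the contrapositive direction, raising exponents to reach a non-perfectoid-pure case from (2).
\item For $d_0=3$, the cases are $(3,3,4,m\ge 8)$, $(3,3,5,m\ge 8)$ and $(3,3,6,m\ge 8)$, all reduced to (2)(iii) by raising $d_2$ to $4$ (only smaller exponents preserve regularity, so this contrapositive works). After that come $(3,3,6,7,m\ge 32)$, then $(3,3,7,7,16,\ldots)$ and $(3,3,7,7,m_1,m_2\ge 32)$ from (2)(iv)--(vi), and finally $(3,3,d_2\ge 8,\ldots)$ by reduction to $(3,3,4,8)$.
\end{itemize}
Since non-perfectoid-pure implies non-$+$-regular, which implies non-BCM-regular, the equivalence of $+$-regularity and BCM-regularity follows from the dichotomy.

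\emph{Main obstacle.} The hard part is making the necessity case split exhaustive. In particular, sequences with $d_1\ge 4$ or with arbitrarily many further variables (the $*$ tails) are not directly covered by Lemma \ref{lemma classification}(2). The missing tool is that adding variables cannot make a non-perfectoid-pure ring perfectoid pure, or more precisely that the splitting-order sequence obstruction persists. To handle these I would first try to show that the relevant computations of $\delta(f_{\mathbf{d}})$ modulo $\overline{\n}^{[2^{e+1}]}$ are unchanged when extra variables $x^{m}$ with $m\ge 2^{e+1}$ are added. Otherwise I would redo the computation of $\mathbf{s}(f_{\mathbf{d}})$ for the finitely many small exceptional prefixes such as $(2,4,4,4)$ and $(3,4,4,4)$, using Theorem \ref{theorem splitting-order sequences and ppt}.
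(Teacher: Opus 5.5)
\textbf{Sufficiency.} Your sufficiency half is fine and is the paper's argument: Lemma \ref{lemma classification}(1), Proposition \ref{proposition index inequality}, and stability under adding variables. Your route to the last point also works: use Proposition \ref{BCM-regularity and brt} ((1)$\Leftrightarrow$(3)) together with the monotonicity $\ppt(S\llbracket y\rrbracket,g+y^m)\ge\ppt(S,g)$, which you get by specializing $y\mapsto 0$ along a ring map $S\llbracket y\rrbracket^+\to S^+$. The paper instead cites \cite[Theorem 6.27]{Ma-Schwede21}.

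\textbf{The gap in necessity.} The genuine gap is the case $d_1\ge 4$.
\begin{itemize}
\item Your assertion that Corollary \ref{+-regular and canonical diagonal hypersurface} disposes of every case with $d_0\ge 4$ is false once there are five variables: $\mathbf d=(4,4,4,4,4)$ has $\sum 1/d_i=5/4>1$.
\item Likewise $(2,4,4,4,\dots)$ and $(3,4,4,4,\dots)$ have $\sum 1/d_i>1$, so the corollary says nothing about them.
\item Proposition \ref{proposition index inequality} cannot reach these from Lemma \ref{lemma classification}(2) either. Every vector listed there has an entry $\ge 8$, so none lies coordinatewise below a vector all of whose entries are $\le 7$.
\item Your fallback (recomputing $\mathbf s(f_{\mathbf d})$ for a few prefixes such as $(2,4,4,4)$ and $(3,4,4,4)$) is left undone. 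As formulated, it also omits the infinitely many cases with $d_0\ge 4$, which you regarded as already settled.
\end{itemize}

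\textbf{The missing idea.} What is missing is the genuinely mixed-characteristic obstruction $d\ge p^2$, which the paper uses via the proof of Proposition \ref{Fermat type >=p^2}. Suppose $d_1\ge 4$ and put $g:=x_1^{d_1}+\dots+x_n^{d_n}$.
\begin{itemize}
\item Since all exponents are $\ge 4=p^2$, the elements $\phi^{-1}(g)$, $\phi^{-2}(g)$ and $\phi^{-2}(\delta(g))$ lie in $(x_1,\dots,x_n)S^+$.
\item Proposition \ref{remark p-root formula} then gives $g^{1/2}\in(2,x_1,\dots,x_n)S^+$.
\item Since $(d_0-1)/d_0\ge 1/2$, it follows that $g^{(d_0-1)/d_0}\in(2,x_1,\dots,x_n)S^+$.
\item Hence $R_{\mathbf d}$ is not $+$-regular by Remark \ref{remark +-regular and ppt}, uniformly in $n$.
\end{itemize}
Equivalently, in your splitting-order language: $\delta(f_{\mathbf d})\equiv\sum_{i<j}x_i^{d_i}x_j^{d_j}\in\overline{\n}^{[4]}$, so $\mathbf s(f_{\mathbf d})=(0,1,2,\dots)$, and Theorem \ref{theorem splitting-order sequences and ppt}(2) gives $\ppt(f_{\mathbf d})\le 3/4$. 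This forces $(d_0,d_1)\in\{(2,2),(2,3),(3,3)\}$, after which your case analysis goes through.

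\textbf{The tails.} Your other worry, the $*$-tails, is not an obstacle. Lemma \ref{lemma classification}(2) is already stated for arbitrary tails. Its computations take place modulo $\overline{\n}^{[8]}$ or $\overline{\n}^{[32]}$, and the tail exponents are at least $8$, respectively $32$, so they do not affect those computations.
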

\begin{proof}
	In this proof, we freely use Proposition \ref{proposition index inequality} and \cite[Theorem 6.27]{Ma-Schwede21}.
	If $d_1\ge 4$, then $R$ is not $+$-regular by the proof of Proposition \ref{Fermat type >=p^2}. Hence, $(d_0,d_1)=(2,2), (2,3), (3,3)$ if $R$ is $+$-regular. 

 First, suppose that $(d_0,d_1)=(2,2)$. Then $R$ is BCM-regular by Lemma \ref{lemma classification} (1)(i). 

 Next, suppose that $(d_0,d_1)=(2,3)$. If $d_2\le 5$, then $R$ is BCM-regular by Lemma \ref{lemma classification} (1)(ii). If $n=2$ and $d_2\ge 6$, then $R$ is not $+$-regular by Corollary \ref{+-regular and canonical diagonal hypersurface}. Suppose that $n\ge 3$. If $d_2\ge 8$, then $R$ is not perfectoid pure by Lemma \ref{lemma classification} (2)(i). If $d_2=6$, then $R$ is BCM-regular by Lemma \ref{lemma classification} (iii). Suppose that $d_2=7$. In this case, $R$ is not perfectoid pure if $d_3\ge 32$ by Lemma \ref{lemma classification} (2)(ii), and $R$ is BCM-regular if $d_3\le 31$ by Lemma \ref{lemma classification} (1)(iv).

 Lastly, suppose that $(d_0,d_1)=(3,3)$. If $n=2$, then $R$ is not $+$-regular by Corollary \ref{+-regular and canonical diagonal hypersurface}. Suppose that $n\ge 3$. If $d_2=3$, then $R$ is BCM-regular by Lemma \ref{lemma classification} (1)(v). Suppose that $d_2\ge 4$. By Lemma \ref{lemma classification} (2)(iii), $R$ is not $+$-regular if $d_3\ge 8$. If $d_2\le 5, d_3\le 7$, then $R$ is BCM-regular by Lemma \ref{lemma classification} (1)(vii). If $n=3$ and $d_2\ge 6$, then $R$ is not $+$-regular by Corollary \ref{+-regular and canonical diagonal hypersurface}. Hence, we may assume that $n\ge 4$. The only remaining cases are $(d_2,d_3)=(6,6), (6,7)$ and $(7,7)$.

If $(d_2,d_3)=(6,6)$, then $R$ is BCM-regular by Lemma \ref{lemma classification} (1)(vi). If $(d_2, d_3)=(6,7)$, then $R$ is BCM-regular if and only if $d_4\le 31$ by Lemma \ref{lemma classification} (1)(viii) and (2)(vi). If $(d_2,d_3)=(7,7)$ and $n=4$, then $R$ is BCM-regular if and only if $d_4\le 15$ by Lemma \ref{lemma classification} (1)(ix) and (2)(iv). If $(d_2,d_3)=(7,7)$, $d_4\ge 16$ and $n\ge 5$, then $R$ is BCM-regular if and only if $d_4\le d_5\le 31$ by Lemma \ref{lemma classification} (1)(x) and (2)(v). In the above cases, $R$ is BCM-regular if and only if $R$ is $+$-regular. Summarizing this argument, we obtain the desired list.
\end{proof}
\begin{cor} \label{cor classification}
	Let $R=\Z_{2}\llbracket x_0,\dots, x_n\rrbracket/(x_0^{d_0}+\dots +x_n^{d_n})$. Suppose that $n\ge 1$ and $2\le d_0\le \dots \le d_n$. Then $R$ is perfectoid pure, but not $+$-regular if and only if $\mathbf{d}$ is equal to one of the following.
\begin{enumerate}
	\item (2,2).
	\item (2,3,6).
	\item (3,3,3).
	\item (3,3,6,6).
\end{enumerate}
\end{cor}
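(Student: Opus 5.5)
The plan is to exploit the proof of Theorem \ref{theorem classification}. Every branch of that case analysis ends in one of three verdicts: BCM-regular, not perfectoid pure (via Lemma \ref{lemma classification} (2) or Theorem \ref{theorem splitting-order sequences and ppt} (2)), or not $+$-regular merely via Corollary \ref{+-regular and canonical diagonal hypersurface}. The candidates for ``perfectoid pure but not $+$-regular'' are therefore exactly the tuples reaching the third verdict, together with the cases $n=1$ and $d_1\ge 4$, which the theorem treats separately. For these remaining tuples I will decide perfectoid purity directly, using two inputs.

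\emph{Input (a), an upper bound on perfectoid purity.} Perfectoid purity of $R_{\mathbf d}$ implies that the characteristic-zero fibre $(\C[x_0,\dots,x_n]/(f_{\mathbf d}))_{(x)}$ is log canonical. I will argue this as in Lemma \ref{+-regular and log terminal}, using the corresponding statement of \cite{BMPSTWW24} for perfectoid pure rings in place of the multiplier-ideal statement. By Reid's criterion this forces $\sum 1/d_i\ge 1$.

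\emph{Input (b), a lower bound.} If $s_i\le 1$ for all $i$, then $R_{\mathbf d}$ is perfectoid pure. This is Yoshikawa's criterion \cite{Yoshikawa25b}, the same one used in Lemma \ref{lemma classification} (2) in the contrapositive direction.

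\emph{The case $d_1\ge 4$.} Here $\delta(f_{\mathbf d})\equiv\sum_{j<k}x_j^{d_j}x_k^{d_k}\pmod 2$. Every monomial in this sum contains some $x_k^{d_k}$ with $k\ge 1$, and $d_k\ge d_1\ge 4$, so $\delta(f_{\mathbf d})\in\overline{\n}^{[4]}$. Hence $s_1=2$, and $R_{\mathbf d}$ is not perfectoid pure, exactly as in Lemma \ref{lemma classification} (2). Therefore $(d_0,d_1)\in\{(2,2),(2,3),(3,3)\}$.

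\emph{The case $n=1$.} Input (a) leaves only $(2,2)$ among these pairs, since $(2,3)$ and $(3,3)$ have $\sum 1/d_i<1$.

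\emph{The case $n\ge 2$.} I rerun the branches of the proof of Theorem \ref{theorem classification}. The branches ending in BCM-regularity or in Lemma \ref{lemma classification} (2) are excluded. The others are $n=2$ with $(2,3,d_2)$, $d_2\ge 6$, or $(3,3,d_2)$, and $n=3$ with $(3,3,d_2,d_3)$, $d_2\ge 6$, $d_3\le 7$. Input (a) cuts these down to $\sum 1/d_i=1$. Since $d_2\ge 6$ in the last subfamily and $d_3\le 7$, the only solutions are $(2,3,6)$, $(3,3,3)$ and $(3,3,6,6)$.

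For the converse, let $\mathbf d$ be one of the four tuples. It is not $+$-regular by Corollary \ref{+-regular and canonical diagonal hypersurface}, since $\sum 1/d_i=1$. For perfectoid purity, I show $\mathbf s(f_{\mathbf d})$ has no entry $2$ and then invoke input (b). By Theorem \ref{theorem splitting-order sequences and ppt} (2), an entry $2$ would force $\ppt(f_{\mathbf d})<1$. On the other hand, $\ppt(f_{\mathbf d})=1$ for all four tuples. For $(2,2)$ this is Corollary \ref{binomial ppt}. For $(2,3,6)$, $(3,3,3)$ and $(3,3,6,6)$ it is Theorem \ref{diagonal hypersurface brt}, applied with $\alpha_i=1/d_i$ as already done in Lemma \ref{lemma classification} (1)(iii),(v),(vi). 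Thus $s_i\le 1$ for all $i$, and $R_{\mathbf d}$ is perfectoid pure. Alternatively, one could verify $s_i\le 1$ by a direct computation of $\delta(f_{\mathbf d})$ modulo Frobenius powers, in the style of Lemma \ref{lemma classification} (2).

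The main obstacle is justifying inputs (a) and (b) precisely. For (a), the descent to the complete local ring and the localization steps must be carried out as in Lemma \ref{+-regular and log terminal}. For (b), I must confirm that Yoshikawa's theorem in characteristic $2$ gives exactly ``perfectoid pure iff $s_i\neq 2$ for all $i$''. The remaining case bookkeeping is routine once the proof of Theorem \ref{theorem classification} is in hand.
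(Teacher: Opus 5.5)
\textbf{Gap in the forward direction.} You sort the branches of the proof of Theorem~\ref{theorem classification} into three verdicts. That proof, however, says it ``freely use[s]'' Proposition~\ref{proposition index inequality}, and several of its ``not $+$-regular'' conclusions come from pushing a case of Lemma~\ref{lemma classification}~(2) to larger exponents via that proposition. Proposition~\ref{proposition index inequality} transports failure of $+$-regularity (or BCM-regularity). It does not transport failure of perfectoid purity.

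Concretely, Lemma~\ref{lemma classification}~(2)(iii) covers only $(3,3,4,d_3,\dots)$ with $d_3\ge 8$, (2)(iv) covers only $(3,3,7,7,16)$, and (2)(v) covers only $d_4=16$. So tuples such as $(3,3,5,8,8)$, $(3,3,8,8,8)$, $(3,3,7,7,17)$ or $(3,3,7,7,17,32)$ are shown there only to be non-$+$-regular. Your input (a) does not remove them, since $\frac23+\frac15+\frac14>1$ and $\frac23+\frac27+\frac1{17}>1$. It does suffice when $n=3$, e.g.\ $\frac23+\frac15+\frac18<1$. As written, nothing rules out such a tuple being perfectoid pure but not $+$-regular, so your ``only if'' direction is incomplete.

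\textbf{How to close it.} The missing ingredient is that perfectoid purity is monotone in the exponents. This is easy to supply. The map $\sigma'$ in the proof of Proposition~\ref{proposition index inequality} shows that $f_{\mathbf a}^t\in\n S^+$ implies $f_{\mathbf b}^t\in\n S^+$ whenever $\mathbf a\le\mathbf b$ componentwise. Hence $\ppt(f_{\mathbf b})\le\ppt(f_{\mathbf a})$. Since perfectoid purity of $R_{\mathbf d}$ is equivalent to $\ppt(f_{\mathbf d})=1$ by \cite[Proposition 2.8]{BJPRMSS25}, non-perfectoid-purity propagates upward. Alternatively, you could compute the splitting-order sequences of these families directly.

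Once you use this equivalence, though, the paper's route is much shorter. The condition $\ppt(f_{\mathbf d})=1$ holds iff $S\xrightarrow{f_{\mathbf d}^{(m-1)/m}}S^+$ is pure for every $m$. By Remark~\ref{remark +-regular and ppt}, this holds iff $S\llbracket x_{n+1}\rrbracket/(f_{\mathbf d}+x_{n+1}^m)$ is $+$-regular for every $m$. Both directions are then read off the list in Theorem~\ref{theorem classification}, with no inputs (a), (b) and no re-running of branches.

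\textbf{Smaller points.}
\begin{itemize}
\item Under Definition~\ref{def of splitting-order sequences} one has $I(2)=\overline{S}$, so $s_1\le 1$ always. Your $d_1\ge 4$ computation actually gives $s_1=1$ and $s_2=2$. Theorem~\ref{theorem splitting-order sequences and ppt}~(2) with $e=2$ then gives $\ppt\le 3/4$, which still suffices.
\item For $n=1$ the characteristic-zero fibres are singular plane curves, so ``log canonical'' is awkward there. Corollary~\ref{binomial ppt}, which gives $\ppt=\frac1{d_0}+\frac1{d_1}$, settles that case directly.
\item In the converse, the detour through splitting-order sequences and Yoshikawa's criterion is unnecessary. $\ppt=1$ already gives perfectoid purity by \cite[Proposition 2.8]{BJPRMSS25}.
\end{itemize}
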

\begin{proof}
	Let $S:=\Z_2\llbracket x_0,\dots,x_n\rrbracket$ and $f:=x_0^{d_0}+\dots+x_n^{d_n}$. Then $R$ is perfectoid pure if and only if $\operatorname{ppt}(S,f)=1$ by \cite[Proposition 2.8]{BJPRMSS25}. This holds if and only if $S\xrightarrow{f^{(m-1)/m}} S^+$ splits for any $m\ge 1$. Moreover, this is equivalent to the $+$-regularity of the ring $S\llbracket x_{n+1}\rrbracket/(f+x_{n+1}^m)$ by Remark \ref{remark +-regular and ppt}. Hence, the result follows from Theorem \ref{theorem classification}.
\end{proof}

\bibliography{mixed.bib}
\bibliographystyle{alpha}

\bigskip

\end{document}